\theoremstyle{plain}\newtheorem{Theorem}{Theorem}[section]
\theoremstyle{plain}\newtheorem{Corollary}[Theorem]{Corollary}
\theoremstyle{plain}\newtheorem{Lemma}[Theorem]{Lemma}
\theoremstyle{plain}\newtheorem{Definition}[Theorem]{Definition}
\theoremstyle{plain}\newtheorem{Proposition}[Theorem]{Proposition}
\theoremstyle{plain}
\theoremstyle{plain}\newtheorem{Conjecture}[Theorem]{Conjecture}
\theoremstyle{plain}\newtheorem*{Theorem*}{Theorem}
\theoremstyle{plain}\newtheorem{Question}[Theorem]{Question}
\theoremstyle{remark}\newtheorem{remark}[Theorem]{Remark}
\theoremstyle{remark}
\theoremstyle{plain}
\newtheorem*{rep@theorem}{\rep@title}
\newcommand{\newreptheorem}[2]{
\newenvironment{rep#1}[1]{
 \def\rep@title{#2 \ref{##1}}
 \begin{rep@theorem}}
 {\end{rep@theorem}}}
\numberwithin{equation}{section}
\newcommand\blfootnote[1]{%
  \begingroup
  \renewcommand\thefootnote{}\footnote{#1}%
  \addtocounter{footnote}{-1}%
  \endgroup
}
\DeclareMathOperator{\rank}{rank}
\DeclareMathOperator{\II}{I}
\DeclareMathOperator{\ad}{ad}
\DeclareMathOperator{\AHI}{AHI}
\DeclareMathOperator{\AKh}{AKh}
\DeclareMathOperator{\muu}{\mu^{orb}}
\DeclareMathOperator{\SU}{SU}
\DeclareMathOperator{\SL}{SL}
\DeclareMathOperator{\SO}{SO}
\DeclareMathOperator{\Hom}{Hom}
\DeclareMathOperator{\pt}{pt}
\DeclareMathOperator{\id}{id}
\DeclareMathOperator{\lk}{lk}
\DeclareMathOperator{\HFK}{\widehat{HFK}}
\DeclareMathOperator{\KHI}{KHI}
\DeclareMathOperator{\trace}{trace}
\DeclareMathOperator{\len}{length}
\newcommand{\bC}{\mathbb{C}}
\newcommand{\bR}{\mathbb{R}}
\newcommand{\bZ}{\mathbb{Z}}
\newcommand{\bfi}{\mathbf{i}}
\newcommand{\bfj}{\mathbf{j}}
\newcommand{\bfk}{\mathbf{k}}
\DeclarePairedDelimiter{\ceil}{\lceil}{\rceil}
\author{Yi Xie}
\address{Beijing International Center for Mathematical Research, Peking University, Beijing 100871, China}
\email{yixie@pku.edu.cn}
\author{Boyu Zhang}
\address{Department of Mathematics, Princeton University, New Jersey 08544, USA}
\email{bz@math.princeton.edu}
\title{On meridian-traceless $\SU(2)$--representations of link groups }
\begin{document}

\begin{abstract}
Suppose $L$ is a link in $S^3$. We show that $\pi_1(S^3-L)$ admits an irreducible meridian-traceless representation in $\SU(2)$ if and only if $L$ is not the unknot, the Hopf link, or a connected sum of Hopf links. As a corollary, $\pi_1(S^3-L)$ admits an irreducible representation in $\SU(2)$ if and only if $L$ is neither the unknot nor the Hopf link. This result generalizes a theorem of Kronheimer and Mrowka to the case of links. 
\end{abstract}

\maketitle

\section{Introduction}

\blfootnote{
The first author was supported by 
National Key R\&D Program of China SQ2020YFA0712801 and NSFC 12071005.}

The geometric and topological properties of a $3$--manifold can be strongly reflected by its fundamental group.
 One approach to understand the fundamental group is to study its representations into 
 linear groups such as
$\SU(2)$ and $\SL(2,\bC)$. Many topological invariants are defined by considering such representations, such as the Casson invariant  \cite{AM_Casson}, the Casson-Lin invariant  \cite{lin1992knot}, and the A-polynomial \cite{CCGLS_A-polynomial}.

The following is a natural question about the fundamental groups of  3-manifolds and is a question on Kirby's problem list \cite{Kirby}.
\begin{Question}[{\cite[3.105(A)]{Kirby}}]\label{question_main}
Suppose $Y$ is an integer homology 3-sphere with $\pi_1(Y)\neq 0$, does $\pi_1(Y)$ always admit an irreducible representation in $\SU(2)$?
\end{Question}

\begin{remark}
There exist closed 3-manifolds $Y$ which are not integer homology spheres such that $\pi_1(Y)$ admits no irreducible  representation in $\SU(2)$, see \cite[Theorem 1.6]{SZ_SU2_abelian} or the remark below \cite[3.105(A)]{Kirby}.
\end{remark}

Positive answers to Question \ref{question_main} have been obtained for
many different families of integer homology 3-spheres: the Dehn surgeries of non-trivial knots  with certain conditions on 
the surgery slopes \cite{KM_property_p,KM_Dehn_surgery,Lin_cyclic,BS_L_space_surgery}, the branched double covers of $S^3$ over knots with determinant $1$
\cite{Zentner_SU2_simple}, the splicing
of two knot complements \cite{Zentner_sl2}, 
Stein fillable $3$--manifolds \cite{BS_Stein_filling}, and 
toroidal integer homology
$3$--spheres \cite{LPZ_toroidal,BS_splicing}. A complete answer to 
the existence problem of irreducible $\SU(2)$--representations is obtained for   
non-hyperbolic geometric closed
 $3$--manifolds \cite{SZ_SU2_abelian}. There are also several related results for 4-manifolds (see \cite{daemi_ribbon,Taniguchi_seifert,Daemi_homology_cobordism,daemi2020chern}). 

It is also natural to ask about non-closed $3$--manifolds. The following question is on Kirby's problem list \cite[Problem 1.86]{Kirby} and is attributed to Cooper.
\begin{Question}[Cooper]\label{question_knot}
Suppose $K$ is a non-trivial knot in $S^3$, is it true that $\pi_1(S^3-K)$ always has an irreducible representation in $\SL(2,\bC)$?
\end{Question}

Question \ref{question_knot} was positively answered by Kronheimer and Mrowka \cite{KM_property_p} as a corollary of their proof for the Property P conjecture. 
Moreover, in \cite{KM:suture},  Kronheimer and Mrowka proved the following stronger result.

\begin{Theorem}[{\cite[Corollary 7.17]{KM:suture}}]
\label{thm_KM_R}
If $K$ is a non-trivial knot in $S^3$, then $\pi_1(S^3-K)$ admits an  
irreducible representation in $\SU(2)$ such that the meridian of $K$ is mapped to a traceless element in $\SU(2)$. 
\end{Theorem}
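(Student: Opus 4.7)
The plan is to prove the theorem by contradiction using Kronheimer--Mrowka's reduced singular instanton knot homology $I^{\natural}(K)$. This invariant is the Morse homology of a Chern--Simons functional on a space of $\SU(2)$ orbifold connections on $S^{3}$ with cone angle $\pi$ along $K$, or equivalently $\SU(2)$ connections on $S^{3}\setminus K$ whose meridional holonomy is traceless. The essential feature for the argument is that, after a sufficiently small holonomy perturbation, the generators of the underlying chain complex split, up to gauge, into an irreducible part in bijection with conjugacy classes of irreducible meridian-traceless $\SU(2)$ representations of $\pi_{1}(S^{3}\setminus K)$, together with a reducible part contributing exactly one $\bZ$ summand (as in the computation $I^{\natural}(U)\cong\bZ$ for the unknot).

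The first ingredient I would invoke is the unknot-detection property: for any non-trivial knot $K$ in $S^{3}$, $\operatorname{rank}_{\bQ} I^{\natural}(K)\geq 2$. This was established by Kronheimer--Mrowka by identifying a suitable version of $I^{\natural}(K)$ with the sutured instanton homology of the knot exterior and then applying the sutured manifold decomposition machinery to produce additional generators whenever the Seifert genus of $K$ is positive. It can alternatively be deduced from the spectral sequence whose $E_{2}$-page is reduced Khovanov homology, converging to $I^{\natural}(K)$, together with the fact that reduced Khovanov homology detects the unknot.

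The concluding step is the argument by contradiction. Suppose $\pi_{1}(S^{3}\setminus K)$ admits no irreducible meridian-traceless $\SU(2)$ representation. Then one may choose the holonomy perturbation small enough that the only critical points contributing to the instanton chain complex arise from the reducible locus. Using Kronheimer--Mrowka's local model at the reducible critical set, the total contribution to $I^{\natural}(K)$ is at most rank one over $\bQ$. Combined with the lower bound $\operatorname{rank}_{\bQ} I^{\natural}(K)\geq 2$ from the previous paragraph, this yields the desired contradiction and forces the existence of an irreducible meridian-traceless representation.

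The main obstacle is the reducible-locus analysis: one must verify both that a sufficiently small holonomy perturbation creates no spurious irreducible critical points within a neighbourhood of the reducible locus, and that the local Morse--Bott model contributes exactly rank one (rather than, for instance, rank two due to grading shifts or transversality failures). The first half relies on the rigidity of the abelian representation variety under the meridian-traceless condition, and the second on a Fintushel--Stern-type index computation in the orbifold setting. Both are delicate gauge-theoretic inputs, but they are established within Kronheimer--Mrowka's framework, after which the theorem follows at once from the two bounds.
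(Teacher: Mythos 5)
The paper does not actually prove Theorem \ref{thm_KM_R}; it quotes it from \cite{KM:suture}. The closest internal comparison is the link-case machinery of Section \ref{sec_singular_instanton} (Lemma \ref{lem_R=R^natural} and Propositions \ref{prop_degenerate_rep}--\ref{prop_abelian_rep_imply_minimal_I_Z}), which specializes to knots to give exactly the argument you are attempting: no irreducible meridian-traceless representations $\Rightarrow$ the Floer complex of $\II^\natural(K,p)$ has a single generator $\Rightarrow$ $\II^\natural(K,p)\cong\bZ$, contradicting $\dim\II^\natural(K,p)=\dim\KHI(K)\ge 2$ for a nontrivial knot. So your overall strategy is the right one. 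The problem is the step you yourself flag as ``the main obstacle'': your proposed resolution (choose a small holonomy perturbation and argue the reducible locus contributes rank at most one via a local Morse--Bott model) does not work as stated, and is not how the difficulty is actually resolved. If the abelian critical point were degenerate, a small perturbation could split it into many critical points whose net homological contribution exceeds one; indeed Proposition \ref{prop_binary_dihedral_rep} shows that when $\det(L)=0$ the degenerate situation really does come with infinitely many extra (irreducible) representations. The correct resolution is to prove that no perturbation is needed: the unique abelian meridian-traceless representation gives a \emph{nondegenerate} critical point of the unperturbed Chern--Simons functional precisely when $\det(K)=|\Delta_K(-1)|\neq 0$ (Proposition \ref{prop_degenerate_rep}, via identifying the kernel of the Hessian with $H^1(S^3-K;\widetilde{\bR})^{\oplus 2}\cong H_1(\Sigma(K);\bR)^{\oplus 2}$), and for a knot the determinant is always odd, hence nonzero. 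Your proposal never identifies this Alexander-polynomial input, and without it the upper bound of rank one is unjustified.

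Two smaller points. First, your ``alternative'' derivation of the lower bound $\rank\II^\natural(K)\ge 2$ from the Khovanov spectral sequence is backwards: the spectral sequence from $\Khr(K)$ converging to $\II^\natural(K)$ gives $\dim\II^\natural(K)\le\dim\Khr(K)$, so unknot-detection for reduced Khovanov homology cannot yield a lower bound on $\II^\natural$ (in fact the logical dependence in \cite{KM:Kh-unknot} runs the other way). The sutured/genus-detection route ($\KHI(K,\pm g)\neq 0$ with $g\ge 1$) is the one that works. Second, in the $\natural$ construction there are no reducible connections in the configuration space at all: the arc $\omega$ with holonomy $-\id$ forces every critical point to be irreducible as a representation of $\pi_1(S^3-(K\cup m_p\cup\omega))$, and ``abelian'' refers only to the restriction to $\pi_1(S^3-K)$. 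This is why no equivariant or Morse--Bott analysis of a genuinely reducible stratum is needed; what is needed is exactly the nondegeneracy statement above.
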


From now on, we will call the fundamental groups of knot complements as \emph{knot groups}, and call the fundamental groups of link complements as \emph{link groups}.\footnote{In the study of link homotopies, the meaning of the terminology \emph{link group} is different from ours (see \cite{Milnor_link_group}). } We say that an $\SU(2)$--representation of a knot group or a link group is \emph{meridian-traceless},
if the meridian of every component is mapped to a traceless element in $\SU(2)$ (see Definition \ref{def_meridian_traceless}). 

This paper generalizes Theorem \ref{thm_KM_R} to the case of links. The main result is the following theorem.

\begin{Theorem}\label{thm_main}
Suppose $L\subset S^3$ is a link. Then
$\pi_1(S^3-L)$ admits an irreducible meridian-traceless representation in $\SU(2)$ 
if and only if $L$
is not the unknot, the Hopf link,  or a connected sum of Hopf links.
\end{Theorem}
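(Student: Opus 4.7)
The plan is to prove the two directions separately. The \emph{only if} direction is elementary: the unknot and the Hopf link have abelian fundamental groups ($\bZ$ and $\bZ^2$ respectively), so every $\SU(2)$--representation is reducible. For a connected sum $L=L_1\#\cdots\# L_n$ of Hopf links, van Kampen's theorem presents $\pi_1(S^3-L)$ as an iterated amalgamated product of the factor groups $\pi_1(S^3-L_i)\cong\bZ^2$ along meridian subgroups. Given any meridian-traceless representation $\rho$, I would argue by induction on $n$ that all meridians of $L$ map into a single maximal torus of $\SU(2)$. The key observation is that two traceless elements of $\SU(2)$ commute if and only if they are $\pm$ of each other, so the centralizer of a traceless element in $\SU(2)$ is a unique maximal torus; within each Hopf link factor the two meridian images commute and hence lie in a common torus, and the shared meridian at each amalgamation forces consecutive tori to coincide, so $\rho$ has image in a single torus and is reducible.

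The \emph{if} direction is the main content, and I would approach it using the singular instanton Floer homology machinery of Kronheimer--Mrowka that underlies Theorem~\ref{thm_KM_R}. One associates to $L$ a singular instanton Floer group $I^\natural(L)$ whose generators, under generic holonomy perturbations, correspond to meridian-traceless flat $\SU(2)$--connections on $S^3\setminus L$, i.e.\ to meridian-traceless $\SU(2)$--representations of $\pi_1(S^3-L)$. If there were no irreducible such representation, every generator would be reducible and hence classified by sign data inside a single maximal torus, giving an explicit upper bound on the rank of $I^\natural(L)$ in terms of the number of components $|L|$. The plan is then to show that $I^\natural(L)$ strictly exceeds this reducible bound whenever $L$ lies outside the exceptional family. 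Natural tools are the spectral sequence from reduced Khovanov homology $\Khr(L)\Rightarrow I^\natural(L)$ and an unoriented skein exact triangle, which together can propagate a lower bound from small base cases by induction on the crossing number.

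The main obstacle is the sharpness of the dichotomy. For ``generic'' links, producing an irreducible meridian-traceless representation should be possible by extending Theorem~\ref{thm_KM_R} along a non-trivial knot summand or a suitable non-trivial sublink. The delicate cases are configurations close to connected sums of Hopf links, where the Floer-theoretic inequality has to be tight. I expect this to require a connected sum formula for $I^\natural$ that controls how reducible and irreducible parts interact under $\#$, together with a base-case computation showing that the Hopf link saturates the reducible bound while any ``non-Hopf'' prime summand strictly exceeds it; an inductive reduction on the prime decomposition of $L$ would then conclude. Pinning down this sharp behavior at the connected-sum boundary is where I would expect the bulk of the technical work to lie.
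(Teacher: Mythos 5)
Your \emph{only if} direction is correct and is essentially the paper's argument: two commuting traceless elements of $\SU(2)$ are $\pm$ of each other, and the connectivity of the chain of Hopf summands forces all meridian images into one maximal torus (the paper phrases this via the Wirtinger presentation and the connectedness of the linking graph, but the content is the same).

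The \emph{if} direction has a genuine gap, and it sits exactly where you locate "the bulk of the technical work." Your plan is to show that $\dim I^\natural(L)$ strictly exceeds the reducible count $2^{|L|-1}$ for every $L$ outside the exceptional family, by propagating lower bounds (Khovanov spectral sequence, skein triangles, a connected-sum formula). This cannot be carried out with current tools: the paper itself is unable to prove such a strict inequality for the link L8n8, whose linking graph is a $4$--cycle and which is not a connected sum of Hopf links; this is left open as Conjecture \ref{conj_l8n8}. (Proposition \ref{prop_I>Alexander}, the Alexander-polynomial lower bound that drives the paper's Floer-theoretic estimates, gives nothing nontrivial for L8n8.) So a proof that relies solely on Floer-homology lower bounds must fail at this case, and the delicate "sharpness at the connected-sum boundary" you anticipate is not where the actual difficulty lies.

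The missing idea is a second, entirely non-Floer-theoretic source of irreducible meridian-traceless representations: Lemma \ref{lem_existence_ijk_rep} constructs one explicitly from a suitable non-constant map of the components to $\{\bfi,\bfj,\bfk\}$ compatible with the parities of the linking numbers (e.g.\ a $4$--cycle linking graph always admits such a coloring, which is how L8n8-type configurations are handled without computing $I^\natural$). The proof then runs as a dichotomy on the linking graph (Proposition \ref{prop_graph}): either the graph is a tree, or it contains a cycle of length $\neq 4$ or a copy of $G_0$ (these cases are excluded for minimal-$I^\natural$ links by pinning down the isotopy type via Seifert-surface and excision arguments and then applying the Alexander bound), or it admits the coloring, in which case an irreducible representation exists outright. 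Two smaller points: the upper bound "number of reducible generators" requires knowing the abelian critical points are nondegenerate, which the paper must establish via $\det(L)\neq 0$ (Propositions \ref{prop_degenerate_rep} and \ref{prop_binary_dihedral_rep}) --- a generic perturbation destroys the identification of generators with representations; and the lower bound $\dim I^\natural \ge 2^{|L|-1}$ comes from a local-coefficient/excision argument on sublinks rather than from Khovanov homology.
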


As a corollary, we obtain the following result about general $\SU(2)$--representations (without the meridian-traceless condition).

\begin{Corollary}\label{main_cor}
Suppose $L\subset S^3$ is a link. Then 
$\pi_1(S^3-L)$ admits an irreducible representation  in $\SU(2)$ 
if and only if $L$
is neither the unknot nor the Hopf link.
\end{Corollary}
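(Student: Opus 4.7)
The plan is to split the corollary into two directions. One direction is easy: if $L$ is the unknot then $\pi_1(S^3-L) \cong \bZ$, while if $L$ is the Hopf link then $S^3-L$ deformation retracts onto a torus, so $\pi_1(S^3-L) \cong \bZ^2$. In both cases $\pi_1$ is abelian, and since commuting elements of $\SU(2)$ admit a common eigenvector, every $\SU(2)$-representation is reducible.

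For the other direction, assume $L$ is neither the unknot nor the Hopf link. If $L$ is also not a connected sum of Hopf links, Theorem \ref{thm_main} supplies an irreducible meridian-traceless representation, which is in particular irreducible. So the only remaining case is $L = H_1 \# H_2 \# \cdots \# H_n$ with $n \geq 2$, and I would produce an irreducible representation by hand.

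In that case, iterated van Kampen (each connect-sum operation contributing an amalgamation of link groups along the $\bZ$ generated by the common meridian) yields a presentation
\[
\pi_1(S^3 - L) \;=\; \langle\, x_1, \ldots, x_{n+1} \,\mid\, [x_{a_i}, x_{b_i}] = 1,\ i = 1, \ldots, n \,\rangle,
\]
where the $x_k$ are the meridians of the $n+1$ components of $L$ and the $i$-th relation records the commutation of the two meridians coming from the Hopf summand $H_i$. The graph $T$ on $\{x_1, \ldots, x_{n+1}\}$ with one edge per relation has $n$ edges and is easily seen (by induction on the connect-sum tree) to be connected, hence a tree. For $n \geq 2$ it has at least three vertices, so some pair $u, v$ sits at graph distance at least $2$, i.e., is non-adjacent. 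Define $\rho : \pi_1(S^3 - L) \to \SU(2)$ by $\rho(u) = \bfi$, $\rho(v) = \bfj$, and $\rho(x_k) = 1$ otherwise. Each defining commutator involves at most one of $u, v$ and is thus trivially satisfied, and the image contains the non-commuting elements $\bfi$ and $\bfj$, so $\rho$ is irreducible.

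The main technical point in this plan is verifying that $T$ really is a tree, so that no hidden relation spoils the explicit assignment; once this structural fact is in hand, the representation construction is immediate.
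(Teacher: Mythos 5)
Your proof is correct and follows the same overall structure as the paper's: abelianness of $\pi_1$ disposes of the unknot and the Hopf link, Theorem \ref{thm_main} handles every link that is not a connected sum of Hopf links, and the remaining case is settled by locating two components that are non-adjacent in the linking tree. The only difference is in how that last case is finished: the paper notes that two such components form a $2$-component unlink, whose complement has free fundamental group, and composes an irreducible representation of $F_2$ with the surjection $\pi_1(S^3-L)\twoheadrightarrow \pi_1(S^3-K_1-K_2)$ induced by inclusion, whereas you write down an explicit representation from the right-angled-Artin presentation of the connected-sum group obtained via van Kampen --- both are valid, though the paper's route avoids having to justify that presentation.
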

\begin{remark}
It is already known that the only links with abelian link groups are the unknot and the Hopf link (see \cite[Section 6.3]{Kawauchi_survey}).  Corollary \ref{main_cor} recovers this result, and also shows that 
every non-abelian link group admits an irreducible $\SU(2)$--representation.
\end{remark}

Assuming the generalized Riemann hypothesis (GRH), Kuperberg proved that
the unknot recognition problem lies in the complexity class {\bf coNP} 
\cite[Theorem 1.1]{Kuperberg_unknot_coNP}. The same result was later proved by Lackenby \cite{Lackenby_unknot_coNP} without assuming the GRH. 
Kuperberg's proof is based on the
 positive answer to Question \ref{question_knot}. 
The argument was later used by Zentner to prove that 
the 3-sphere recognition problem is 
in  {\bf coNP} (assuming the GRH) \cite[Theorem 11.2]{Zentner_sl2}.
Following the same argument, we have 
the following corollary of Theorem \ref{thm_main}.
\begin{Corollary}\label{cor_Hopf_link_NP}
Suppose $L\subset S^3$ is 
given by a link diagram, and assume the generalized Riemann hypothesis. Then the
assertion that $L$ is the unknot, the Hopf link or a connected sum of Hopf links 
is in {\bf coNP}.
\end{Corollary}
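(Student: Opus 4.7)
The plan is to run the Kuperberg--Zentner coNP argument \cite{Kuperberg_unknot_coNP, Zentner_sl2} with Theorem~\ref{thm_main} as the existence input. Given a diagram $D$ with $n$ crossings presenting $L$, the Wirtinger presentation writes $G := \pi_1(S^3-L)$ with $O(n)$ meridian generators $x_1,\dots,x_m$ and $O(n)$ conjugation relations of bounded length, producing a description of size polynomial in $|D|$. The NO instances of the decision problem are exactly the links $L$ which are not the unknot, the Hopf link, or a connected sum of Hopf links, and Theorem~\ref{thm_main} furnishes, for every such link, an irreducible representation $\rho\colon G \to \SU(2) \subset \SL(2,\bC)$ with $\trace(\rho(x_i)) = 0$ for every meridian $x_i$. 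This $\rho$ will serve as the coNP certificate.

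To see that $\rho$ can be encoded in polynomial bit length, I would pass to the $\SL(2,\bC)$-character variety $X(G)$ and its closed subvariety $V \subset X(G)$ where all meridian trace coordinates vanish. Both are cut out over $\bQ$ by the Wirtinger relations and the trace-zero conditions, hence by equations of polynomially bounded degree and height. An effective Nullstellensatz then shows that every $\bC$-point of $V$ is $\SL(2,\bC)$-conjugate to a representation whose matrix entries lie in a number field $K$ with $[K:\bQ]$ and logarithmic height polynomial in $|D|$. The witness consists of a defining polynomial for $K$ over $\bQ$ together with matrices $A_i \in \SL(2,K)$ representing the $\rho(x_i)$.

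Polynomial-time verification now proceeds by (a) checking each Wirtinger relation as a matrix identity in $\SL(2,K)$, (b) checking $\trace(A_i)=0$, and (c) certifying irreducibility via the non-vanishing of a single polynomial trace invariant, for instance $\trace([A_i,A_j])-2$ for some pair $i,j$, which is the standard obstruction to a common eigenvector. Following Kuperberg, all three checks are carried out after reducing modulo a prime $p$ of polynomial size, with GRH invoked via Miller's deterministic primality test to certify $p$ in polynomial time; a Chebotarev-type argument under GRH guarantees that some such $p$ preserves the three identities and inequalities above.

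The main obstacle, and the only place where this paper's contribution is essential, is the existence of the witness representation for the enlarged exceptional family $\{$unknot, Hopf link, connected sums of Hopf links$\}$; that is precisely Theorem~\ref{thm_main}. Everything downstream of that input is a verbatim translation of the arguments of \cite{Kuperberg_unknot_coNP} and \cite{Zentner_sl2} from knots, respectively integer homology spheres, to the link setting, so no new analytic or gauge-theoretic input beyond Theorem~\ref{thm_main} is needed.
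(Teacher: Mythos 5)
Your overall strategy (Wirtinger presentation, Theorem~\ref{thm_main} as the existence input, Kuperberg's GRH descent to a small prime) is the same as the paper's, but the proposal has a genuine gap: you only argue \emph{completeness} of the certificate, never \emph{soundness}. For the complement language to be in {\bf NP} you must also show that no link in the exceptional family $\{$unknot, Hopf link, connected sums of Hopf links$\}$ admits an accepting certificate; otherwise the verifier proves nothing. This does not follow from Theorem~\ref{thm_main}, which only rules out irreducible meridian-traceless representations into $\SU(2)$, whereas your certificate lives in $\SL(2,K)$ and, after reduction, in $\SL(2,\bZ/p)$. The paper supplies the missing step explicitly: two commuting traceless elements $a,b\in\SL(2,\bZ/p)$ satisfy $a=\pm b$ (diagonalize in the algebraic closure), and the link group of a connected sum of Hopf links is generated by meridians subject to commutation relations along a connected linking graph, so every meridian-traceless representation into $\SL(2,\bZ/p)$ is abelian. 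Without this (or the analogous statement over your number field $K$), the argument is incomplete.

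A second problem is the encoding of the witness. You claim, via an ``effective Nullstellensatz,'' that every $\bC$-point of the representation variety is conjugate to one with entries in a number field $K$ with $[K:\bQ]$ polynomial in $|D|$. Effective Nullstellensatz does not give this; the generic bound on the degree of an algebraic point of a variety cut out by degree-$d$ equations in $N$ variables is of B\'ezout type, $d^N$, which is exponential in the number of Wirtinger generators. This is precisely why Kuperberg's argument (\cite[Theorem 3.3]{Kuperberg_unknot_coNP}), which the paper follows, never exhibits a characteristic-zero algebraic point: under GRH it passes directly from solvability over $\bC$ to solvability over $\bZ/p$ for a prime with $\log p$ polynomially bounded, and the certificate is the pair $(p,\rho_p)$ alone. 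Relatedly, your irreducibility check is a polynomial \emph{inequality}; the paper handles this by the Rabinowitsch trick (\cite[Theorem 3.4]{Kuperberg_unknot_coNP}), folding the non-vanishing condition into the polynomial system before the descent, rather than by a separate Chebotarev argument after the fact.
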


\vspace{\baselineskip}

The proof of the main theorem uses  
the singular instanton Floer homology theory introduced by Kronheimer and Mrowka 
\cite{KM:Kh-unknot,KM:YAFT}. 
Let $L$ be a link, if the link group of $L$ does not admit any irreducible meridian-traceless $\SU(2)$--representations,  we will show that the singular instanton Floer homology group of $L$ has the minimal possible rank (Propositions \ref{prop_removing_component_rank_change} and \ref{prop_abelian_rep_imply_minimal_I_Z}). We will then prove several topological properties for links with minimal instanton Floer homology using the annular instanton Floer homology \cite{AHI} and the excision properties of  singular instanton Floer homology \cite{XZ:excision}. From these properties, we will show that either (1) $L$ is the unknot, the Hopf link, or a connected sum of Hopf links, or (2) the link group of $L$ has an irreducible meridian-traceless $\SU(2)$--representation. Since Case (2) contradicts the assumption, we conclude that $L$ is the unknot,  the Hopf link, or a connected sum of Hopf links.

This paper is organized as follows.
In Section \ref{sec_main_thm}, we give a proof of Theorem \ref{thm_main}  assuming several results that will be established in the later sections. The rest of the paper is devoted to establishing these results.  Section \ref{sec_singular_instanton} proves several general properties of instanton Floer homology. Section \ref{sec_top_properties} proves several topological properties for links with minimal $\II^\natural$ (see Definition \ref{def_minimal_I_natural}). Sections \ref{sec_cycles} and \ref{sec_eliminate_G0} then apply the topological properties from Section \ref{sec_top_properties} to show that the linking graphs (see Definition \ref{def_linking_graph}) of links with minimal $\II^\natural$ satisfy certain combinatorial properties.  Section \ref{sec_combinatorics} proves a purely combinatorial result that was used to finish the proof in Section \ref{sec_main_thm}. The appendix contains some technical computations that are used in the earlier sections.

Some of the results in Sections \ref{sec_top_properties} and \ref{sec_cycles} are similar to the ones in \cite{XZ:forest}.
However, 
unlike Khovanov homology (which is the object of study in \cite{XZ:forest}), the instanton Floer homology groups are usually difficult to compute. This  created additional difficulties that need to be resolved by Sections \ref{sec_eliminate_G0} and \ref{sec_combinatorics}. More discussions about this are given in Remark \ref{rmk_after_conj}.

\section*{Acknowledgements}
We would like to thank Fan Wei for helpful conversations about graph theory, especially for constructing the example in Remark \ref{rmk_graph}.

\section{The main result}
\label{sec_main_thm}
Suppose $G$ is a group; a representation $\rho:G\to \SU(2)$  is called \emph{irreducible} if the image of $\rho$ is non-abelian. Otherwise, $\rho$ is called \emph{reducible} or \emph{abelian}.

Notice that the set of traceless matrices is a conjugacy class in $\SU(2)$.
Let $M$ be a path-connected space, $\rho:\pi_1(M)\to \SU(2)$ be a representation, and $\gamma$ be an oriented closed loop in $M$ \emph{without} a base point. Then $\gamma$ defines an element $[\gamma]\in \pi_1(M)$ up to conjugacy. We say that $\rho$ maps $\gamma$ to a traceless matrix if $\rho([\gamma])$ is traceless.

We introduce the following definition. 
\begin{Definition}
\label{def_meridian_traceless}
	Let $L\subset S^3$ be a link.  A representation $\rho:\pi_1(S^3-L)\to \SU(2)$ is called \emph{meridian-traceless} if it maps all meridians of $L$ to traceless matrices.
\end{Definition} 

The main result of this article is the following theorem.

\begin{repTheorem}{thm_main}
Suppose $L\subset S^3$ is a link. Then
$\pi_1(S^3-L)$ admits an irreducible meridian-traceless $\SU(2)$--representation 
if and only if $L$
is not the unknot, the Hopf link,  or a connected sum of Hopf links.
\end{repTheorem}

Theorem \ref{thm_main} has the following immediate corollary.
\begin{repCorollary}{main_cor}
Suppose $L\subset S^3$ is a link. Then 
$\pi_1(S^3-L)$ admits an irreducible $\SU(2)$--representation 
if and only if $L$
is neither the unknot nor the Hopf link.
\end{repCorollary}
\begin{proof}[Proof of Corollary \ref{main_cor} assuming Theorem \ref{thm_main}]
	If $L$ is the unknot or the Hopf link, then $\pi_1(S^3-L)$ is abelian, so it does not admit any irreducible $\SU(2)$--representation.
	
	If $L$ is a connected sum of at least two Hopf links, then there exist two components $K_1,K_2$ of $L$ such that $K_1\cup K_2$ is an unlink. Therefore, $\pi_1(S^3-K_1-K_2)$ is the free group generated by two elements and hence has an irreducible $\SU(2)$--representation. Since the embedding of $S^3-L$ into $S^3-K_1-K_2$ induces a surjection on $\pi_1$, the group $\pi_1(S^3-L)$ also has an irreducible $\SU(2)$--representation.
	
	If $L$ is not the unknot, the Hopf link, or a connected sum of Hopf links, then by Theorem \ref{thm_main}, the fundamental group $\pi_1(S^3-L)$ has a meridian-traceless irreducible $\SU(2)$--representation.
\end{proof}

The next definition constructs a graph from a link using the linking numbers. We will use $\lk(K,K')$ to denote the linking number of two knots $K$ and $K'$ in $S^3$. Notice that the parity of the linking number is independent of the orientations, so it makes sense to  refer to the parity of $\lk(K,K')$ without specifying the orientations. 
\begin{Definition}\label{def_linking_graph}
	Let $L=K_1\cup K_2\cup \cdots \cup K_n$ be an $n$--component link. Define the
	\emph{linking graph} of $L$ to be the finite simple graph with $n$ vertices $v_1,\cdots, v_n$, such that for all $i\neq j$, the vertices $v_i$ and $v_j$ are connected by an edge if and only if $\lk(K_i,K_j)$ is odd.
\end{Definition}

We will establish a relation between linking graphs and meridian-traceless $\SU(2)$--representations in Lemma \ref{lem_existence_ijk_rep} below. 
In the following, we will view $\SU(2)$ as the unit sphere in the space of quaternions, and use $\bfi,\bfj,\bfk\in \SU(2)$ to denote the matrices given by the respective quaternions. In other words, we have
$$
\bfi:=\begin{pmatrix} i & 0 \\ 0 & -i \end{pmatrix},\quad
\bfj:=\begin{pmatrix} 0 & 1 \\ -1 & 0 \end{pmatrix},\quad
\bfk:=\begin{pmatrix} 0 & i \\ i & 0 \end{pmatrix}.
$$

\begin{Lemma}\label{lem_existence_ijk_rep}
Let $L$ be a link in $S^3$ and let $G$ be its linking graph. Let $V$ be the vertex set of $G$.  Suppose there exists a non-constant 
map $\varphi: V\to \{\bfi,\bfj,\bfk\}$, such that 
for every $v\in V$, the image $\varphi(v)$ is commutative to 
$$
\prod_{\{w\in V|w \text{ is adjacent to }v\}} \varphi(w). 
$$
Then $\pi_1(S^3-L)$ admits an irreducible meridian-traceless $\SU(2)$--representation.
\end{Lemma}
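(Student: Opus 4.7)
The plan is to construct such a representation $\rho$ directly from a Wirtinger presentation. I fix an oriented diagram $D$ of $L$ whose arcs are grouped by the components $K_1,\ldots,K_n$, and to each arc $a$ lying on $K_i$ I would assign
\[
\rho(a)\;=\;\varepsilon_a\,\varphi(v_i)\;\in\;\SU(2),\qquad \varepsilon_a\in\{\pm 1\},
\]
with the signs prescribed as follows: on each component $K_i$, declare one arc to carry $\varepsilon=+1$, then traverse $K_i$ flipping $\varepsilon$ at every undercrossing whose over-strand belongs to a component $K_j$ with $\varphi(v_j)\ne\varphi(v_i)$, and leaving $\varepsilon$ unchanged otherwise. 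Every meridian of $K_i$ would then map to $\pm\varphi(v_i)$, which is traceless; and since $\varphi$ is non-constant, the image of $\rho$ would contain two distinct elements of $\{\bfi,\bfj,\bfk\}$, which anticommute, so $\rho$ would automatically be irreducible.

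I would next check the Wirtinger relation $\rho(c)=\rho(a)\rho(b)\rho(a)^{\pm 1}$ at every crossing, where $a$ is the over-arc and $b,c$ are the incoming and outgoing under-arcs. Writing $\mathbf{x}=\varphi(v_i)$ for the color of the under-component and $\mathbf{y}=\varphi(v_j)$ for the color of the over-component: if $\mathbf{x}=\mathbf{y}$ then $\mathbf{y}\mathbf{x}\mathbf{y}^{\pm 1}=\mathbf{x}$, no sign flip is prescribed, and the relation holds; if $\mathbf{x}\ne\mathbf{y}$ then $\mathbf{y}\mathbf{x}\mathbf{y}^{\pm 1}=-\mathbf{x}$ because distinct basic quaternions anticommute, which is exactly compensated by the prescribed sign flip, independent of the sign of the crossing.

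The main obstacle is well-definedness of the signs: after traversing $K_i$ once, the sign must return to its starting value. This reduces to showing that, for each $i$, the total number of undercrossings of $K_i$ beneath differently-colored over-strands is even. Using the one-sided formula $\lk(K_i,K_j)\equiv n_{ij}\pmod 2$, where $n_{ij}$ counts undercrossings of $K_i$ under $K_j$ in $D$, this parity equals
\[
\sum_{j\ne i,\ \varphi(v_j)\ne\varphi(v_i)}\lk(K_i,K_j)\pmod 2\;\equiv\;\bigl|\{\,w\sim v_i : \varphi(w)\ne\varphi(v_i)\,\}\bigr|\pmod 2,
\]
by the very definition of the linking graph.

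Finally, I would translate the hypothesis on $\varphi$ into exactly this parity statement. Under the projection of the quaternion group $\{\pm 1,\pm\bfi,\pm\bfj,\pm\bfk\}$ onto its abelianization $(\bZ/2)^2$, the product $\prod_{w\sim v_i}\varphi(w)$ passes to an order-independent sum of images, and the centralizer $\{\pm 1,\pm\mathbf{x}\}$ of $\mathbf{x}=\varphi(v_i)$ projects to a single $\bZ/2$-subgroup of $(\bZ/2)^2$. The assumption that $\varphi(v_i)$ commutes with the product says that this sum lies in that subgroup, which is equivalent to the number of neighbors $w\sim v_i$ with $\varphi(w)\ne\varphi(v_i)$ being even---exactly the parity needed for well-definedness, completing the construction.
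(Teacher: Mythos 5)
Your proposal is correct and follows essentially the same route as the paper's proof: assign $\pm\varphi(v_i)$ to the arcs of $K_i$, propagate signs by flipping at undercrossings beneath differently-colored strands, and observe that the commutativity hypothesis is exactly the parity condition (via linking numbers mod~2) needed for the signs to close up around each component. You simply spell out in more detail two steps the paper leaves implicit, namely the one-sided linking-number formula and the translation of the quaternion commutation condition through the abelianization of $Q_8$.
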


\begin{proof}
	Fix a planar diagram for $L$ and fix an orientation.  By the Wirtinger presentation, $\pi_1(S^3-L)$ is generated by the positively-oriented meridians that goes around each arc of the diagram, and every crossing of the diagram gives a relation on the generators. Therefore, to define an $\SU(2)$--representation for  $\pi_1(S^3-L)$, one only needs to assign an element of $\SU(2)$ to each arc such that they are compatible at the crossings. 
	
	Write $L$ as $K_1\cup\cdots\cup K_n$, and let $v_1,\cdots,v_n$ be the corresponding vertices of $G$. 
	Assign each component of $L$ with one of three colors, such that $K_i$ and $K_j$ have the same color if and only if $\varphi(v_i)=\varphi(v_j)$. Then the assumption on the map $\varphi$ can be translated to the following statement about the diagram of $L$: if one travels along a component of $L$, then the number of times this component goes under an arc with a different color is even.
	
	We construct an irreducible meridian-traceless $\SU(2)$--representation for  $\pi_1(S^3-L)$ as follows.  To each arc that belongs to the component $K_i$, we assign $\varphi(v_i)$ or $-\varphi(v_i)$. The compatibility condition then requires that if a component goes under an arc with a different color, then the image of the arc changes sign; if it goes under an arc with the same color, then the sign remains unchanged. The assumption on $\varphi$ then implies that one can choose the signs compatibly and obtain an $\SU(2)$--representation.  By definition, this representation is meridian-traceless. Since $\varphi$ is not constant, it is also irreducible.
\end{proof}

Suppose $L$ is a link with $n$ components and let $p\in L$ be a base point. Let $\II^\natural(L,p)$ be the instanton homology invariant of $L$ introduced by Kronheimer-Mrowka in \cite[Section 4.3]{KM:Kh-unknot}.  We will prove in Proposition \ref{prop_removing_component_rank_change} that $\dim \II^\natural(L,p;\bC)\ge 2^{n-1}$, which motivates the following definition:
\begin{Definition}
\label{def_minimal_I_natural}
Suppose $L$ is a link with $n$ components. We say that $L$ has \emph{minimal $\II^\natural$}, if  $\dim \II^\natural(L,p;\bC)= 2^{n-1}$ 	
	for all base points $p\in L$.
\end{Definition}

The following result will be implied by Proposition \ref{prop_abelian_rep_imply_minimal_I_Z} (see Remark \ref{rmk_no_rep_imply_minimal_I_natural_Z}).

\begin{Proposition}\label{prop_only_abelian_rep_implies_minimal_I}
If $L$ is a link in $S^3$ such that 
$\pi_1(S^3-L)$ does not have any  
irreducible meridian-traceless $\SU(2)$--representations, then $L$
has minimal $\II^\natural$.  
\end{Proposition}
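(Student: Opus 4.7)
The plan is to bound $\dim \II^\natural(L,p;\bC)$ from above using the Morse-theoretic description of singular instanton Floer homology. Recall that Kronheimer--Mrowka compute $\II^\natural(L,p;\bC)$ as the homology of a chain complex whose generators, after a suitable small holonomy perturbation, correspond bijectively to gauge equivalence classes of flat singular $\SU(2)$-connections on $S^3$ with singularity along $L \cup H$, where $H$ is the earring arc attached at $p$, with traceless holonomy around every meridian. By taking holonomy, these critical points correspond to conjugacy classes of representations $\rho : \pi_1(S^3 - (L\cup H)) \to \SU(2)$ sending each meridian of $L$ and each meridian of $H$ to a traceless element of $\SU(2)$, together with the bifurcation constraint at the attaching point of $H$.

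Under the hypothesis that $\pi_1(S^3-L)$ admits no irreducible meridian-traceless $\SU(2)$-representation, any such $\rho$ must restrict to an abelian representation on $\pi_1(S^3-L)$. I would then directly enumerate the conjugacy classes of abelian meridian-traceless representations of an $n$-component link group: after conjugating into a fixed maximal torus, each component's meridian is sent to $\pm\bfi$, giving $2^n$ possibilities; the residual Weyl element (conjugation by $\bfj$) simultaneously flips all the signs, yielding exactly $2^{n-1}$ conjugacy classes. The earring $H$ does not enlarge this count --- its purpose is to break the residual stabilizer that would otherwise obstruct the abelian critical points, so that after a small perturbation each abelian representation contributes precisely one nondegenerate generator.

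Combining this upper bound $\dim \II^\natural(L,p;\bC) \le 2^{n-1}$ with the lower bound from Proposition \ref{prop_removing_component_rank_change}, we get $\dim \II^\natural(L,p;\bC) = 2^{n-1}$, i.e.\ $L$ has minimal $\II^\natural$. The main obstacle lies in the perturbation theoretic step: one must verify that a generic small holonomy perturbation does not create spurious critical points away from the abelian locus, and that each abelian flat connection, which is reducible and a priori degenerate, splits into exactly the number of nondegenerate generators predicted by the enumeration. This is presumably the content of the integral refinement Proposition \ref{prop_abelian_rep_imply_minimal_I_Z}, which would establish the bound on the generators of the chain complex over $\bZ$ and then deduce the dimension count over $\bC$ by universal coefficients.
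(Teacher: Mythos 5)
Your skeleton is the right one --- enumerate the meridian-traceless representations, use them to bound the number of generators of the Floer chain complex from above by $2^{|L|-1}$, and combine with the lower bound of Proposition \ref{prop_removing_component_rank_change} --- and your count of $2^{|L|-1}$ abelian classes is correct. But the step you flag as ``the main obstacle'' is precisely where the content of the proof lives, and your proposed route through it (a generic small holonomy perturbation splitting each abelian flat connection into the right number of generators) is not how the argument can be closed, nor is it what Proposition \ref{prop_abelian_rep_imply_minimal_I_Z} does. If the abelian critical points were degenerate, a perturbation gives you no a priori control on how many generators each one contributes, so the upper bound $2^{|L|-1}$ would simply not follow. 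Deferring this to Proposition \ref{prop_abelian_rep_imply_minimal_I_Z} is also circular, since that proposition is exactly the statement being established here (in its integral form).

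The paper's actual mechanism is that \emph{no perturbation is needed at all}: the hypothesis itself forces the unperturbed Chern--Simons functional to be Morse. Concretely, Lemma \ref{lem_hessian_I} and Proposition \ref{prop_degenerate_rep} identify the kernel of the Hessian at an abelian $\rho$ with $H^1(S^3-L;\widetilde{\bR})^{\oplus 2}$, whose dimension equals $b_1(\Sigma(L))$, and hence show that the corresponding critical point is non-degenerate if and only if $\det(L)\neq 0$. Proposition \ref{prop_binary_dihedral_rep} (the binary dihedral construction \`a la Klassen) shows that $\det(L)=0$ would produce infinitely many \emph{irreducible} meridian-traceless representations, contradicting the hypothesis. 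So $\det(L)\neq 0$, every element of $R^\natural(L,p)$ is a non-degenerate critical point, and the chain complex has exactly $2^{|L|-1}$ generators on the nose; the lower bound then forces the differential to vanish. A smaller but related error in your write-up: the critical points are elements of $R^\natural(L,p)$, i.e.\ representations of $\pi_1(S^3-(L^\natural\cup\omega))$, and these are all \emph{irreducible} (the earring meridian is sent to $\bfj$ while the link meridians go to $\pm\bfi$), so the issue is never one of reducible connections with residual stabilizer, only of possible degeneracy of the Hessian --- which is what the determinant condition controls.
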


We say that a link $L'$ is a \emph{sublink} of $L$, if every component of $L'$ is also a component of $L$.  The following result will be implied by Proposition \ref{prop_removing_component_rank_change} (see Remark \ref{rmk_sublink_min_I}).

\begin{Proposition}
\label{prop_sublink_minimal_I}
If $L$ has minimal $\II^\natural$, then all the sub-links of $L$ have minimal $\II^\natural$. 
\end{Proposition}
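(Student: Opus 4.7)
The plan is to reduce to removing a single component and then iterate. Writing $L = K_1 \cup \cdots \cup K_n$ and $L' = L - K_i$ for some component $K_i$, it suffices to show that if $L$ has minimal $\II^\natural$ then so does $L'$; the statement for an arbitrary sublink then follows by induction on the number of deleted components.

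The source of leverage is Proposition \ref{prop_removing_component_rank_change}, which, according to the excerpt, simultaneously provides the universal lower bound $\dim_{\bC} \II^\natural(M, q; \bC) \geq 2^{|M|-1}$ and a comparison between $\dim_{\bC} \II^\natural(L, p; \bC)$ and $\dim_{\bC} \II^\natural(L', p'; \bC)$. I expect this comparison to take the form of a factor of two, namely
$$
\dim_{\bC} \II^\natural(L, p; \bC) \;\geq\; 2 \, \dim_{\bC} \II^\natural(L', p'; \bC),
$$
which is the shape one needs in order for the minimality of $L$ to propagate to $L'$. Granting this, combining the hypothesis $\dim_{\bC} \II^\natural(L, p; \bC) = 2^{n-1}$ with the lower bound $\dim_{\bC} \II^\natural(L', p'; \bC) \geq 2^{n-2}$ yields the chain
$$
2^{n-1} \;=\; \dim_{\bC} \II^\natural(L, p; \bC) \;\geq\; 2 \, \dim_{\bC} \II^\natural(L', p'; \bC) \;\geq\; 2 \cdot 2^{n-2} \;=\; 2^{n-1},
$$
forcing equality throughout. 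In particular $\dim_{\bC} \II^\natural(L', p'; \bC) = 2^{n-2}$, so $L'$ has minimal $\II^\natural$ at the chosen base point.

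The one subtlety I expect is the universal quantifier over base points built into Definition \ref{def_minimal_I_natural}: minimality must be witnessed at \emph{every} $p' \in L'$, whereas $L$ is only assumed minimal at the chosen $p \in L$. If $p$ happens to lie on the deleted component $K_i$, one must first migrate it to a surviving component of $L$ before invoking the previous paragraph, and one must also verify the conclusion for every admissible $p' \in L'$. I expect this to be absorbed either by an invariance of $\dim_{\bC} \II^\natural(M, q; \bC)$ as $q$ moves within a fixed component of $M$, or directly because Proposition \ref{prop_removing_component_rank_change} already allows the base point on $L'$ to be chosen freely. This bookkeeping is the only non-formal part of the argument; the mathematical content is the two-line numerical squeeze above.
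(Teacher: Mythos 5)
Your proposal is correct and is exactly the argument the paper intends: Proposition \ref{prop_removing_component_rank_change} does give the inequality $\dim_\bC\II^\natural(L,p;\bC)\ge 2\dim_\bC\II^\natural(L-K_1,p;\bC)$ for any base point $p$ not on the deleted component, and the paper's Remark \ref{rmk_sublink_min_I} leaves implicit precisely the squeeze you carry out. Your handling of the base-point quantifier is also the right one — since minimality of $L$ is assumed at every base point, one simply takes $p$ to be the desired point $p'\in L'$ and applies the proposition directly.
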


By definition, a finite simple graph is called a \emph{forest} if it does not contain any cycle.  A graph is a forest if and only if is isomorphic to a disjoint union of trees. We recall the following definition from \cite{Kh-unlink}.
\begin{Definition}
	A link $L$ is called a \emph{forest of unknots}, if it is given by the disjoint unions and connected sums of Hopf links and unknots.
\end{Definition}

If $L$ is a forest of knots, then its linking graph is a forest. For every forest $G$, there is a unique forest of unknots $L$ up to isotopy such that the linking graph of $L$ is isomorphic to $G$.
Proposition \ref{prop_tree_implies_Hopf_connected_sum} below will be proved in Section \ref{sec_top_properties}.

\begin{Proposition}
\label{prop_tree_implies_Hopf_connected_sum}
Suppose $L$ has minimal $\II^\natural$, then the linking number of every pair of components of $L$ is either $0$ or $\pm 1$. If we further assume that the linking graph of $L$ is a forest, then $L$ is a forest of unknots.
\end{Proposition}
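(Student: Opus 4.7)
The plan is to prove both claims by an induction on the number of components of $L$, with Proposition \ref{prop_sublink_minimal_I} as the main engine. First I would dispose of the linking number bound: given any two components $K,K' \subseteq L$, Proposition \ref{prop_sublink_minimal_I} implies that $K \cup K'$ itself has minimal $\II^\natural$, so $\dim_{\bC} \II^\natural(K \cup K') = 2$. It then suffices to show that a 2-component link $L_0$ with $\dim \II^\natural(L_0) = 2$ has $|\lk| \leq 1$. The natural route is to establish a Floer-theoretic lower bound of the form $\dim \II^\natural(L_0) \geq 2|\lk|$ in the 2-component case, either via a Kronheimer--Mrowka determinant-style inequality $\dim \II^\natural \geq |\det|$ combined with a classical linking-number computation of $\det$ for 2-component links (e.g.\ the $(2,2k)$-torus link has $|\det| = 2|k|$, with a general argument handling arbitrary diagrams), or more intrinsically using the annular instanton Floer homology of \cite{AHI} to view $K'$ as a pattern of winding number $|\lk|$ in a solid torus neighborhood of $K$ and reading off the rank bound from the annular decomposition.

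For the second claim, assume the linking graph $G$ is a forest and induct on $n$, the number of components of $L$. The base case $n=1$ is immediate from Theorem \ref{thm_KM_R}: a non-trivial knot admits an irreducible meridian-traceless $\SU(2)$-representation, which in turn forces $\dim \II^\natural \geq 2$ and contradicts minimality, so $L$ must be the unknot. For the inductive step $n \geq 2$, the forest $G$ contains some vertex $v_i$ of degree at most $1$. Set $K = K_i$ and $L' = L \setminus K$. I would argue that $L$ decomposes either as a split union $K \sqcup L'$ (when $v_i$ is isolated) or as a connected sum of $L'$ with a Hopf link performed along the unique neighbor component $K_j$ (when $v_i$ is a leaf, where the first part of the proposition gives $|\lk(K_i,K_j)|=1$). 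Granting this decomposition, $K$ is forced to be an unknot, while $L'$ is a link with minimal $\II^\natural$ (by Proposition \ref{prop_sublink_minimal_I}) whose linking graph is the sub-forest $G - v_i$; the inductive hypothesis then makes $L'$ a forest of unknots, and the split-union or Hopf-connected-sum operation exhibits $L$ as a forest of unknots as well.

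The expected main obstacle is precisely this topological decomposition step inside the inductive argument. The input is algebraic (minimal $\II^\natural$ together with a leaf or isolated vertex of $G$), while the output is geometric (the existence of an embedded 2-sphere realizing a split union or a Hopf-connect-sum). The implication is not available from linking numbers alone, since a component that is linked $0$ or $\pm 1$ with each other component need not split off or bound a Hopf-disk in general; the minimality of $\II^\natural$ is genuinely necessary, and the argument will have to invoke the topological properties of minimal-$\II^\natural$ links developed in Section \ref{sec_top_properties} through the annular instanton Floer homology of \cite{AHI} and the excision techniques of \cite{XZ:excision}. This is also where the proof is expected to diverge most from its Khovanov counterpart in \cite{XZ:forest}: since $\II^\natural$ lacks a surgery-style computational calculus, the detection of splitting and connect-sum spheres requires a more delicate structural analysis, foreshadowing the need for Sections \ref{sec_eliminate_G0} and \ref{sec_combinatorics} in the rest of the paper.
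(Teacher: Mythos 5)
Your overall architecture matches the paper's: reduce the first claim to two-component sublinks via Proposition \ref{prop_sublink_minimal_I}, then prove the second claim by induction using a vertex of degree at most one in the forest. But the step you yourself flag as "the expected main obstacle" is a genuine gap, not just a deferred detail, and it is exactly the content of the paper's Lemma \ref{lem_Seifert_disks_of_Ki}: if $L$ has minimal $\II^\natural$, then \emph{every} component $K_i$ bounds an embedded disk $D_i$ meeting each $K_j$ ($j\neq i$) transversely in exactly $|\lk(K_i,K_j)|$ points. This is obtained by identifying $\II^\natural(L,p_i)$ with the annular instanton homology $\AHI$ of $L-K_i$ viewed in the solid torus $S^3-N(K_i)$ (using \cite[Section 4.3]{AHI}) and then invoking the meridian-disk detection result \cite[Corollary 4.4]{XZ:forest}. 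Once you have this lemma, the inductive step is immediate and much softer than you anticipate: for a leaf vertex, $D_i$ meets the unique neighbor $K'$ once and misses everything else, so $K_i$ is a meridian of $K'$ and $L$ is $L'$ with a Hopf link summed on; for an isolated vertex, $D_i$ misses $L'$ entirely and $K_i$ splits off as an unknot. No abstract "splitting-sphere detection" and no excision input beyond what is packaged in $\AHI$ is needed here; your proposal gestures at the right tools but never formulates the disk statement that actually closes the argument. (Your base case via Theorem \ref{thm_KM_R} also needs care: an irreducible traceless representation does not by itself give $\dim\II^\natural\ge 2$ without a nondegeneracy or perturbation argument; the paper instead uses $\II^\natural(K)\cong\KHI(K)$ and unknot detection for $\KHI$.)

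For the first claim, your primary route is also shaky. There is no "classical linking-number computation of $\det$" for arbitrary two-component links: $\det(L)$ is not a function of $\lk$ (the Whitehead link has $\lk=0$ and $\det=8$), and the inequality $\det(L)\ge 2|\lk|$ that your argument needs is not established by checking $(2,2k)$-torus links. The paper instead proves the stronger Lemma \ref{lem_2_component_min_I} (a two-component link with minimal $\II^\natural$ is the Hopf link or the unlink) by feeding the bound $\|(1-x)(1-y)\tilde\Delta_L(x,y)\|\le 4$ from Proposition \ref{prop_I>Alexander} into the classification of \cite[Corollary 6.2]{XZ:forest}; the multivariable Alexander polynomial, via the Torres condition, does see the linking number in a way the determinant does not. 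Your alternative annular route (reading the winding number of $K'$ in $S^3-N(K)$ off the rank of $\AHI$) is viable and is essentially the same mechanism as Lemma \ref{lem_Seifert_disks_of_Ki}, so if you develop that option you recover both claims from one lemma.
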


Proposition \ref{prop_cycle_length_neq_4} below will be proved in Section \ref{sec_cycles}.

\begin{Proposition}\label{prop_cycle_length_neq_4}
Suppose $L$ is a link with $n$ components such that 
\begin{enumerate}
	\item $n\ge 3$ and $n\neq 4$,
	\item the linking graph of $L$ is a cycle.
\end{enumerate}  
Then $L$ does not have minimal $\II^\natural$.
\end{Proposition}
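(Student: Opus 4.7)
The plan is to argue by contradiction: assume $L$ is a link with minimal $\II^\natural$ whose linking graph is the cycle $C_n$, with $n\geq 3$ and $n\neq 4$. Label the components cyclically as $K_1,\ldots,K_n$ so that $K_i$ is adjacent to $K_{i+1}$ in the linking graph (indices mod~$n$).

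First I would pin down local structure. By Proposition~\ref{prop_tree_implies_Hopf_connected_sum}, $\lk(K_i,K_{i+1})=\pm 1$ for every $i$ and all other pairwise linking numbers vanish; by Proposition~\ref{prop_sublink_minimal_I}, every sublink also has minimal $\II^\natural$. In particular each two-component sublink $K_i\cup K_{i+1}$ has minimal $\II^\natural$ and linking number $\pm 1$. The topological classification of minimal-$\II^\natural$ links to be developed in Section~\ref{sec_top_properties} should then force $K_i\cup K_{i+1}$ to be isotopic to the standard Hopf link, so that each $K_i$ is an unknot and each consecutive pair is Hopf-linked in the standard model.

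Next I would globalize through excision. Using the Hopf-pair structure of consecutive components, I would apply the excision isomorphism of \cite{XZ:excision} along a sphere $S\subset S^3$ meeting $L$ transversely in four points and separating one Hopf pair from the rest of the cycle. After regluing with an auxiliary standard Hopf pair, excision is expected to yield an identity of the form
$$
\II^\natural(L;\bC)\otimes \II^\natural(H;\bC)\;\cong\;\II^\natural(L_1;\bC)\otimes \II^\natural(L_2;\bC),
$$
where $H$ is the Hopf link and $L_1,L_2$ are auxiliary links whose linking graphs are strictly simpler than $C_n$ (a path and either a shorter cycle or another path, depending on how $S$ is chosen). Since $\dim\II^\natural(H;\bC)=2$ and the $\II^\natural$--rank of a chain of Hopf-linked unknots is controlled by Propositions~\ref{prop_removing_component_rank_change} and \ref{prop_tree_implies_Hopf_connected_sum}, the right-hand side can be computed or bounded inductively in $n$.

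The final step is a rank comparison: I expect the identity to force
$$
\dim \II^\natural(L;\bC) > 2^{n-1}
$$
whenever $n\neq 4$, contradicting the minimality assumption. The value $n=4$ is exceptional because only then can the cycle be split symmetrically into two Hopf pairs of equal length, making both sides of the excision identity collapse to the minimal rank simultaneously; for $n=3$ and $n\geq 5$ no such balanced splitting exists, and the rank surplus on the right propagates to the left. The main obstacle will be implementing the excision cleanly — choosing the splitting sphere $S$, verifying the parity and orientation hypotheses needed to apply \cite{XZ:excision}, and carrying out the inductive rank bookkeeping on the right-hand side. A separate direct argument, most plausibly an annular instanton Floer homology computation in the spirit of \cite{AHI}, will likely be needed for the base case $n=3$, where the cycle admits no nontrivial cycle-preserving split.
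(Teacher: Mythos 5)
Your opening reductions (adjacent components have linking number $\pm 1$, non-adjacent ones $0$; every sublink is minimal; consecutive pairs are standard Hopf links) match the paper, and you correctly flag that $n=3$ needs a separate computation (the paper identifies that link as L6n1 $=T(3,3)$ and shows $\dim\II^\natural=6>4$ via a skein-triangle/annular-instanton argument inside Proposition \ref{prop_2_3_components}). But the core of your argument has a genuine gap. There is no excision or K\"unneth formula for $\II^\natural$ along a sphere meeting the link transversely in \emph{four} points: the available decompositions (e.g.\ the connected-sum formula behind \eqref{eqn_local_sys_compute_2}, or the torus excision used in Proposition \ref{prop_disjoint_from_Seifert_surface}) apply to spheres meeting the link in two or three points, or to tori; a $4$-punctured sphere is precisely the case where the traceless representation variety is positive-dimensional and no tensor decomposition holds. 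Moreover, since each component of a consecutive Hopf pair is also linked with a component outside that pair, any sphere separating the pair from the rest of the cycle must be punctured by those outside components, so the two sides are tangles rather than links and your $L_1,L_2$ are not defined.

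Second, and more fundamentally, the linking graph does not determine the isotopy class of $L$, so there is nothing to "compute inductively" on the right-hand side: infinitely many links have linking graph a cycle. The paper's actual proof spends most of its effort (Section \ref{subsec_isotopy_L_cycle}, using Propositions \ref{prop_disjoint_from_Seifert_surface} and \ref{prop_disks_in_minimal_position} together with arc-isotopy arguments on a genus-zero Seifert surface) showing that minimality of $\II^\natural$ forces $L$ to be isotopic to one of two explicit links $L_{n,1-n}$ or $L_{n,2-n}$, and only then rules these out via the Alexander-polynomial lower bound of Proposition \ref{prop_I>Alexander} and the formula $\det L_{u,v}=2^{u-1}|u+2v|$ (with a separate multivariable computation for $L_{5,-3}$). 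Your heuristic for why $n=4$ is exceptional is also not the right one: the obstruction is that the surviving candidate $L_{4,-2}=\mathrm{L8n8}$ has determinant zero, so the Alexander bound degenerates; whether L8n8 has minimal $\II^\natural$ is left open as Conjecture \ref{conj_l8n8}.
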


\begin{remark}
	It is not clear to the authors whether Proposition \ref{prop_cycle_length_neq_4} still holds when $n=4$. See Proposition \ref{prop_L8n8}, Conjecture \ref{conj_l8n8}, and the discussion afterwards for more details.
\end{remark}

We use $G_0$ to denote the graph given by Figure \ref{fig_G0_graph}. The graph $G_0$ has $6$ vertices and $7$ edges. The following result will be proved in Section \ref{sec_eliminate_G0}.

\begin{figure}
\includegraphics[width=0.4\textwidth]{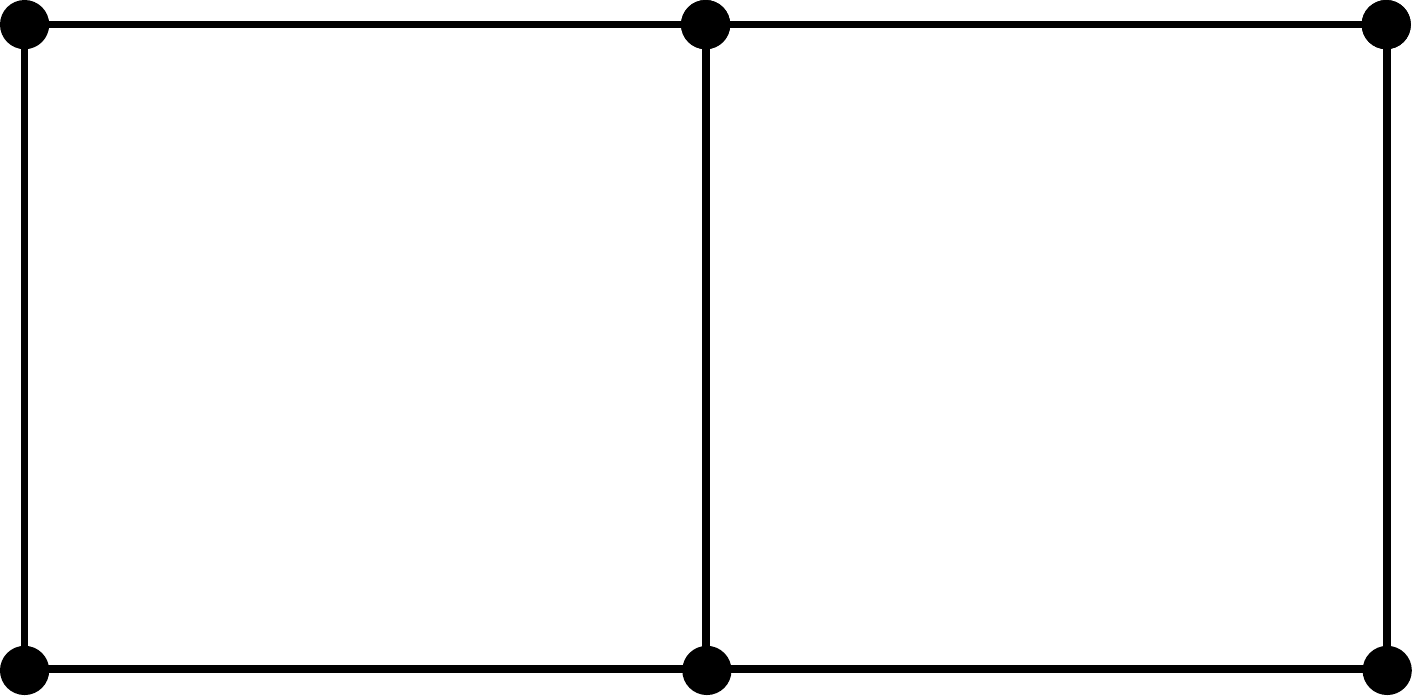}	
\caption{The graph $G_0$.}
\label{fig_G0_graph}
\end{figure}

\begin{Proposition}\label{prop_no_G0}
If $L$ is a link with 6 components such that its linking graph is isomorphic to $G_0$, then $L$ does not have minimal $\II^\natural$.
\end{Proposition}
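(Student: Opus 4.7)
The plan is to proceed by contradiction: suppose $L$ is a 6-component link with linking graph isomorphic to $G_0$, and suppose $L$ has minimal $\II^\natural$. I will derive a contradiction by combining topological constraints on sublinks with a direct Floer-theoretic argument.

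The first step is to extract the topological content of the minimality assumption. By Proposition \ref{prop_sublink_minimal_I}, every sublink of $L$ also has minimal $\II^\natural$. By Proposition \ref{prop_tree_implies_Hopf_connected_sum}, every pairwise linking number of $L$ lies in $\{-1,0,1\}$, and each sub-forest of $G_0$ (viewed as a sublink) is a forest of unknots. Applying Proposition \ref{prop_cycle_length_neq_4}, any induced cycle in $G_0$ of length $\neq 4$ would yield a sublink whose linking graph is such a cycle, giving an immediate contradiction. Since $G_0$ has 6 vertices, 7 edges, and first Betti number $2$, the surviving case is that both independent cycles of $G_0$ must be induced 4-cycles; so $G_0$ decomposes as a union of two 4-cycles overlapping in a shared subset of vertices (a configuration one can match against the figure). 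The two 4-cycles furnish two 4-component sublinks $L',L''\subset L$, each constrained to be a minimal-$\II^\natural$ link with linking graph a 4-cycle.

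The second step is to convert this rigid combinatorial picture into a topological decomposition of $S^3\setminus L$. Using the sub-forest and 4-cycle structural results developed in Section \ref{sec_top_properties}, I would locate essential spheres, annuli or tori in the complement of $L$ corresponding to the Hopf-plumbing structure forced on each sub-forest, and check that the pair $(L',L'')$ forces these surfaces to fit together consistently along the components shared between the two 4-cycles. The outcome is a preferred family of decomposing surfaces in $S^3\setminus L$ whose existence captures the global shape of $L$.

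The final, most delicate step is the Floer computation. Using the excision property of singular instanton Floer homology from \cite{XZ:excision} together with the general properties proved in Section \ref{sec_singular_instanton}, I would express $\II^\natural(L,p;\bC)$ in terms of Floer invariants of the pieces obtained by cutting along the decomposition surfaces, and then combine dimension estimates for the individual pieces (including the explicit computations collected in the appendix) to produce a strict lower bound
\[
\dim_{\bC} \II^\natural(L,p;\bC)\;>\; 2^{6-1}\;=\;32,
\]
contradicting the minimality of $\II^\natural$ for $L$. The main obstacle is precisely this last step: unlike the Khovanov situation treated in \cite{XZ:forest}, singular instanton Floer homology rarely admits closed-form evaluation, so the excision bookkeeping and the careful tracking of generators must be arranged with considerable care. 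This is exactly the reason $G_0$ needs a dedicated section and does not fall out of the combinatorial cycle analysis leading to Proposition \ref{prop_cycle_length_neq_4}.
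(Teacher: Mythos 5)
Your overall strategy (contradiction via sublink minimality, leading to a dimension bound exceeding $2^{5}=32$) matches the paper's, and your first step is broadly correct, but the two steps that carry all the weight are gaps rather than arguments. Your ``second step'' --- locating essential spheres, annuli or tori and checking that they ``fit together consistently'' --- is a placeholder. What the paper actually does is pin down the isotopy class of $L$ completely: it works not with the two $4$--cycle sublinks but with the \emph{tree} sublink $L'=K_3\cup K_4\cup K_5\cup K_6$, realized as a standard chain of unknots via Proposition \ref{prop_tree_implies_Hopf_connected_sum}; it uses Proposition \ref{prop_L8n8} only to force the product of linking numbers around each $4$--cycle to equal $+1$, so that $K_1$ and $K_2$ have zero algebraic intersection with a genus-zero Seifert surface $S$ of $L'$; it then applies Proposition \ref{prop_disjoint_from_Seifert_surface} to isotope $K_1\cup K_2$ off $S$, and identifies the arcs $D_i\cap S$ up to isotopy via a free-group computation (Lemma \ref{lem_free_group_3}) together with the bigon criterion (Lemma \ref{lem_disjoint_homotopy_implies_isotopy}) and Proposition \ref{prop_disks_in_minimal_position}. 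The output is that $L$ must be the single explicit link of Figure \ref{fig_link_G0}. Nothing in your sketch produces a specific link, or even a finite list of candidates, and without that the final step cannot be set up.

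Your final step is also not how the conclusion is reached, and as stated it would not go through: you propose an excision decomposition of $\II^\natural(L,p;\bC)$ along the unspecified surfaces and assert that the pieces assemble into a strict lower bound $>32$, but you give no mechanism for producing that bound --- indeed you acknowledge this is ``the main obstacle.'' The paper instead applies the Alexander-polynomial lower bound of Proposition \ref{prop_I>Alexander} to the one concrete link obtained from the topological step: a direct computation gives $\|(1-x_1)\cdots(1-x_6)\tilde{\Delta}_L\|=1216$, hence $\dim_\bC\II^\natural(L,p;\bC)\ge 1216/32=38>32$ (Lemma \ref{lem_lower_bound_I_G0}). So the contradiction in your proposal is asserted rather than derived; the missing ideas are (i) the complete determination of the isotopy class of $L$ and (ii) the Alexander-polynomial (rather than excision) route to the dimension bound.
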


We also introduce the following definition about graphs.

\begin{Definition}
\label{def_restriction_graph}
	Let $G$ be a finite simple graph with vertex set $V$. Suppose $V'$ is a subset of V. We define the \emph{restriction of $G$ to $V'$} to be the graph $G'$ such that 
	\begin{enumerate}
		\item the vertex set of $G'$ is $V'$,
		\item  the edge set of $G'$ consists of all the edges of $G$ whose vertices are both contained in $V'$.  
	\end{enumerate}	
	\end{Definition} 

Proposition \ref{prop_graph} below is a combinatorial result and will be proved in Section \ref{sec_combinatorics}.

\begin{Proposition}
\label{prop_graph}
 Suppose $G$ is a finite simple graph with vertex set $V$. Then at least one of the following conditions holds:
\begin{enumerate}
\item $G$ is a tree;
\item there exists $V'\subset  V$, such that the restriction of $G$ to $V'$ is a cycle of order $n$ with $n\ge 3$ and $n\neq 4$;
\item there exists $V'\subset  V$, such that the restriction of $G$ to $V'$ is isomorphic to $G_0$;
\item 
there exists a non-constant 
map $\varphi: V\to \{\bfi,\bfj,\bfk\}$, such that 
for every $v\in V$, the image $\varphi(v)$ is commutative to 
$$
\prod_{\{w\in V|w \text{ is adjacent to }v\}} \varphi(w). 
$$
\end{enumerate}
\end{Proposition}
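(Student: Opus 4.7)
The plan is to assume that (1), (2), (3) all fail and construct a non-constant $\varphi:V\to\{\bfi,\bfj,\bfk\}$ satisfying the commutativity condition in (4). First I dispose of easy reductions: if $G$ is disconnected, pick a proper non-empty connected component $C$ and set $\varphi\equiv\bfi$ on $C$ and $\varphi\equiv\bfj$ on $V\setminus C$. Every vertex has all of its neighbors in the same component as itself, hence the same color, so the neighbor-product trivially commutes with $\varphi(v)$, and $\varphi$ is non-constant. Hence we may assume $G$ is connected with at least two vertices. Since (1) fails, $G$ has a cycle; since (2) fails, every induced cycle of $G$ has length exactly $4$. In particular $G$ has no induced triangle, and since the shortest odd cycle of any graph is induced (any chord splits it into two shorter cycles, one of which is odd), $G$ has no odd cycle at all, i.e.\ $G$ is bipartite.

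Next I reformulate condition (4) combinatorially. Passing to $\SU(2)/\{\pm 1\}\cong\SO(3)$, the elements $\bfi,\bfj,\bfk$ descend to the three non-identity elements of the Klein four-group; a direct computation then shows that $\varphi(v)$ commutes in $\SU(2)$ with $\prod_{w\sim v}\varphi(w)$ if and only if the two colors distinct from $\varphi(v)$ appear equally often modulo $2$ among the neighbors of $v$. Equivalently, for every $v$ the number of neighbors of $v$ whose color differs from $\varphi(v)$ is even. Specializing to a $2$-coloring $V=A\sqcup(V\setminus A)$ using only two of the three colors, the condition reduces to $L_G\chi_A\equiv 0\pmod 2$, where $L_G=D-A_G$ is the graph Laplacian and $\chi_A\in\bF_2^V$ is the characteristic vector of $A$. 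Since the all-ones vector $\mathbf{1}$ always lies in $\ker(L_G\bmod 2)$, it suffices to prove
\[
\dim_{\bF_2}\ker(L_G\bmod 2)\;\geq\;2
\]
for every connected bipartite graph $G$ in which every induced cycle is $C_4$ and which contains no induced copy of $G_0$; any non-trivial kernel element then yields the required non-constant $2$-coloring.

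The main work, and the main obstacle, is this last structural claim, which I would attack by induction on $|V|$. Chordal bipartite graphs (those in which every induced cycle is a $C_4$) admit well-known elimination orderings via bisimplicial edges, or more elementarily via a vertex whose closed neighborhood is contained in that of another; the plan is to locate such a ``reducible'' vertex $v$, delete it to obtain $G'=G-v$, and compare $\ker(L_G\bmod 2)$ with $\ker(L_{G'}\bmod 2)$. Since $G'$ is still connected, still chordal bipartite, and (after a separate verification) still $G_0$-free, the inductive hypothesis supplies a non-trivial element of $\ker(L_{G'}\bmod 2)$ which one then extends across $v$ by an explicit rule dictated by the local edges at $v$. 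The genuinely delicate step is the combinatorial lifting: one must show that whenever no reduction of this type succeeds, $G$ itself already contains an induced copy of $G_0$, contradicting~(3). This is where the specific structure of $G_0$ and the rigidity of chordal bipartite graphs must be exploited in a careful case analysis of the neighborhood of $v$ and the surrounding $4$-cycles, and is presumably the place where the graph-theoretic input acknowledged to Fan Wei enters. If it turns out that $2$-colorings are not sufficient for certain chordal bipartite configurations, the plan extends naturally by allowing genuine $3$-colorings, constructed by an analogous inductive procedure.
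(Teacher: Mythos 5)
Your reduction of condition (4) to a parity statement is correct, and the restriction to two-colorings does become the linear condition $L_G\chi_A\equiv 0\pmod 2$; but the structural claim on which your whole plan rests --- that $\dim_{\bF_2}\ker(L_G\bmod 2)\ge 2$ for every connected, chordal bipartite, $G_0$-free graph --- is false. Take $G=K_{3,3}$: it is connected, every induced cycle is a $C_4$, and it contains no induced $G_0$ (it has only $6$ vertices and $9$ edges, so the only induced subgraph on all six vertices is $K_{3,3}$ itself). Modulo $2$ its Laplacian is $I+A$, and a vector in the kernel must be constant on each side of the bipartition with the two constants equal, so the kernel is exactly $\{0,\mathbf{1}\}$ and no non-constant two-coloring satisfies (4). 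This is not a peripheral case: the paper's proof terminates precisely at complete bipartite graphs (Case 3.2), where it must use a genuine three-coloring, choosing $\varphi$ so that $\prod_{v\in V_1}\varphi(v)=\pm 1$ and $\prod_{v\in V_2}\varphi(v)=\pm 1$. Your closing sentence acknowledges that three colors might be needed, but offers no construction, so the base of your induction is missing.

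Beyond that, the inductive step itself is only a plan. The paper's argument is an induction on $|V|$, but its engine is a specific decomposition: choose $x_0,y_0$ in one side of the bipartition maximizing $\#\,N(x_0)\cap N(y_0)$, set $A=N(x_0)\cap N(y_0)$ and $B=\{v: A\subseteq N(v)\}$, and prove (Lemma \ref{lem_graph_A_in_N(x)}) that if some $x\in B$ has a neighbor outside $A$ then $x$ is a cut vertex --- this is exactly where the exclusion of $G_0$ and of induced cycles of length $\neq 4$ is used. The cut vertex allows the graph to be split and the inductive $\varphi$'s to be glued by a constant extension; the remaining case collapses to a complete bipartite graph. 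Your proposal gestures at "bisimplicial edges," "an explicit rule dictated by the local edges at $v$," and "a careful case analysis" without supplying any of them, and it is precisely in those steps that the content of the proposition lies. As written, the proposal identifies the right reformulation of (4) but proves neither the lifting step nor a correct base case.
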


\begin{remark} 
\label{rmk_graph}
	Let $G$ be the graph given by Figure \ref{fig_figures/K5-edge}.  Then $G$ is not a tree and does not satisfy Condition (3) above. It is also straightforward to check that $G$ does not satisfy Condition (4). Therefore, Condition (2) cannot be removed from the statement of Proposition \ref{prop_graph}. 
	It is not clear to the authors whether Condition (3) can be removed from the statement. 
\end{remark}

\begin{figure}
	\includegraphics[width=0.25\textwidth]{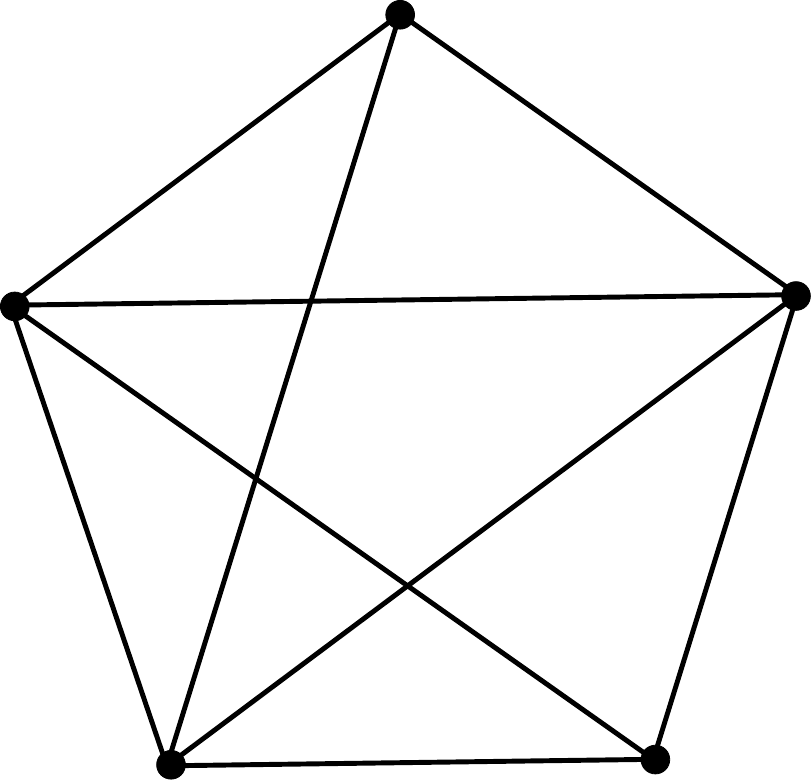}
	\caption{A graph with $5$ vertices.}
	\label{fig_figures/K5-edge}
\end{figure}

Assuming all the results above, we can now prove the main theorem.

\begin{proof}[Proof of Theorem \ref{thm_main}]
The proof consists of two parts.

(1) If $L$ is the Hopf link or a connected sum of Hopf links, we prove that $\pi_1(S^3-L)$ does not admit any irreducible traceless $\SU(2)$--representation. Suppose $L$ has $n$ components $K_1,\cdots,K_n$, then $\pi_1(S^3-L)$ has a presentation given as follows. There are $n$ generators $g_1,\cdots,g_n$, where $g_i$ is given by the meridian of $K_i$. For each pair $(i,j)$ such that $\lk(K_i,K_j)=\pm 1$, there is a relation $[g_i,g_j]=1$. As a consequence, if $\rho:\pi_1(S^3-L)\to\SU(2)$ is a meridian-traceless representation, then $\rho(g_i)=\pm \rho(g_j)$ whenever $\lk(K_i,K_j)=\pm 1$. Since the linking graph of $L$ is connected, $\rho$ is reducible. 

(2) Now suppose  $\pi_1(S^3-L)$ does not admit any  
irreducible meridian-traceless $\SU(2)$--representations, we prove that $L$ is the Hopf link or a connect sum of Hopf links.
By Propositions \ref{prop_only_abelian_rep_implies_minimal_I} and \ref{prop_sublink_minimal_I}, every sublink of $L$ has minimal $\II^\natural$. Let $G$ be the linking graph of $L$.  By Proposition \ref{prop_graph}, there are 4 cases: 
\begin{enumerate}
	\item [(i)] If $G$ is a tree, then Proposition \ref{prop_tree_implies_Hopf_connected_sum} implies that $L$ is the Hopf link or a connected sum of Hopf links.
	\item [(ii)] If $G$ satisfies Condition (2) of Proposition \ref{prop_graph}, then there is a contradiction by Proposition \ref{prop_cycle_length_neq_4}.
	\item [(iii)] If $G$ satisfies Condition (3) of Proposition \ref{prop_graph}, then there is a contradiction by Proposition \ref{prop_no_G0}.
	\item [(iv)] If $G$ satisfies Condition (4) of Proposition \ref{prop_graph}, then by Lemma \ref{lem_existence_ijk_rep}, there is an irreducible meridian-traceless $\SU(2)$--representation of $\pi_1(S^3-L)$, which contradicts the assumptions.
\end{enumerate}
In conclusion, the link $L$ is the Hopf link or a connect sum of Hopf links, and hence the theorem is proved. 
 \end{proof}
 
 Now we prove Corollary \ref{cor_Hopf_link_NP}.

\begin{repCorollary}{cor_Hopf_link_NP}
Suppose $L\subset S^3$ is 
given by a link diagram, and assume the generalized Riemann hypothesis (GRH). Then the
assertion that $L$ is the unknot, the Hopf link or a connected sum of Hopf links 
is in {\bf coNP}.
\end{repCorollary}
\begin{proof}
The proof is adapted from \cite{Kuperberg_unknot_coNP}  
with minor changes. 
 
Given a link diagram of $L$, we obtain the Wirtinger presentation
of $\pi_1(S^3-L)$ whose length $l$ depends polynomially on the number of crossings.
If $L$ is not the unknot, the Hopf link, or a  connected sum of Hopf links, then by Theorem \ref{thm_main}, there is a meridian-traceless homomorphism 
$$
\rho_\bC : \pi_1(S^3-L)\to SU(2)\subset SL(2,\bC)
$$
with non-abelian image. 

By the Wirtinger presentation,
meridian-traceless representations of $\pi_1(S^3-L)$ in $\SL(2,\bC)$  are given by 
solutions to a system of polynomial equations with integer coefficients.  
 Assuming the GRH, Kuperberg \cite[Theorem 3.3]{Kuperberg_unknot_coNP}  proved that if 
a system of polynomial equations with integer coefficients has a solution over $\bC$, then
it also has a solution over $\bZ/p$ for some prime $p$, such that $\log p$
is bounded from above by a polynomial on the number of  equations, the number of variables,
the number of digits of the maximum degree of the equations, and the number of digits of the maximum absolute value of the coefficients. All these data are bounded from above by a polynomial of $l$.
As a consequence, there is a meridian-traceless homomorphism
\begin{equation*}
\rho_p : \pi_1(S^3-L)\to  \SL(2,\bZ/p),
\end{equation*}
where $p$ is a prime number, such that $\log p$ is bounded from above by a polynomial of $l$. 
Moreover, one can further require that $\rho_p$ has a non-abelian image using the ``Rabinowitsch trick'' described in  \cite[Theorem 3.4]{Kuperberg_unknot_coNP}.

By considering the diagonalizations in the algebraic closure of $\bZ/p$, it is straightforward to verify that if $a,b\in \SL(2,\bZ/p)$ are two traceless elements that are commutative to each other, then $a=\pm b$. As a consequence, if $L'$ is an unknot, a Hopf link, or a connected sum of Hopf links, then $\pi_1(S^3-L')$ does not admit non-abelian traceless representations in  $\SL(2,\bZ/p)$. Therefore, the prime number $p$ and the representation $\rho_p$ above provide the desired \emph{certificate} for the problem.  
\end{proof}

The rest of the paper is devoted to the proofs of Propositions \ref{prop_only_abelian_rep_implies_minimal_I}, \ref{prop_sublink_minimal_I}, \ref{prop_tree_implies_Hopf_connected_sum}, \ref{prop_cycle_length_neq_4}, \ref{prop_no_G0}, and \ref{prop_graph}.

\section{Singular Instanton Floer homology}\label{sec_singular_instanton}
This section proves several general properties of singular instanton Floer homology. 
Suppose $L$ is a link in $S^3$, we use $|L|$ to denote the number of components of $L$.
Pick a base point  $p\in L$, the reduced singular instanton Floer
homology $\II^\natural(L,p)$ is defined by Kronheimer and Mrowka in 
\cite{KM:Kh-unknot}. 
 Roughly speaking, $\II^\natural(L,p)$ is defined as the Morse homology
of the (perturbed) Chern-Simons functional on a configuration space
of orbifold connections.
Let $m_p$ be a small meridian around $p$, let
$\omega$ be a small arc joining $m_p$ and $p$, let $\mu(\omega)$ be a meridian of $\omega$,
and let $\mu_i$ ($i=1,\cdots,|L|+1$) be the meridians of the components of $L^\natural:=L\cup m_p$. We require $\mu_{|L|+1}$ to be the meridian of $m_p$.
The critical set of the unperturbed Chern-Simons functional is given by
\begin{multline*}
	R^\natural(L,p):=\{
\rho: \pi_1(S^3-(L^\natural \cup \omega))\to \SU(2)|
\\
\trace\rho(\mu_i)=0~\text{for all}~i, 
\rho(\mu({\omega}))=-\id 
\}/\text{conjugations}.
\end{multline*}
In general, we have to perturb the Chern-Simons
functional and use the perturbed critical points to define $\II^\natural$.
However, if all the elements in 
$R^\natural(L,p)$ are non-degenerate critical points of the Chern-Simons
functional, then we can define $\II^\natural(L,p)$ using these points directly.

 Fix an orientation and a planar diagram for $L$, and fix a base point $p\in L$ on the diagram. Recall that by the Wirtinger presentation, the group $\pi_1(S^3-L)$ is generated by 
the meridians around the arcs of the diagram. Therefore, to specify a presentation of $\pi_1(S^3-L)$, one only needs to specify the image of the meridian around each arc.
 We define 
 $$
 R(L,p):=\{\rho: \pi_1(S^3-L)\to \SU(2)|
 \trace\rho(\mu_i)=0,~ 1\le i \le |L|, ~ \rho (m_p)=\mathbf{i}                 \}
 $$ 
Notice that in the definition of $R(L,p)$, one does not take the quotient by conjugations. The following  lemma implies that $R(L,p)$ does not depend on the choice of the diagram.
\begin{Lemma}
\label{lem_R=R^natural}
There is a canonical one-to-one correspondence between  $R(L,p)$ and 
$R^\natural(L,p)$. 
\end{Lemma}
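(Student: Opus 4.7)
The plan is to construct an explicit bijection by pairing Wirtinger-type presentations of the two fundamental groups. Fix a planar diagram $D$ of $L$ with $p$ lying on some arc, and enlarge $D$ to a diagram $D^\natural$ of the spatial graph $L^\natural\cup\omega$ by inserting a small \emph{earring} at $p$: draw $m_p$ as a tiny loop crossing the arc of $L$ through $p$ twice (once over, once under), and draw $\omega$ as a short arc from a point $q\in m_p$ to $p$. This yields a Wirtinger-type presentation of $\pi_1(S^3-(L^\natural\cup\omega))$ with meridian generators for each arc, Wirtinger relations at every crossing, and product-equals-one relations at the trivalent vertices $p$ and $q$.

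The key local observation is that for any $\tilde\rho\in R^\natural(L,p)$, the vertex relation at $p$ combined with $\tilde\rho(\mu(\omega))=-\id$ forces $\tilde\rho(b)=-\tilde\rho(a)$, where $a,b$ are the two meridian generators of $L$ adjacent to $p$; the vertex at $q$ analogously forces $\tilde\rho(m^-)=-\tilde\rho(m^+)$; and the two Wirtinger relations at the $m_p$-$L$ crossings then force $\tilde\rho(m^+)$ to be a traceless element anticommuting as a quaternion with $\tilde\rho(a)$, i.e., $\tilde\rho(m^+)\tilde\rho(a)\tilde\rho(m^+)^{-1}=-\tilde\rho(a)$.

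Given $\rho\in R(L,p)$, I would define $\tilde\rho$ on the generators of $\pi_1(S^3-(L^\natural\cup\omega))$ as follows: $\tilde\rho(\mu(\omega))=-\id$, $\tilde\rho(a)=\rho(m_p)=\mathbf{i}$, $\tilde\rho(m^+)=\mathbf{j}$ (a fixed traceless element perpendicular to $\mathbf{i}$); on meridians of arcs of $L$ lying on components of $L$ other than the one containing $p$, let $\tilde\rho$ agree with $\rho$; on every other arc of the component containing $p$, let $\tilde\rho$ equal $-\rho$. The remaining values ($\tilde\rho(b)$, $\tilde\rho(m^-)$, and the auxiliary $L$-arcs created by the $m_p$-$L$ crossings) are forced by the local relations above. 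One checks that $\tilde\rho$ respects every Wirtinger and vertex relation: the sign convention propagates consistently through crossings of $L$ away from the earring because $\SU(2)$-conjugation is insensitive to multiplication by the centre $\{\pm\id\}$; the two $m_p$-$L$ crossings are handled by the identities $\mathbf{j}\mathbf{i}\mathbf{j}^{-1}=-\mathbf{i}$ and $(-\mathbf{i})\mathbf{j}(-\mathbf{i})^{-1}=-\mathbf{j}$; the vertex relations hold by construction. Any two valid choices of $\tilde\rho(m^+)$ are conjugate by the stabilizer $U(1)\subset\SU(2)$ of $\mathbf{i}$, so the class $[\tilde\rho]\in R^\natural(L,p)$ is independent of that choice. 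The inverse map normalizes a representative so that $\tilde\rho(a)=\mathbf{i}$ and $\tilde\rho(m^+)=\mathbf{j}$, and reads off $\rho$ with the sign convention reversed.

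The main obstacle is global consistency of the sign-twisted lift around the component $K\subset L$ containing $p$. In $\pi_1(S^3-(L^\natural\cup\omega))$ one can write $\mu(\omega)$ as a commutator of the form $[wm^+,a]$, where $w$ is the Wirtinger word conjugating the arc immediately above the earring back around $K$ to $a$. Under the proposed $\tilde\rho$, the element $\tilde\rho(w)$ lies in the centralizer $U(1)$ of $\mathbf{i}$ in $\SU(2)$, since $w$ commutes with the meridian of $K$ at $p$ in $\pi_1(S^3-L)$ and the sign twist is central; consequently $\tilde\rho(wm^+)=\tilde\rho(w)\cdot\mathbf{j}$ lies in the $\mathbf{j}\mathbf{k}$-plane, and the identity $[\mathbf{i},X]=-\id$ for any unit element $X$ of that plane (a consequence of $\mathbf{i}\mathbf{j}\mathbf{i}\mathbf{j}=-\id$) yields $\tilde\rho(\mu(\omega))=-\id$ automatically, so the construction closes up without contradiction and the bijection follows.
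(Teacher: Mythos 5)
Your proposal is correct and follows essentially the same route as the paper: both fix a planar diagram, insert the earring $m_p\cup\omega$ near $p$, define $\rho^\natural$ via the Wirtinger presentation by prescribing the meridian values on the new arcs (with $\bfj$ on $m_p$ and a sign flip across the vertex where $\omega$ attaches, forced by $\mu(\omega)\mapsto-\id$), and observe that normalizing $\tilde\rho(a)=\bfi$, $\tilde\rho(m^+)=\bfj$ inverts the construction since every element of $R^\natural(L,p)$ is irreducible. The only cosmetic difference is your sign convention (negating $\rho$ on all but one sub-arc of the component through $p$, versus the paper's purely local modification in which the two outer arcs keep the value $\bfi$), and your final commutator-based consistency check is redundant once all Wirtinger and vertex relations have been verified.
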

\begin{proof} 
Fix an oriented planar diagram of $L$, and let $\rho\in R(L,p)$.  By the Wirtinger presentation, an $\SU(2)$--representation of $S^3-L$ is equivalent to a map from the set of the arcs to $\SU(2)$ that satisfies certain compatibility conditions.  Add $\omega$ and  $m_p$ to the diagram of $L$ as in  Figure \ref{fig_meridian_representation}. We define a representation $\rho^\natural$ of $\pi_1(S^3-(L^\natural-\omega))$ by taking the images of the meridians near $p$ as in Figure \ref{fig_meridian_representation}, and taking the images of the meridians of the other arcs to be the same as $\rho$. It is straightforward to verify that $\rho^\natural\in R^\natural(L,p)$, and that the map from $\rho$ to $\rho^\natural$ is a one-to-one correspondence.
\end{proof}

\begin{figure}
\begin{overpic}[width=0.5\textwidth]{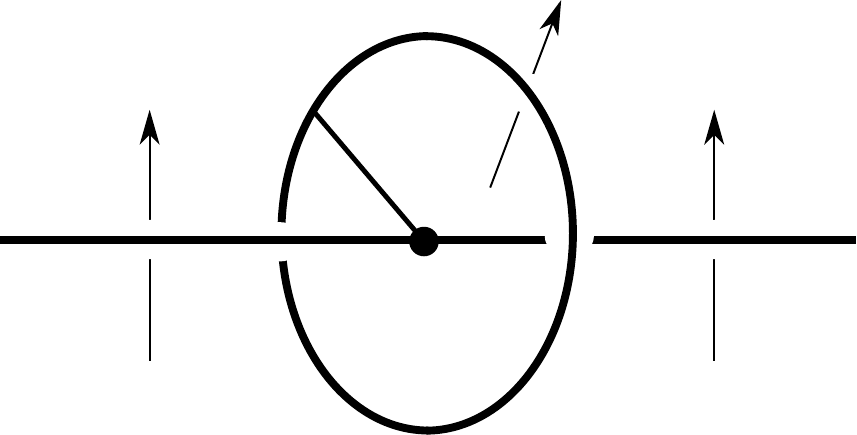}
\put(20,60){$\bfi$}	
\put(140,60){$\bfi$}	
\put(122,80){$\bfj$}	
\put(80,58){$\omega$}	
\put(85,30){$p$}	
\put(115,10){$m_p$}	
\end{overpic}
\caption{Define $\rho^\natural$ near $p$.}
\label{fig_meridian_representation}
\end{figure}

For $\rho\in R(L,p)$, we will use $\rho^\natural\in R^\natural(L,p)$ to denote the corresponding
$\SU(2)$--representation of 
$\pi_1(S^3-(L^\natural\cup\omega))$
given by Lemma \ref{lem_R=R^natural}. 
According to \cite[Lemma 3.13]{KM:YAFT},
the representation $\rho^\natural \in R^\natural(L,p)$ gives a non-degenerate critical point if and only if
the kernel of the map
\begin{equation}\label{eq_hessian_I_natural}
H^1(S^3-L^\natural; \ad \rho^\natural)\to 
H^1(\underline{m}^\natural;\ad \rho^\natural)
\end{equation}
is zero, where $\underline{m}^\natural$
is any collection of loops representing the meridians of all the components of $L^\natural$, and $\ad \rho^\natural$ is the local system on $S^3-L^\natural$ defined by the composition of $\rho^\natural$ with the adjoint action of $\SU(2)$ on 
$\mathfrak{su}(2)\cong \bR^3$. Since $-\id\in\SU(2)$ acts trivially on $\mathfrak{su}(2)$, the local system $\ad \rho^\natural$ extends from $S^3-(L^\natural\cup\omega)$ to $S^3-L^\natural$.

If we further assume that $\rho\in R(L,p)$ is abelian, then $\ad \rho^\natural$ maps
all the meridians of $L$ to the adjoint actions of $\pm\bfi$, which is given by $\text{diag}(1,-1,-1)\in\SO(3)$. 
\begin{Lemma}\label{lem_hessian_I}
 Suppose $\rho \in R(L,p)$ is an abelian representation.
Then the kernel of the map $\eqref{eq_hessian_I_natural}$ is isomorphic to
the kernel of 
\begin{equation}\label{eq_hessian_I}
H^1(S^3-L, \ad \rho)\to 
H^1(\underline{m};\ad \rho),
\end{equation}
where $\underline{m}$
is any collection of loops representing the meridians of all the components of $L$.
\end{Lemma}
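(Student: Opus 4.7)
The plan is to use the abelian structure of $\rho$ to decompose both adjoint local systems into rank-one twisted pieces and compare the resulting kernel contributions summand-by-summand.

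First I would observe that since $\rho(m_i)\in\{\pm\bfi\}$ for every meridian of $L$ and $\rho^\natural(m_p^\circ)=\bfj$ (by the construction in Figure~\ref{fig_meridian_representation}), both $\Ad(\bfi)$ and $\Ad(\bfj)$ preserve the orthogonal decomposition $\mathfrak{su}(2) = \bR\bfi\oplus\bR\bfj\oplus\bR\bfk$ and act diagonally. Consequently, $\ad\rho$ splits on $S^3-L$ as $\bR\oplus\bR_\chi^{\oplus 2}$, where $\chi(m_i)=-1$ for every meridian of $L$; and $\ad\rho^\natural$ splits on $S^3-L^\natural$ as $\bR_{\chi_\bfi}\oplus\bR_{\chi_\bfj}\oplus\bR_{\chi_\bfk}$, where the three characters send $(m_i,m_p^\circ)$ to $(+1,-1)$, $(-1,+1)$, $(-1,-1)$ respectively. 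Both maps \eqref{eq_hessian_I_natural} and \eqref{eq_hessian_I} decompose compatibly with these splittings, and so do their kernels.

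The kernel of \eqref{eq_hessian_I} is then easy to read off: the $\bR$-summand contributes $0$ since $H^1(S^3-L;\bR)\to H^1(\underline{m};\bR)$ is an isomorphism onto $\bR^{|L|}$ by the dual-basis property of meridians, while each $\bR_\chi$-summand contributes the entire $H^1(S^3-L;\bR_\chi)$ because $H^1(S^1;\bR_{-1})=0$, giving a total dimension of $2h^1(S^3-L;\bR_\chi)$. For the kernel of \eqref{eq_hessian_I_natural}, I would handle each summand separately. The $\chi_\bfj$-summand is the pullback of $\chi$ under $\pi_1(S^3-L^\natural)\twoheadrightarrow\pi_1(S^3-L)$; the long exact sequence of the pair $(S^3-L,\,S^3-L^\natural)$, combined with excision and the Thom isomorphism, gives $H^1(S^3-L^\natural;\bR_{\chi_\bfj})\cong H^1(S^3-L;\bR_\chi)$ (the relative cohomology vanishes by $H^*(m_p;\bR_{-1})=0$), and the restriction to $m_p^\circ$ is zero because $m_p^\circ$ is nullhomotopic in $S^3-L$. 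The $\chi_\bfi$-summand contributes $0$: its associated double cover of $S^3-L^\natural$ extends to the branched double cover of $S^3-L$ along $m_p$, namely $S^3\setminus(L^{(1)}\cup L^{(2)})$, and the anti-invariant part of $H^1$ is spanned by classes dual to $m_i^{(1)}-m_i^{(2)}$, which restrict injectively to $\bigoplus_i H^1(m_i;\bR)$. Finally, the $\chi_\bfk$-summand has vanishing target and is shown to equal $H^1(S^3-L;\bR_\chi)$ via a Poincar\'e--Lefschetz duality argument on the compact exterior $S^3\setminus N(L^\natural)$, making essential use of the vanishing $H^*(\partial N(m_p);\bR_{\chi_\bfk})=0$ (both generators of the boundary torus have monodromy $-1$).

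Summing the three LHS contributions yields $0+h^1(S^3-L;\bR_\chi)+h^1(S^3-L;\bR_\chi)=2h^1(S^3-L;\bR_\chi)$, matching the RHS, and the three individual identifications then assemble to a natural isomorphism of the two kernels. The main obstacle will be the analysis of the $\chi_\bfk$-summand: unlike $\chi_\bfj$, the character $\chi_\bfk$ is not pulled back from any character on $S^3-L$, so one cannot directly use the excision argument that handled $\chi_\bfj$. Instead, duality on the compact link exterior, exploiting the vanishing on the $m_p$-boundary torus, is what allows one to identify $H^1(S^3-L^\natural;\bR_{\chi_\bfk})$ with $H^1(S^3-L;\bR_\chi)$.
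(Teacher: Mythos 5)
Your overall strategy is sound and genuinely different from the paper's: the paper views $(S^3,L^\natural)$ as the connected sum of $(S^3,L)$ with a Hopf link $H$ and runs a single Mayer--Vietoris sequence, using $H^*(S^3-H;\ad\rho_0)\cong H^*(T^2;\ad\rho_0)=0$ to kill the Hopf-link piece; this handles all three rank-one summands at once, whereas you treat them separately. Your identification of the kernel of \eqref{eq_hessian_I} and your analysis of the $\chi_\bfj$--summand are correct.

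There are, however, two gaps. The serious one is the $\chi_\bfk$--summand, which you rightly single out as the crux and then do not actually prove. Poincar\'e--Lefschetz duality on $E^\natural:=S^3-\mathrm{int}\,N(L^\natural)$ together with $H^*(\partial N(m_p);\bR_{\chi_\bfk})=0$ only yields $H^1(E^\natural;\bR_{\chi_\bfk})\cong H_2(E^\natural,\partial E^\natural;\bR_{\chi_\bfk})$, hence $\dim H^1=\dim H^2$ (indeed \emph{all} of $H_*(\partial E^\natural;\bR_{\chi_\bfk})$ vanishes, since every boundary torus carries a meridian of $L$ with monodromy $-1$); it does not by itself connect anything to $H^1(S^3-L;\bR_\chi)$, and since $\chi_\bfk$ is not pulled back from $S^3-L$ no long exact sequence of a pair is available. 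A genuine extra input is needed: for instance, refill $\partial N(m_p)$ with a solid torus along the slope $\mu_{|L|+1}+\lambda$ (where $\lambda$ is the longitude of $m_p$, a meridian of $L$), on which $\chi_\bfk=(-1)(-1)=+1$ so the system extends; this $\pm1$--surgery on the unknot $m_p$ returns $(S^3,L)$ because the twisting disk meets $L$ once, and Mayer--Vietoris with vanishing cohomology of the gluing torus and of the new solid torus gives $H^1(E^\natural;\bR_{\chi_\bfk})\cong H^1(S^3-L;\bR_\chi)$. (The paper's connected-sum Mayer--Vietoris delivers this summand for free.) The second, more minor, issue is in the $\chi_\bfi$--summand: the branched double cover of $S^3-L$ along $m_p$ is not $S^3\setminus(L^{(1)}\cup L^{(2)})$, because the component of $L$ through $p$ has linking number $\pm1$ with $m_p$ and so lifts to a \emph{single} circle. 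The anti-invariant part of $H^1$ is spanned by $\mu_j^{(1)*}-\mu_j^{(2)*}$ for the other components only, of dimension $|L|-1$; your injectivity conclusion survives, but the statement as written is false. Since the lemma only asserts an isomorphism of kernels, your dimension count assembles correctly once these two points are repaired.
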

\begin{proof} 
Let $H$ be the Hopf link.
The pair $(S^3, L^\natural)$ can be viewed as the connected
sum of $(S^3,L)$ and $(S^3, H)$ along a point on each of $L$ 
and $H$. To form the connected sum, we need to remove
small neighborhoods of the two points and glue the complements along 2-punctured spheres. We will use $B^3$ to denote the neighborhoods of the two points and use $(S^2-2\pt)$ to denote the 2-punctured sphere.  The Mayer-Vietoris sequence for the union of
$(S^3-L-B^3)$ and $(S^3-H-B^3)$ 
gives the follows long exact sequence
\begin{align}
\cdots&\to H^0(S^3-L-B^3;\ad \rho)\oplus H^0(S^3-H-B^3,\ad \rho_0) 
\to H^0(S^2-2\pt;\ad \rho)
\nonumber
\\
&\to H^1(S^3-L^\natural;\ad \rho^\natural)
 \to H^1(S^3-L-B^3;\ad \rho)\oplus H^1(S^3-H-B^3,\ad \rho_0) 
 \nonumber
 \\
&\to H^1(S^2-2\pt,\ad \rho)\to \cdots,
\label{eqn_long_exact_seq_1}
\end{align}
where  $\ad \rho_0$ is the local system on $S^3-H$ that maps the two generators of $\pi_1(S^3-H)\cong\bZ^2$ to
$\text{diag}(1,-1,-1)$ and $\text{diag}(-1,-1,1)$ respectively, and we use the same notation of a local system to denote its restrictions.
Notice that the inclusion maps
\begin{align*}
	S^3-H-B^3 &\hookrightarrow S^3-H,\\
	S^3-L-B^3 &\hookrightarrow S^3-L,
\end{align*}
are homotopy equivalences, so they induce isomorphisms on cohomology.
Since $T^2$ is a deformation retract of $S^3-H$, we have
$$
 H^*(S^3-H,\ad \rho_0)\cong H^*(T^2,\ad \rho_0)=0
$$
by the K\"unneth formula. Therefore, \eqref{eqn_long_exact_seq_1} yields the following long exact sequence:
\begin{align*}
\cdots &\to H^0(S^3-L^\natural;\ad \rho^\natural)\to  
    H^0(S^3-L;\ad \rho) 
\to H^0(S^2-2\pt;\ad \rho)\\
&\to H^1(S^3-L^\natural;\ad \rho^\natural) 
 \to H^1(S^3-L;\ad \rho) 
\to H^1(S^2-2\pt,\ad \rho)\to \cdots.
\end{align*}
Since all the elements of $R^\natural(L,p)$ are irreducible (this follows from, for example, the definition of $\rho^\natural$ in the proof of Lemma \ref{lem_R=R^natural}), we have
$H^0(S^3-L^\natural;\ad \rho^\natural)=0$. Since $\rho$ is reducible and non-trivial, 
$$
H^0(S^3-L;\ad \rho)\cong \bR.
$$
Since $(S^2-2\pt)$ retracts to a meridian of $L$,
it is straightforward to calculate that
$$
H^0(S^2-2\pt,\ad \rho)\cong \bR,~~
H^1(S^2-2\pt,\ad \rho)\cong \bR.
$$
Hence we obtain the following exact sequence
$$
0\to H^1(S^3-L^\natural;\ad \rho^\natural) 
 \to H^1(S^3-L;\ad \rho) 
\to H^1(S^2-2\pt,\ad \rho).
$$
The last map in the above sequence is a component of \eqref{eq_hessian_I}, therefore the kernels of \eqref{eq_hessian_I_natural} and
\eqref{eq_hessian_I} are the isomorphic.
\end{proof}

Let $\Delta_L(t)$ be the (single-variable) Alexander polynomial of $L$ under a given orientation. Then $|\Delta_L(-1)|$ is independent of the orientation of $L$. The value of $|\Delta_L(-1)|$ is called the \emph{determinant} of $L$, which will be denoted by $\det(L)$.

\begin{Proposition}\label{prop_degenerate_rep}
Let $L\subset S^3$ be a link, let $p\in L$, and suppose that $\rho\in R(L,p)$ is abelian. Then 
$\rho^\natural$ gives a non-degenerate critical point of the Chern-Simons functional if and only if
$\det(L)\neq 0$.
\end{Proposition}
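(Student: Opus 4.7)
The plan is to apply Lemma \ref{lem_hessian_I} to reduce the question to a direct cohomological computation, decompose the local system $\ad\rho$ into pieces that can be analyzed individually, and connect the remaining twisted cohomology to the determinant via the double branched cover.

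First I analyze $\ad\rho$. Since $\rho$ is abelian with $\rho(m_p)=\bfi$ and every meridian is traceless, every meridian lies in the maximal torus through $\bfi$ and has trace $0$; hence each meridian maps to $\pm\bfi$. The adjoint action of $\pm\bfi$ on $\mathfrak{su}(2)\cong \bR\bfi\oplus\bR\bfj\oplus\bR\bfk$ is the involution $\mathrm{diag}(1,-1,-1)$, independent of the sign. Therefore the local system splits as a direct sum of orthogonal sub-local-systems $\ad\rho \cong \underline{\bR}\oplus \epsilon\oplus\epsilon$, where $\underline{\bR}$ is the trivial rank-one local system along the $\bfi$-axis and $\epsilon$ is the rank-one real local system whose every meridional monodromy equals $-1$.

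Next I compute the kernel of the restriction map in Lemma \ref{lem_hessian_I} summand by summand. On the trivial summand, the meridians form a free basis of $H_1(S^3-L;\bZ)\cong \bZ^{|L|}$, so the restriction $H^1(S^3-L;\bR)\to H^1(\underline{m};\bR)$ is an isomorphism $\bR^{|L|}\to \bR^{|L|}$ and contributes no kernel. On each $\epsilon$-summand, the local system restricted to a meridian $S^1$ has monodromy $-1$; the associated cellular cochain complex is $\bR\xrightarrow{-2}\bR$, so $H^*(S^1;\epsilon|_{m_i})=0$. Therefore $H^1(\underline{m};\epsilon)=0$ and the whole of $H^1(S^3-L;\epsilon)$ sits in the kernel. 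Combining the two summands, non-degeneracy of $\rho^\natural$ is equivalent to $H^1(S^3-L;\epsilon)=0$.

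Finally I connect $H^1(S^3-L;\epsilon)$ to $\det(L)$ via the double cover. Let $X_2\to S^3-L$ be the connected unbranched double cover classified by the sign homomorphism $\pi_1(S^3-L)\to\bZ/2$ that sends each meridian to $1$, and let $\Sigma_2$ denote the double branched cover of $S^3$ along $L$. The eigenspace decomposition of the deck involution $\sigma$ gives
$$H^*(X_2;\bR)\cong H^*(S^3-L;\bR)^{+\sigma}\oplus H^*(S^3-L;\epsilon)^{-\sigma}.$$
The manifold $\Sigma_2$ is obtained from $X_2$ by gluing one solid torus $V_i$ along each preimage torus $\tilde T_i\subset\partial X_2$. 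A Mayer--Vietoris computation shows that the connecting map $H_1(\Sigma_2;\bR)\to H_0(\bigsqcup \tilde T_i;\bR)$ vanishes, and that the surjection $H_1(X_2;\bR)\twoheadrightarrow H_1(\Sigma_2;\bR)$ has kernel precisely $\mathrm{span}\{[\tilde\mu_i]\}$. These classes $[\tilde\mu_i]$ are $\sigma$-invariant and project under $\pi_*$ to $2[\mu_i]$, which form a basis of $H_1(S^3-L;\bR)$; hence $\mathrm{span}\{[\tilde\mu_i]\}$ is exactly the invariant subspace $H_1(X_2;\bR)^{+\sigma}$. Therefore $H_1(\Sigma_2;\bR)\cong H_1(X_2;\bR)^{-\sigma}\cong H_1(S^3-L;\epsilon)$, and by universal coefficients this has the same dimension as $H^1(S^3-L;\epsilon)$. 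Invoking the classical fact $\det(L)=|H_1(\Sigma_2(L);\bZ)|$ whenever the right-hand side is finite --- equivalently $\det(L)\neq 0$ if and only if $b_1(\Sigma_2;\bR)=0$ --- completes the proof.

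The main bookkeeping obstacle is the Mayer--Vietoris step: one must verify that when $\Sigma_2$ is built from $X_2$ by attaching solid tori, precisely the $|L|$-dimensional $+1$-eigenspace of $\sigma$ gets killed, regardless of the parities of the pairwise linking numbers of the components (which only affect the internal structure of each $\tilde T_i$, not the rank of what is killed rationally).
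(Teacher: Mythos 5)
Your proof is correct and follows essentially the same route as the paper's: decompose $\ad\rho$ into the trivial summand plus two copies of the sign local system, observe via Lemma \ref{lem_hessian_I} that non-degeneracy is equivalent to the vanishing of $H^1(S^3-L;\epsilon)$, identify this group with $H_1(\Sigma_2(L);\bR)$ using the double branched cover, and invoke the relation between $\det(L)$ and $H_1(\Sigma_2(L);\bZ)$. The only differences are cosmetic — you phrase the double-cover step as an eigenspace decomposition plus a Mayer--Vietoris for attaching solid tori, whereas the paper uses the transfer exact sequence and the long exact sequence of the pair $(\Sigma(L),\Sigma(L)-N(\widetilde L))$, and it justifies the final determinant fact via the Goeritz presentation $A+A^t$ of $H_1(\Sigma(L);\bZ)$.
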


\begin{proof}

We have 
$$
H^1(S^3-L, \ad \rho)\cong 
H^1(S^3-L, \bR)\oplus H^1(S^3-L, \widetilde{\bR}^{\oplus 2}) 
\cong \bR^{|L|}\oplus H^1(S^3-L, \widetilde{\bR}^{\oplus 2})
$$
and
$$
H^1(\underline{m};\ad \rho)\cong 
H^1(\underline{m}, \bR)\oplus H^1(\underline{m}, \widetilde{\bR}^{\oplus 2}) 
\cong \bR^{|L|},
$$
where $\widetilde{\bR}$ denotes the local system whose holonomy around each meridian of $L$ is $-1$. Using the above decompositions, it is clear that the kernel of
the map \eqref{eq_hessian_I} is isomorphic to
$
H^1(S^3-L, \widetilde{\bR}) ^{\oplus 2}.
$
Therefore by Lemma \ref{lem_hessian_I},
$\rho^\natural$ is  degenerate if and only if
$
H^1(S^3-L, \widetilde{\bR})\neq 0.
$

Since
$$
H^1(S^3-L, \widetilde{\bR})\cong \Hom(H_1(S^3-L, \widetilde{\bR}),\bR),
$$
it suffices to show that 
$$
\dim H_1(S^3-L, \widetilde{\bR}) >0
$$
if and only if $\det(L)=0$.

Let $\pi:\Sigma(L)\to S^3$ be the branched double cover of $S^3$ along $L$. We  use $\widetilde{S^3-L}$ to denote $\pi^{-1}(S^3-L)$, use $\widetilde{L}$ to denote $\pi^{-1}(L)$, and use $N(\widetilde{L})$ to denote a tubular neighborhood of $\widetilde{L}$. 

From the long exact sequence for $(\Sigma(L),\Sigma(L)-N(\widetilde{L}))$ and excision, we have the
following commutative diagram:
$$
\xymatrix{
H_2(N(\widetilde{L}), \partial N(\widetilde{L});\bR)  \ar[r]^{\quad\iota} \ar[dr]^{\cong}    
&  H_1(\widetilde{S^3-L};\bR )    \ar[d]  \ar[r]   
&  H_1(\Sigma(L);\bR)    \ar[r]
&  H_1(N(\widetilde{L}), \partial N(\widetilde{L});\bR) \ar[d]^{\cong}\\    
 &    H_1({S^3-L};\bR )         & & 0
}
$$
where the first row is exact.
The triangle in the above diagram implies that the map $\iota$ is injective, therefore 
$$
\dim H_1(\widetilde{S^3-L};\bR )= |L|+b_1(\Sigma(L)).
$$
According to \cite[Example 3H.3]{Hatcher}, we have the following 
long exact sequence 
\begin{align*}
H_2(\widetilde{S^3-L};\bR) &\twoheadrightarrow H_2({S^3-L};\bR)
\to H_1(S^3-L;\widetilde{\bR})  \\
&\to
H_1(\widetilde{S^3-L};\bR) \twoheadrightarrow H_1({S^3-L};\bR),
\end{align*}
where the surjectivity of the first arrow follows from 
the fact that  $H_2({S^3-L};\bR)$ is generated by 
the components of $\partial N(L)$, and the surjectivity of the last arrow follows from the previous
commutative diagram. Hence we obtain
$$
\dim H_1(S^3-L;\widetilde{\bR})=b_1(\Sigma(L)).
$$

By \cite[Theorem 9.1]{Lickorish}, if $A$ is the Seifert matrix of $L$, then $H_1(\Sigma(L);\bZ)$ is a finitely generated abelian group presented by the matrix $A+A^t$. Therefore
$b_1(\Sigma(L))>0$ if and only if $\det(A+A^t)=0$. Since $|\det(A+A^t)|=|\Delta_L(-1)|=\det(L)$, the proposition is proved.
\end{proof}

The following result is proved by the same argument as  \cite[Theorem 10]{Klassen}. 
\begin{Proposition}\label{prop_binary_dihedral_rep}
Suppose $L$ is a link in $S^3$ and $p\in L$ is a base point. If $\det(L)= 0$, then 
$R(L,p)$ contains 
infinitely many irreducible representations.
\end{Proposition}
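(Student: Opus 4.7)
The plan is to adapt Klassen's argument to links by constructing, when $\det(L)=0$, uncountably many binary dihedral representations in $R(L,p)$ and noting that only finitely many are reducible. Fix an oriented planar diagram of $L$ containing $m_p$ as one of its arcs, and look for $\rho$ with $\rho(m_p)=\bfi$ whose meridian images all lie in the $\bfi\bfj$--plane of unit imaginary quaternions: parametrize
\[
\rho(\mu_x)=\cos(\alpha_x)\,\bfi+\sin(\alpha_x)\,\bfj
\]
for each arc $x$, with $\alpha_{m_p}=0$. Since conjugation by a unit imaginary quaternion acts on $\mathfrak{su}(2)\cong\bR^3$ as the $\pi$--rotation about its axis, the Wirtinger relation $\mu_b=\mu_y\mu_a\mu_y^{-1}$ at a crossing with over-arc $y$ and under-arcs $a,b$ becomes the linear equation
\[
\alpha_a+\alpha_b-2\alpha_y\equiv 0\pmod{2\pi}.
\]
Thus binary dihedral representations of this shape are in bijection with $\Hom(A_L,\bR/2\pi\bZ)$, where $A_L$ is the abelian group with generators $\{\alpha_x\}$ modulo $\alpha_{m_p}$ and the linear crossing relations above.

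The next step is a standard Fox--calculus identification: the presentation matrix of $A_L$ is the Alexander matrix of $L$ evaluated at $t=-1$, and $\rank_\bZ(A_L)=b_1(\Sigma(L))$ exactly as in the computation with the Seifert form $A+A^t$ in the proof of Proposition \ref{prop_degenerate_rep}. Since we are assuming $\det(L)=0$, that proposition yields $b_1(\Sigma(L))>0$, so $\Hom(A_L,\bR/2\pi\bZ)$ is a compact Lie group of positive dimension and therefore contains uncountably many elements, each producing a distinct $\rho\in R(L,p)$.

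Finally, a binary dihedral representation of this form is reducible if and only if its entire image lies in a single maximal torus, which forces every $\alpha_x$ to be a multiple of $\pi$; equivalently, the corresponding character of $A_L$ lies in the finite $2$--torsion subgroup $\Hom(A_L,\bZ/2\bZ)$. Discarding this finite subset leaves infinitely many irreducible representations, which finishes the proof. The only mild obstacle is the identification $\rank_\bZ(A_L)=b_1(\Sigma(L))$; this is the key computation in Klassen's original proof for knots, and it transfers to links without change once one writes down the Alexander matrix from a Wirtinger presentation.
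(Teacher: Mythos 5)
Your proposal is correct and follows essentially the same route as the paper: both restrict to binary dihedral representations with meridian images on the $\bfi\bfj$--circle, observe that the Wirtinger relations become the linear system given by the Alexander matrix at $t=-1$ over $\bR/2\pi\bZ$, and discard the finitely many abelian solutions. The only cosmetic difference is that you detect the positive-dimensional solution torus via Fox's identification of $A_L$ with $H_1(\Sigma(L))$ and the $b_1(\Sigma(L))>0$ criterion from Proposition \ref{prop_degenerate_rep}, whereas the paper argues directly that the square presentation matrix has determinant $\pm\det(L)=0$.
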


\begin{proof}
	Fix an oriented planar diagram for $L$. If $L$ is the disjoint union of a sublink and an unknot, then the desired property is obvious.  From now on, we assume that $L$ does not split as the disjoint union of a sublink and an unknot, so the number of arcs is equal to the number of crossings in the diagram.
	
	Recall that by the Wirtinger presentation, an $\SU(2)$--representation of $\pi_1(S^3-L)$ can be specified by the images of meridians around all the arcs.
Let $D$ be the subset of $\SU(2)$ defined by 
$$
D := \{\cos\theta \, \bfi + \sin\theta \, \bfj \in\SU(2) |\theta\in \bR/2\pi\bZ\}.
$$
We study $\SU(2)$--representations of $\pi_1(S^3-L)$ such that the images of meridians around the arcs are all contained in $D$.  Such a representation is given by a map from the set of arcs to $D\cong \bR/2\pi\bZ$. 

Label the arcs in the diagram as $a_1,\cdots,a_N$, and the let $\theta_i\in \bR/2\pi\bZ$ be the image of $a_i$ under the map. For a crossing where the arc above is $a_i$ and the arcs below are $a_j$ and $a_k$, the compatibility condition is given by 
$$
2\theta_i - \theta_j - \theta_k = 0 \in \bR/2\pi\bZ.
$$

Since there are $N$ crossings, we have $N$ equations. Recall that any one of the relations in the Wirtinger presentation can be omitted from the presentation and the group remains the same, so we remove one of the equations and obtain $N-1$ equations. Suppose $a_N$ is the arc containing $p$, then the definition of $R(L,p)$ requires $\theta_N=0$. Hence the system is given by an $(N-1)\times (N-1)$ matrix. This matrix is the same as the Alexander matrix obtained from the Wirtinger presentation evaluated at $t=-1$. Therefore, if $\det(L)=0$, the system of equations is given by a matrix with determinant zero, and hence it has infinitely many solutions. 
Since there are only finitely many abelian elements  in $R(L,p)$, we conclude that $R(L,p)$ contains 
infinitely many irreducible elements.
\end{proof}

By Propositions \ref{prop_degenerate_rep} and \ref{prop_binary_dihedral_rep}, if all the elements in $R(L,p)$ are abelian, then $\det(L)\neq 0$, and hence every element of $R^\natural(L,p)$ represents a non-degenerate critical point of the Chern-Simons functional.

\begin{Proposition}\label{prop_removing_component_rank_change}
Suppose $L=K_1\cup\cdots\cup K_n$ is an $n$--component link in $S^3$ and
$p\in L$ is a base point which does not lie on $K_1$. Then we have
$$
\dim_\bC\II^\natural(L,p;\bC)\ge 2\dim_\bC \II^\natural(L-K_1,p;\bC),
$$
and
$$
\dim_\bC\II^\natural(L,p;\bC)\ge 2^{n-1}.
$$
\end{Proposition}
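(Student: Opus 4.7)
The second inequality is an immediate consequence of the first by induction on $n$: the base case $n=1$ is the well-known computation $\dim\II^\natural(\mathrm{unknot},p;\bC)=1$ from \cite{KM:Kh-unknot}, and each inductive step doubles the lower bound.

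For the first inequality $\dim\II^\natural(L,p;\bC)\geq 2\dim\II^\natural(L-K_1,p;\bC)$, my plan is to construct a long exact sequence of the shape
\[
\cdots \to \II^\natural(L-K_1,p;\bC)\otimes V \to \II^\natural(L,p;\bC) \to Z \to \cdots,
\]
in which $V$ is a $2$-dimensional vector space arising from a meridian contribution of $K_1$; rank considerations then yield the desired factor of $2$. The natural tool to produce such a sequence is the unoriented skein exact triangle for $\II^\natural$, applied at a crossing involving $K_1$ in a planar diagram: by repeated applications of the triangle (combined with crossing changes on $K_1$) one reduces to the case where $K_1$ is a split unknotted component, at which point a disjoint-union formula of the form $\II^\natural((L-K_1)\sqcup U,p;\bC)\cong \II^\natural(L-K_1,p;\bC)\otimes\bC^2$ furnishes the factor of $2$ in the base case. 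Alternatively, and more in the spirit of the representation-theoretic setup built earlier in this section, one could count critical points of the (perturbed) Chern-Simons functional directly: each $\rho\in R(L-K_1,p)$ is expected to extend to two elements of $R(L,p)$ by sending the meridian of $K_1$ to $\pm\bfi$, and Proposition \ref{prop_degenerate_rep} ensures non-degeneracy when $\det(L)\neq 0$; the degenerate case $\det(L)=0$ is then addressed via perturbation, using the infinite family of irreducible representations from Proposition \ref{prop_binary_dihedral_rep}.

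The main obstacle in either approach is propagating the factor of $2$ through the full reduction. In the skein-triangle approach, smoothings at a crossing can merge two components of $L$ into one (or split one into two), so one must track the number of components carefully and confirm that each step contributes the correct dimension bound; in particular, the base point $p$ must be preserved by every move, which is a genuine constraint since $p$ lies outside $K_1$. In the representation-counting approach, one must verify that each extension of a critical point from $L-K_1$ to $L$ is non-degenerate and produces a genuinely distinct generator in $\II^\natural(L,p;\bC)$ rather than being killed by a differential---a parity argument using the mod-$2$ grading, noting that all abelian meridian-traceless representations share the same parity, should accomplish this.
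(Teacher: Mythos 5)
There is a genuine gap: neither of your two proposed routes to the first inequality works as sketched, and the paper's actual mechanism — which you do not mention — is a local coefficient argument. The paper equips $K_1$ with the local system $\Gamma_1$ over $\mathcal{R}=\bC[t,t^{-1}]$; by \cite[Corollary 3.3]{XZ:forest} the rank of $\II^\natural(L,p;\Gamma_1)$ equals that of $\II^\natural((L-K_1)\sqcup U,p;\Gamma_U)$ for a split unknot $U$, an excision/K\"unneth argument identifies the latter with $2\rank_{\mathcal{R}}\II^\natural(L-K_1,p;\mathcal{R})$, and the universal coefficient theorem (specializing $t=1$) converts this into the lower bound $\dim_\bC\II^\natural(L,p;\bC)\ge 2\dim_\bC\II^\natural(L-K_1,p;\bC)$. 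Your skein-triangle route cannot replace this: the unoriented skein triangle only gives two-sided bounds of the form $|\dim A-\dim B|\le\dim C\le\dim A+\dim B$, and there is no way to propagate a \emph{lower} bound with a factor of $2$ through a sequence of crossing changes and resolutions that unknot and split off $K_1$. Your representation-counting route fails for two reasons. First, an irreducible $\rho\in R(L-K_1,p)$ does not "extend" to $R(L,p)$ by assigning $\pm\bfi$ to the meridian $\mu_1$ of $K_1$: the meridian $\mu_1$ is not a free generator of $\pi_1(S^3-L)$ (under the surjection $\pi_1(S^3-L)\to\pi_1(S^3-(L-K_1))$ it dies), so only abelian representations extend in this way. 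Second, counting critical points bounds the size of the chain complex, hence gives an \emph{upper} bound on Floer homology; your parity argument to rule out differentials only applies when all elements of $R(L,p)$ are abelian, which is the special situation of Proposition \ref{prop_abelian_rep_imply_minimal_I_Z}, not the general case.

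A smaller but real error: the base case of your induction for the second inequality is not the unknot. After removing $n-1$ components one is left with an arbitrary knot $K_n$, so one needs $\dim_\bC\II^\natural(K_n,p;\bC)\ge 1$ for every knot, which the paper obtains from $\II^\natural(K_n,p;\bC)\cong\KHI(K_n)$ \cite[Proposition 1.4]{KM:Kh-unknot} together with the nonvanishing theorem $\KHI(K_n)\neq 0$ \cite[Proposition 7.16]{KM:suture}.
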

\begin{remark}
\label{rmk_sublink_min_I}
	As a corollary, if $L$ has minimal $\II^\natural$ (see Definition \ref{def_minimal_I_natural}), then every sublink of $L$ also have minimal $\II^\natural$.
\end{remark}

\begin{proof}
We use $\mathcal{R}$ to denote
the ring $\bC[t,t^{-1}]$.
Let $\Gamma_{1}$ be the 
local system associated with $K_1$ (see \cite[Section 3.9]{KM:YAFT} or \cite[Section 3]{XZ:forest} for the definition in detail). 
Let $U$ be an unknot that is contained in a solid 3-ball disjoint from $L$, Let $\Gamma_{U}$ be the 
local system associated with $U$. Then we have
\begin{align}
\rank_\mathcal{R} \II^\natural(L,p;\Gamma_1)&=\rank_\mathcal{R} 
\II^\natural((L-K_1)\sqcup U,p;\Gamma_U) 
\label{eqn_local_sys_compute_1}
\\
&=  \rank_{\mathcal{R}} \II^\sharp(U;\Gamma_U) \cdot \rank_\mathcal{R} 
\II^\natural(L-K_1 ,p;\mathcal{R}) 
\label{eqn_local_sys_compute_2}\\
&= 2\rank_\mathcal{R} 
\II^\natural(L-K_1 ,p;\mathcal{R}) 
\label{eqn_local_sys_compute_3}
\\
&=2 \dim_\bC
\II^\natural(L-K_1 ,p;\bC)
\label{eqn_local_sys_compute_4}
\end{align}
where \eqref{eqn_local_sys_compute_1} follows from \cite[Corollary 3.3]{XZ:forest}, and  \eqref{eqn_local_sys_compute_2}
 follows from \cite[Proposition 5.8]{KM:Kh-unknot}. 
(Actually, \cite[Proposition 5.8]{KM:Kh-unknot} proved a K\"unneth formula for $\II^\sharp$, but here we are taking
a product of $\II^\natural(\cdot)$ and $\II^\sharp(\cdot)$. So \eqref{eqn_local_sys_compute_2} does not follow directly from the statement of \cite[Proposition 5.8]{KM:Kh-unknot}, but the same excision argument works here without difficulty.) The local system $\mathcal{R}$ in \eqref{eqn_local_sys_compute_3} is the trivial local system with coefficient ring $\mathcal{R}$, and Equation \eqref{eqn_local_sys_compute_4} follows from the universal coefficient system by viewing $\mathcal{R}$ as a free module over $\bC$.

The trivial local system with $\bC$--coefficients can be recovered from $\Gamma_1$ by taking a tensor product with  $\mathcal{R}/(t-1)$ over $\mathcal{R}$ on the chain level.
By the universal coefficient theorem,
$$
\rank_\mathcal{R} \II^\natural(L,p;\Gamma_1)\le 
\dim_\bC 
\II^\natural(L,p;\Gamma_1\otimes_{\mathcal{R}}\mathcal{R}/(t-1))=
 \dim_\bC 
\II^\natural(L,p;\bC).
$$
Therefore, we have
$$
2\dim_\bC 
\II^\natural(L-K_1 ,p;\bC)\le  \dim_\bC 
\II^\natural(L,p;\bC).
$$

To prove the second part of the statement, assume with out loss of generality 
that $p\in K_n$. By induction, we have
$$
 \dim_\bC 
\II^\natural(L,p;\bC)\ge 2^{n-1}\dim_\bC  
\II^\natural(K_n,p;\bC).
$$
On the other hand, by \cite[Proposition 1.4]{KM:Kh-unknot}, we have
$$
\dim \II^\natural(K_n,p;\bC)= \dim \KHI(K_n),
$$
where $\KHI$ is the instanton knot homology introduced in \cite{KM:suture}.
By \cite[Proposition 7.16]{KM:suture}, we know that
$\KHI(K_n)$ is non-zero, so the desired result is proved.
\end{proof}

\begin{Proposition}\label{prop_abelian_rep_imply_minimal_I_Z}
If $\pi_1(S^3-L)$ does not have any irreducible meridian-traceless $\SU(2)$--representations, then
\begin{equation}
\label{eqn_no_rep_imply_minimal_I_natural_Z}
	\II^\natural(L,p;\bZ)\cong \bZ^{2^{|L|-1}}
\end{equation}
for all $p\in L$.
\end{Proposition}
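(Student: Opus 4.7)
The strategy is Morse-theoretic: show every element of $R^\natural(L,p)$ is a non-degenerate critical point of the Chern-Simons functional, count these critical points explicitly, and combine with Proposition \ref{prop_removing_component_rank_change} to pin down the rank of $\II^\natural(L,p;\bZ)$.

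First, I would analyze $R(L,p)$ under the hypothesis. Any irreducible meridian-traceless representation $\rho\colon\pi_1(S^3-L)\to\SU(2)$ can be conjugated so that $\rho(m_p)=\bfi$ (since the traceless elements form a single conjugacy class in $\SU(2)$), and conjugation preserves both irreducibility and the meridian-traceless condition. The hypothesis therefore forces every element of $R(L,p)$ to be abelian. An abelian $\rho\in R(L,p)$ has image in the centralizer of $\bfi$, namely the maximal torus $\{\cos\theta+\bfi\sin\theta\}$, and the traceless condition further restricts each meridian image to $\pm\bfi$. Since meridians of the same component are conjugate in $\pi_1(S^3-L)$, and since an abelian representation factors through $H_1(S^3-L;\bZ)\cong\bZ^{|L|}$, such a representation is uniquely determined by a sign $\epsilon_i\in\{\pm 1\}$ for each component $K_i$, subject only to $\epsilon_j=+1$ where $K_j$ is the component containing $p$. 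Hence $|R(L,p)|=2^{|L|-1}$.

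Next, I would invoke Propositions \ref{prop_degenerate_rep} and \ref{prop_binary_dihedral_rep}. Proposition \ref{prop_binary_dihedral_rep} forces $\det(L)\neq 0$, since otherwise $R(L,p)$ would contain irreducibles. Proposition \ref{prop_degenerate_rep} then says that every abelian $\rho\in R(L,p)$ corresponds via Lemma \ref{lem_R=R^natural} to a non-degenerate critical point $\rho^\natural\in R^\natural(L,p)$. Consequently $R^\natural(L,p)$ consists of exactly $2^{|L|-1}$ non-degenerate critical points, $\II^\natural(L,p;\bZ)$ can be computed directly from these without any perturbation, and the underlying Floer chain complex is a free $\bZ$-module of rank $2^{|L|-1}$.

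Finally, I would compare with the rank bound. Since the chain complex has rank $2^{|L|-1}$, tensoring with $\bC$ gives $\dim_\bC\II^\natural(L,p;\bC)\le 2^{|L|-1}$, while the lower bound $\dim_\bC\II^\natural(L,p;\bC)\ge 2^{|L|-1}$ from Proposition \ref{prop_removing_component_rank_change} forces equality; this equality occurs only when the differential on the complex tensored with $\bC$ vanishes identically. Since the integral differential is a matrix of integers that vanishes after tensoring with $\bC$, it already vanishes over $\bZ$, so $\II^\natural(L,p;\bZ)$ equals the chain group and is isomorphic to $\bZ^{2^{|L|-1}}$. The main point to handle carefully is the counting: one must verify that the Wirtinger relations impose no obstruction on the choice of signs $\epsilon_i$ for abelian representations, and that the bijection in Lemma \ref{lem_R=R^natural} really transfers the count from $R(L,p)$ to $R^\natural(L,p)$. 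Beyond this bookkeeping, the argument is a direct application of the already-established results of Section \ref{sec_singular_instanton}.
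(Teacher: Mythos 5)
Your proposal is correct and follows essentially the same route as the paper's proof: identify the $2^{|L|-1}$ abelian elements of $R(L,p)$ as the entire critical set, use Propositions \ref{prop_binary_dihedral_rep} and \ref{prop_degenerate_rep} (via Lemma \ref{lem_R=R^natural}) to get non-degeneracy, and play the resulting rank-$2^{|L|-1}$ chain complex against the lower bound of Proposition \ref{prop_removing_component_rank_change} to force the differential to vanish. The paper's version is terser but identical in substance; your extra care with the counting of abelian representations and with passing from vanishing over $\bC$ to vanishing over $\bZ$ is fine.
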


\begin{remark}
\label{rmk_no_rep_imply_minimal_I_natural_Z}
	By the universal coefficient theorem, \eqref{eqn_no_rep_imply_minimal_I_natural_Z} implies
	$$
	\II^\natural(L,p;\bC)\cong \bC^{2^{|L|-1}}.
	$$
\end{remark}
\begin{proof}
Suppose $L$ has $n$ components, then the set $R(L,p)$ contains $2^{n-1}$ abelian elements.
By the assumption, these are all the elements of $R(L,p)$.
 By Lemma \ref{lem_R=R^natural}, Proposition \ref{prop_degenerate_rep}, and Proposition 
\ref{prop_binary_dihedral_rep}, every element of $R(L,p)$ represents a non-degenerate critical point of the Chern-Simons functional. Therefore, the Floer chain complex $C\II^\natural (L,p;\bZ)$
is generated by $2^{n-1}$ elements. On the other hand, we have
$$
\dim_\bC\II^\natural (L,p;\bC)\ge 2^{n-1}
$$
by Proposition \ref{prop_removing_component_rank_change}.
Therefore, the Floer differentials on $C\II^\natural (L,p;\bZ)$ is zero, and we have
$$
\II^\natural(L,p;\bZ)\cong \bZ^{2^{n-1}}.
\phantom\qedhere\makeatletter\displaymath@qed
$$
\end{proof}

The next proposition establishes a relation between $\II^\natural(L,p)$ and the Alexander polynomials of $L$. Suppose $f$ is a (possibly multi-variable) polynomial, we use $\|f\|$ to denote the sum of the absolute values of the coefficients of $f$.

\begin{Proposition}\label{prop_I>Alexander}
Suppose $L\subset S^3$ is a link with $n\ge 2$ components, and fix an orientation of $L$. Let $\Delta_L$ denote the single-variable Alexander polynomial of $L$, let $\tilde{\Delta}_L$ denote the multi-variable Alexander polynomial of $L$. Then we have
\begin{equation}
\label{eqn_I>Alexander}
	2^{n-1}\dim_\bC \II^\natural (L,p;\bC)\ge 
\|(1-x_1)(1-x_2)\cdots(1-x_n)\tilde{\Delta}_L(x_1,x_2,\cdots,x_n)  \|
\end{equation}
and
\begin{equation}
\label{eqn_I>single_v_Alexander}
2^{n-1}\dim_\bC\II^\natural (L,p;\bC)\ge 
\|(1-x)^{n-1}\Delta_L(x)  \| \ge 2^{n-1} \det(L).
\end{equation}
\end{Proposition}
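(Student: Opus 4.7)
My plan is to use a multi-variable local system. I would introduce $\Gamma$ on $S^3 - L$ over $\mathcal{R} := \bC[x_1^{\pm 1}, \ldots, x_n^{\pm 1}]$, where $x_i$ is the monodromy around the meridian of $K_i$. The resulting Floer chain complex $C\II^\natural(L, p; \Gamma)$ is a free finitely generated $\mathcal{R}$-complex whose specialization at $x_i = 1$ recovers $C\II^\natural(L, p; \bC)$. The universal coefficient argument used in the proof of Proposition \ref{prop_removing_component_rank_change} then yields
\[
\dim_\bC \II^\natural(L, p; \bC) \;\ge\; \rank_\mathcal{R} \II^\natural(L, p; \Gamma).
\]

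The core computation is the identification of the equivariant Euler characteristic
\[
\chi^{\mathrm{eq}}\bigl(\II^\natural(L, p; \Gamma)\bigr) \;\doteq\; \tfrac{1}{2^{n-1}}(1-x_1)(1-x_2)\cdots(1-x_n)\tilde{\Delta}_L(x_1, \ldots, x_n)
\]
up to a unit in $\mathcal{R}$, paralleling the classical identity relating link Floer homology to the multi-variable Alexander polynomial. I would prove this by verifying a skein exact triangle for $\II^\natural$ with multi-variable local coefficients that matches the Conway-type skein relation for $\tilde{\Delta}_L$, together with induction on the crossing number, whose base cases are unknots, Hopf links, and unlinks. A chain-level $L^1$-norm inequality (using the $\bZ^n$-weight carried by each chain generator and the multi-graded decomposition of the chain complex over $\mathcal{R}$) would then give $\rank_\mathcal{R} \II^\natural(L, p; \Gamma) \ge \|\chi^{\mathrm{eq}}\|$, which combined with the first inequality yields \eqref{eqn_I>Alexander} after multiplying by the normalization factor $2^{n-1}$.

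Inequality \eqref{eqn_I>single_v_Alexander} then follows from \eqref{eqn_I>Alexander} by diagonal specialization $x_i \mapsto x$. The Torres formula $(1-t)\tilde{\Delta}_L(t, \ldots, t) \doteq \Delta_L(t)$ (valid for $n \ge 2$) gives
\[
(1-x_1)\cdots(1-x_n)\tilde{\Delta}_L(x_1, \ldots, x_n)\big|_{x_i = x} \;=\; (1-x)^n \tilde{\Delta}_L(x, \ldots, x) \;\doteq\; \pm(1-x)^{n-1}\Delta_L(x),
\]
and since collapsing distinct variables into one can only combine (and therefore not increase) coefficients in absolute value, $\|(1-x_1)\cdots(1-x_n)\tilde{\Delta}_L\| \ge \|(1-x)^{n-1}\Delta_L(x)\|$. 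Finally, $\|P(x)\| \ge |P(-1)|$ for any $P \in \bC[x^{\pm 1}]$, and $|(1-(-1))^{n-1}\Delta_L(-1)| = 2^{n-1}\det(L)$, so $\|(1-x)^{n-1}\Delta_L(x)\| \ge 2^{n-1}\det(L)$.

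The main obstacle is the equivariant Euler characteristic identification together with the associated chain-level $L^1$-norm inequality. Setting up the multi-variable local system with correct signs and normalizations, verifying that the singular instanton skein triangle matches the multi-variable Alexander skein relation, and extracting a clean $L^1$-norm bound from equivariant Euler characteristic data (rather than merely the integer-valued Euler characteristic) are all delicate. The $2^{n-1}$ normalization, which arises from the interplay of the $2^{n-1}$ abelian representations in $R(L,p)$ with the multi-graded structure of the Floer chain complex, must be tracked with particular care and is where the instanton-theoretic content of the bound is concentrated.
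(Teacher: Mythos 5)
Your opening step (universal coefficients over a local system, $\dim_\bC \II^\natural(L,p;\bC)\ge\rank_{\mathcal R}\II^\natural(L,p;\Gamma)$) is sound, and your derivation of \eqref{eqn_I>single_v_Alexander} from \eqref{eqn_I>Alexander} via the Torres formula and evaluation at $x=-1$ matches the paper. But the core of your argument --- the item you yourself flag as ``the main obstacle'' --- is not merely delicate; it is unavailable as stated. First, there is no chain-level $\bZ^n$-weight on the generators of a singular instanton Floer complex: the generators are (perturbed) critical points of the Chern--Simons functional, and the Alexander multi-grading in instanton theory is constructed only on \emph{homology}, via generalized eigenspace decompositions of surface operators, and only for the fully-earringed invariant $\KHI(L)$ of \cite{KM:Alexander}, not for $\II^\natural$. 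Without a grading preserved by the differential, the inequality $\rank_{\mathcal R}\ge\|\chi^{\mathrm{eq}}\|$ has no formal justification; an ungraded Euler characteristic of a free $\mathcal R$-complex is just an integer. Second, your proposed normalization $\chi^{\mathrm{eq}}\doteq\tfrac{1}{2^{n-1}}(1-x_1)\cdots(1-x_n)\tilde\Delta_L$ is not even integral in general (for the Hopf link it would have coefficients $\pm\tfrac12$), so it cannot be the graded Euler characteristic of any chain complex; this signals that the $2^{n-1}$ cannot live where you put it. Third, the multi-variable Alexander polynomial is not determined by a single Conway-type skein relation, and the unoriented skein triangle available for $\II^\natural$ does not match such a relation, so the proposed induction on crossing number does not get off the ground.

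The paper's route places each ingredient where it actually exists. The factor $2^{n-1}$ comes from an earring-removal argument: adding a meridional earring to each of $n-1$ components and applying the unoriented skein exact triangle of \cite[Section 6]{KM:Kh-unknot} once per earring shows $\dim\II(L\cup\bigcup m_i,\sum u_i)\le 2^{n-1}\dim\II^\natural(L,p)$, while the fully-earringed group is identified with $\KHI(L)$ by \cite[Proposition 5.1]{Xie-earring}. The Alexander-polynomial content is then supplied by citing \cite[Theorem 1.4]{LY_multi_Alexander}: $\KHI(L)$ carries a multi-grading whose graded Euler characteristics recover the coefficients of $(1-x_1)\cdots(1-x_n)\tilde\Delta_L$, whence $\dim\KHI(L)\ge\|(1-x_1)\cdots(1-x_n)\tilde\Delta_L\|$ by the usual ``total rank dominates the $L^1$-norm of the graded Euler characteristic'' argument applied degree by degree. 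If you want to salvage your outline, you would need to reroute it through $\KHI(L)$ and invoke (rather than reprove) the Li--Ye multi-grading theorem.
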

\begin{proof}
Suppose $L=K_1\cup\cdots \cup K_n$.
Without loss of generality, we assume $p\in K_n$. 
Pick a point $p_i \in K_i$
for each $1\le i \le n-1$. For all $i=1,\cdots,n$, let $m_i$ be a small meridian of $K_i$
around $p_i$, and let $u_i$ be an arc joining $m_i$  and $p_i$. The pair $(m_i,p_i)$
is called an earring.

We use an ``earring-removing'' argument from 
\cite[Section 3]{Xie-earring}.
Pick a crossing between $m_k$ and $K_k$ and apply 
the unoriented skein exact triangle \cite[Section 6]{KM:Kh-unknot}, we have
a 3-cyclic exact sequence
\begin{align*}
\cdots &\to\II(L\cup \bigcup_{k\le i \le n} m_i,\sum_{k\le i \le n} u_i;\bC)\to 
\II(L\cup \bigcup_{k+1\le i \le n} m_i,\sum_{k+1\le i \le n} u_i;\bC)\\
&\to \II(L\cup \bigcup_{k+1\le i \le n} m_i,\sum_{k+1\le i \le n} u_i;\bC) \to \cdots
\end{align*}
for every $k=1,\cdots,n-1$.
 The above exact sequence implies that
$$
\dim_\bC\II(L\cup \bigcup_{k\le i \le n} m_i,\sum_{k\le i \le n} u_i;\bC)
\le  
2\dim_\bC \II(L\cup \bigcup_{k+1\le i \le n} m_i,\sum_{k+1\le i \le n} u_i;\bC)
$$
for every $k=1,\cdots,n-1$. Therefore
$$
\dim_\bC \II(L\cup \bigcup_{1\le i \le n} m_i,\sum_{1\le i \le n} u_i;\bC)
\le  
2^{n-1}\dim_\bC \II(L\cup m_n,u_n;\bC)=2^{n-1}\dim_\bC \II^\natural(L,p;\bC) .
$$

On the other hand, we have
$$
\II(L\cup \bigcup_{1\le i \le n} m_i,\sum_{1\le i \le n} u_i;\bC)
\cong \KHI(L;\bC)
$$
by \cite[Proposition 5.1]{Xie-earring}, where $\KHI(L)$ is the instanton knot homology for $L$ defined in 
\cite{KM:Alexander}. Therefore,
\begin{equation}
\label{eqn_bound_I_from_KHI}
	2^{n-1}\dim_\bC \II^\natural(L;\bC)\ge \dim_\bC \KHI(L;\bC).
\end{equation}

According to \cite[Theorem 1.4]{LY_multi_Alexander}, $\KHI(L)$ can be
equipped with multi-gradings whose graded Euler characteristics recover the absolute values of the coefficients of
$$
(1-x_1)(1-x_2)\cdots(1-x_n)\tilde{\Delta}_L(x_1,x_2,\cdots,x_n).
$$
 Therefore, we have
\begin{align*}
2^{n-1}\dim_\bC \II^\natural(L;\bC)
& \ge \dim_\bC \KHI(L;\bC)
\\
& \ge \|(1-x_1)(1-x_2)\cdots(1-x_n)\tilde{\Delta}_L(x_1,x_2,\cdots,x_n) \|,
\end{align*}
and hence \eqref{eqn_I>Alexander} is proved. Inequality \eqref{eqn_I>single_v_Alexander} follows from  \eqref{eqn_I>Alexander} because
$$
\Delta_L(x) = (1-x)\tilde{\Delta}_L(x,x,\cdots,x).
\phantom\qedhere\makeatletter\displaymath@qed
$$
\end{proof}

\section{Topological properties from instantons}
\label{sec_top_properties}
This section obtains several topological properties for links with minimal $\II^\natural$ (see Definition \ref{def_minimal_I_natural}).  From now on, unless otherwise specified, all instanton homology groups are defined with $\bC$--coefficients, and the dimensions of instanton homology groups are computed over $\bC$. We will use $N(L)$ to denote the tubular neighborhood of a link $L$.

\subsection{Seifert surfaces of sublinks}
\begin{Proposition}
\label{prop_disjoint_from_Seifert_surface}
	Suppose $L$ is a link with minimal $\II^\natural$, and suppose $L$ is divided into the union of two sublinks $L=L'\cup L''$ with $L'\cap L''=\emptyset$ such that the following properties are satisfied:
	\begin{enumerate}
		\item $L'$ is the Hopf link or a connected sum of Hopf links, 
		\item $L'$ has a Seifert surface $S$ with genus zero, such that the algebraic intersection number of $S$ with every component of $L''$ is zero.
	\end{enumerate}
	Then $L''$ can be isotoped in $S^3-L'$ to be disjoint from $S$.
\end{Proposition}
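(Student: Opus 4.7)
The approach will be by contradiction via an excision argument. Suppose $L''$ cannot be isotoped inside $S^3 - L'$ to be disjoint from $S$. Choose a representative of the isotopy class with minimal geometric intersection number $|L'' \cap S|$. Since the algebraic intersection is zero along each component, the intersection points on any fixed component $J \subset L''$ pair up by sign, and in particular some component meets $S$ in at least two points of opposite sign that are adjacent along $J$.

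The plan is to convert this essential geometric intersection into a strict increase in the dimension of $\II^\natural(L;\bC)$, contradicting the minimal $\II^\natural$ hypothesis. Because $L'$ is a connected sum of Hopf links with a genus-zero Seifert surface, the sutured manifold obtained by cutting $S^3 - N(L')$ along $S$ has a very simple structure: topologically it is a union of $3$-balls glued along annuli, and one understands its (sutured) instanton Floer homology explicitly. Feeding $L''$ into this picture and applying the excision theorem of \cite{XZ:excision} along a surface isotopic to $S$ but arranged to meet $L''$ transversely in the minimal intersection pattern, one should be able to relate $\II^\natural(L;\bC)$ to the $\II^\natural$ of a modified link $\widetilde L$, obtained by tubing $L''$ across $S$ along arcs joining the paired intersection points.

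The quantitative heart of the argument is to show that the presence of even a single essential pair of intersections forces an extra factor of $2$ in the dimension estimate: informally, the tube that reconnects $L''$ across $S$ produces an additional Hopf-link-like component whose $\II^\natural$ doubles the rank, much as in Proposition \ref{prop_removing_component_rank_change}. Combined with the lower bound of that proposition, this yields $\dim_\bC \II^\natural(L;\bC) \geq 2 \cdot 2^{|L|-1}$, contradicting the minimal $\II^\natural$ assumption and forcing $L''$ to be isotopable off $S$.

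The hardest step will be making the excision precise. One must arrange the cutting surface and the base point $p$ so that both sides of the cut produce objects whose instanton invariants are tractable, and one must verify that a single essential pair of intersections contributes a strict dimension gain rather than a cancellation (tubing across $S$ could in principle link $L''$ with $L'$ in ways that shrink the homology). To control this, I would first reduce to the case where $L'$ is a single Hopf link by invoking the sublink-minimality statement of Proposition \ref{prop_sublink_minimal_I}, and then analyze the tube locally in the thickened-torus complement of the Hopf link, where the excision and rank calculation can be made explicit. Independence of the base point, together with the rank lower bound from Proposition \ref{prop_removing_component_rank_change}, will then let us transfer the conclusion back to the original $L$.
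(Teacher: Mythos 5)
Your overall instinct---that the minimal $\II^\natural$ hypothesis must be converted into a statement about surfaces via excision---points in the right direction, but the quantitative heart of your argument is exactly the part you have not supplied, and I do not believe it can be supplied in the form you describe. You assert that ``even a single essential pair of intersections forces an extra factor of $2$'' via a tubing construction, but there is no mechanism in singular instanton homology by which a geometric (as opposed to algebraic) intersection point of $L''$ with $S$ directly doubles the rank of $\II^\natural(L)$; the invariant only sees $L$ up to isotopy and cannot distinguish a representative of $L''$ meeting $S$ inessentially from one meeting it essentially. The paper's actual mechanism is different and is the key idea you are missing: after adding earrings to $L'$ and performing the Dehn surgery that turns the fiber surface $S$ into a sphere in $S^1\times S^2$, one uses the fact (from \cite{XZ:excision}) that the \emph{top eigenvalue of the operator $\muu(S^2)$} on the instanton homology computes the minimal complexity $2g+n-2$ over all surfaces in the class of $\{\pt\}\times S^2$. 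Since the components of $\tilde L''$ are null-homologous, hence null-homotopic, in $S^1\times S^2$, a local-coefficient homotopy-invariance argument identifies this homology (with its $\muu(S^2)$-action) with the one where $\tilde L''$ is replaced by an unlink in a ball; the eigenvalue computation in the unlink case then \emph{produces} a genus-zero sphere disjoint from $\tilde L''$, i.e.\ a genus-zero Seifert surface $S'$ of $L'$ disjoint from $L''$. This is a direct construction, not a contradiction argument, and no per-intersection-point rank accounting is involved.

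Two further gaps. First, even granting the existence of \emph{some} genus-zero Seifert surface $S'$ of $L'$ disjoint from $L''$, you still need to isotope $L''$ off the \emph{given} surface $S$; this requires the uniqueness up to isotopy of the minimal-genus Seifert surface of the fibered link $L'$, a step absent from your proposal. Second, your proposed reduction to the case where $L'$ is a single Hopf link via Proposition \ref{prop_sublink_minimal_I} does not work: that proposition tells you each Hopf sublink of $L'$ has its own minimal $\II^\natural$ and its own Seifert annulus, but the surface $S$ in the statement is a Seifert surface of the entire connected sum, and being able to push $L''$ off each annulus separately does not yield a single isotopy pushing it off $S$.
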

\begin{proof}[Sketch of proof]
If $L'$ is the link given by Figure \ref{fig_standard_chain} and $L''$ has only one component, then this proposition is the same as  
\cite[Proposition 7.9]{XZ:forest}: although \cite{XZ:forest} assumed that $L$ has minimal Khovanov homology, this assumption was only used in the proof of  \cite[Proposition 7.9]{XZ:forest} to deduce that $L$ has minimal $\II^\natural$. 
There is no essential change in the proof
for the general case. We give a sketch of the proof here
and refer the reader to \cite[Section 7.2]{XZ:forest} for  details.

We add an earring $(m_i,u_i)$ to each component of $L'$ (see the proof of Proposition \ref{prop_I>Alexander} for the definition of earrings). We claim that since $L$ has minimal $\II^\natural$, the instanton Floer
homology for the triple
\begin{equation}\label{triple_L+earrings}
(S^3, L\cup \bigcup_i m_i, \sum_i u_i)
\end{equation}
has dimension $2^{|L|+|L'|-2}=2^{|L''|+2|L'|-2}$. 
This is proved by showing that $2^{|L|+|L'|-2}$ is both an upper bound and a lower
bound for the dimension of the instanton Floer homology of \eqref{triple_L+earrings}.
The upper bound for the dimension is obtained by
the earring-removing argument as in the proof of Proposition \ref{prop_I>Alexander}.
To show that $2^{|L|+|L'|-2}$ is a lower bound, we first use the local coefficient
argument as in the proof of Proposition \ref{prop_removing_component_rank_change} to obtain 
$$
\dim\II(S^3, L\cup \bigcup_i m_i, \sum_i u_i)\ge 
2^{|L''|}\dim\II(S^3, L'\cup \bigcup_i m_i, \sum_i u_i).
$$
By \cite[Theorem 3.6 and (14)] {KM:Alexander}, we have  
\begin{align*}
\dim\II(S^3, L'\cup \bigcup_i m_i, \sum_i u_i)&=\dim \KHI(L') \\
&\ge ||(1-t)^{|L'|-1}\Delta_{L'}(t)||\\
&\ge | 2^{|L'|-1}\Delta_{L'}(-1)|   \\
&= 2^{2|L'|-2},
\end{align*}
which yields the desired lower bound estimate.

Since the Hopf link is a fibered link with genus $0$ for all possible orientations, by Assumption (1), the link $L'$
is a fibered link with genus $0$ for all possible orientations. 
Since the minimal-genus Seifert surface of an oriented fibered link is unique up to isotopy, the interior of $S$ is isotopic to a fiber of $S^3-L'$. 
Therefore, after a Dehn surgery on $S^3$ along $L'$ that identifies the meridians of $L'$ to the components of $S\cap N(L')$, one obtains the manifold  $S^1\times S^2$. 

We cut  the triple \eqref{triple_L+earrings} along the boundary tori
of the neighborhoods of the components of $L'$ and re-glue as above to obtain a new triple
$$
(S^1\times S^2, \tilde{L}''\cup \tilde{L}' \cup \bigcup_i {\tilde{m}_i},
\sum_i\tilde{u}_i),
$$ 
where $\tilde{L}'',\tilde{L}', \tilde{m}_i $ 
denote the images of $L'',L', m_i$ in the new manifold $S^1\times S^2$, and $\tilde{u}_i$ denotes an arc joining $\tilde{m}_i$ to the $i^{\text{th}}$ component of $\tilde{L}'$.
The excision theorem for instanton Floer homology gives
\begin{equation}
\label{eqn_L''+2L'-2_after_surgery}
	\dim \II(S^1\times S^2, \tilde{L}''\cup \tilde{L}' \cup \bigcup_i {\tilde{m}_i},
\sum_i\tilde{u}_i)
= 2^{|L''|+2|L'|-2}.
\end{equation}

In $S^1\times S^2$, the link $\tilde{L}' \cup \bigcup_i {\tilde{m}_i}$ can be isotoped to a
 ``vertical'' position in the sense
that each slice $\{\pt\}\times S^2$ intersects each component of
$\tilde{L}' \cup \bigcup_i {\tilde{m}_i}$ transversely at exactly one point. The Seifert
surface $S$ can be completed into an embedded $S^2$ that is isotopic to $\{\pt\}\times S^2$ in $S^1\times S^2$. On the other hand,
any connected closed surface in $S^1\times S^2$ that intersects
$\tilde{L}'$ transeversely at $|\tilde{L}'|$ points defines a Seifert surface of $L'$, since
the complement of $\tilde{L}'$ in $S^1\times S^2$ is the same 
as the complement of $L'$ in $S^3$ by our construction.

Let $V$ be an $|L''|$-component unlink in a small ball in 
$S^1\times S^2- (\tilde{L}' \cup \bigcup_i {\tilde{m}_i})$. 
Applying a similar argument as \eqref{eqn_L''+2L'-2_after_surgery}, we have
$$
\dim \II(S^1\times S^2, V\cup \tilde{L}' \cup \bigcup_i {\tilde{m}_i},
\sum_i\tilde{u}_i)
= 2^{|L''|+2|L'|-2}.
$$
By Assumption (2), all the components of $\tilde{L}''$ are 
null-homologous in $S^1\times S^2$, and hence they are null-homotopic in $S^1\times S^2$. Therefore
$\tilde{L}''$ is homotopic to $V$ in $S^1\times S^2$.
A local coefficient argument then shows that
\begin{equation}
\label{eqn_iso_L'_L''_V_S1*S2}
	 \II(S^1\times S^2, \tilde{L}''\cup \tilde{L}' \cup \bigcup_i {\tilde{m}_i},
\sum_i\tilde{u}_i)\cong 
 \II(S^1\times S^2, V\cup \tilde{L}' \cup \bigcup_i {\tilde{m}_i},
\sum_i\tilde{u}_i)
\end{equation}
via an isomorphism that preserves the action of $\muu(S^2)$. 
It is proved in \cite{XZ:excision} that the eigenvalues of $\muu(S^2)$ 
 determines the generalized Thurston norm.  To be more precise, suppose $\Sigma$
 is an embedded surface in $S^1\times S^2$ that is homologous to $\{\pt\}\times S^2$, suppose $\Sigma$
 has genus $g$ and intersects 
 $\tilde{L}''\cup \tilde{L}' \cup \bigcup_i {\tilde{m}_i}$ transversely at $n$ points, then
 the top eigenvalue of $\muu(S^2)$ on 
  $$\II(S^1\times S^2, \tilde{L}''\cup \tilde{L}' \cup \bigcup_i {\tilde{m}_i},
\sum_i\tilde{u}_i)$$
is equal to the smallest possible value of
  $2g+n-2$ among all such surfaces; and a similar result holds for $ V\cup \tilde{L}' \cup \bigcup_i {\tilde{m}_i}$. Therefore, the isomorphism \eqref{eqn_iso_L'_L''_V_S1*S2}
  implies that there exits a genus $0$ surface in $S^1\times S^2$ which
  is disjoint from $\tilde{L}''$ and 
  intersects every component of $\tilde{L}' \cup \bigcup_i {\tilde{m}_i}$ transversely at
  exactly one point. Such a surface corresponds to a Seifert surface $S'$ of $L'$
  that is disjoint from $L''$ in $S^3$ and has genus zero. 

  Since the oriented link $L'$ is fibered, the minimal-genus Seifert surface of $L'$ is unique up to isotopy 
  relative to $L'$. Therefore
  there is an isotopy which takes $S'$ to $S$. Using this isotopy, we can isotope
  $L''$ to a link disjoint from $S$ as desired.
\end{proof}	

\subsection{Two and three-component links with minimal $\II^\natural$}

Recall that a link is called a forest of unknots if it is given by the disjoint unions and connected sums of Hopf links and unknots. The main result of this subsection is the following proposition. 

\begin{Proposition}
\label{prop_2_3_components}
	If $L$ is a link with 2 or 3 components such that $L$ has minimal $\II^\natural$, then $L$ is a forest of unknots. 
\end{Proposition}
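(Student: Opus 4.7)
\emph{Proof proposal.} The plan is to proceed in three stages: first show every component of $L$ is unknotted, then classify all two-component sublinks using the Alexander-polynomial bound, then case-analyze the linking graph for the three-component case.

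For stage one, I would observe that every single-component sublink of $L$ has minimal $\II^\natural$ by Proposition~\ref{prop_removing_component_rank_change} and Remark~\ref{rmk_sublink_min_I}, so $\dim_\bC \II^\natural(K) = 1$ for each component $K$. The Kronheimer--Mrowka unknot detection theorem \cite[Corollary 1.2]{KM:Kh-unknot} then yields that each $K$ is unknotted. For stage two, fix a two-component sublink $K_i \cup K_j$. It has minimal $\II^\natural$, so Proposition~\ref{prop_I>Alexander} gives
$$
\|(1-x)(1-y)\tilde\Delta_{K_i \cup K_j}(x, y)\| \leq 4.
$$
Combining this with the Torres formula $\tilde\Delta(x, 1) = (x^\ell - 1)/(x - 1)$ (using $\Delta_{K_i} \equiv 1$ and $\ell = \lk(K_i, K_j)$), the symmetric relation at $x = 1$, and the Alexander symmetry $\tilde\Delta(x^{-1}, y^{-1}) = \pm x^a y^b \tilde\Delta(x, y)$, I would extract $|\ell| \leq 1$ and $\tilde\Delta \in \{0, \pm 1\}$. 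When $\ell = 0$ and $\tilde\Delta = 0$, I would apply an extension of Proposition~\ref{prop_disjoint_from_Seifert_surface} to the case $L' =$ a single unknot (the proof sketch carries through since the unknot is also fibered of genus zero and $0$-surgery on an unknot produces $S^1 \times S^2$) to push $K_j$ off the Seifert disk of $K_i$, and conclude $K_i \cup K_j$ is the $2$-unlink. When $\ell = \pm 1$ and $\tilde\Delta = \pm 1$, I would identify $K_i \cup K_j$ with the Hopf link via the classical fact that a two-component link with unknotted components and abelian link group is the Hopf link, using the representation-variety structure from Section~\ref{sec_singular_instanton} to establish abelianness of $\pi_1(S^3 - K_i \cup K_j)$.

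For stage three, stage two applied to each pair says that every two-component sublink of $L$ is a $2$-unlink or Hopf link. Let $G$ be the linking graph on three vertices, and I would handle the four shapes of $G$ in turn. If $G$ has no edges, iterate the extended Proposition~\ref{prop_disjoint_from_Seifert_surface} to place the three unknots in pairwise disjoint balls, obtaining the $3$-unlink. If $G$ has exactly one edge, apply Proposition~\ref{prop_disjoint_from_Seifert_surface} with $L'$ the Hopf pair and $L''$ the third unknot (whose algebraic intersection with the genus-zero Seifert annulus of $L'$ vanishes), yielding Hopf $\sqcup$ unknot. If $G$ is a path, apply Proposition~\ref{prop_disjoint_from_Seifert_surface} twice along the path to realize $L$ as a connected sum of two Hopf links. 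If $G$ is a triangle, rule it out by applying Proposition~\ref{prop_I>Alexander} with $n = 3$: using the three-variable Torres formula together with the fact that each specialization $\tilde\Delta_L(x_i, x_j, 1)$ is determined by the Hopf pair $K_i \cup K_j$ (with $\tilde\Delta = \pm 1$) and the linking numbers to the third component, a direct computation should give $\|(1-x_1)(1-x_2)(1-x_3)\tilde\Delta_L\| > 16$, contradicting minimality.

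The hardest parts will be the $\ell = \pm 1$ identification in stage two and the triangle subcase of stage three. The former requires promoting the polynomial constraint $\tilde\Delta = \pm 1$ to a topological identification as the Hopf link --- either by establishing abelianness of the link group from the instanton data, or by running an excision-type argument after $0$-surgery analogous to the sketch of Proposition~\ref{prop_disjoint_from_Seifert_surface}. The latter is the main obstacle: since the general cycle-ruling-out result Proposition~\ref{prop_cycle_length_neq_4} is only proved in a later section and cannot be cited here, I have to obtain the Alexander norm estimate for triangle-linking-graph three-component links by a direct calculation, which requires careful book-keeping with the three-variable Torres formula and the symmetry constraints on $\tilde\Delta_L$.
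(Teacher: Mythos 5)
Your stages one and two broadly track the paper: the two-component classification is handled there by citing \cite[Corollary 6.2]{XZ:forest}, with the hypothesis $\|(1-x)(1-y)\tilde\Delta\|\le 4$ now supplied by Proposition \ref{prop_I>Alexander} instead of Dowlin's spectral sequence, and your treatment of the forest-shaped linking graphs in stage three is exactly Proposition \ref{prop_tree_implies_Hopf_connected_sum}. (A secondary concern: your stage two is in effect an attempt to reprove \cite[Corollary 6.2]{XZ:forest} from scratch, and the step ``establish abelianness of $\pi_1(S^3-K_i\cup K_j)$ from the representation-variety structure'' is not an argument --- knowing $\tilde\Delta=\pm1$ and $\lk=\pm1$ does not by itself force the link group to be abelian; that identification is the substantive content of the cited result.)

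The decisive gap is the triangle subcase, where your proposed Alexander-norm estimate provably fails. First, the multivariable Alexander polynomial of a three-component link is not determined by its two-component sublinks and linking numbers; the Torres formula only pins down the specializations $\tilde\Delta_L(x_i,x_j,1)$, so no computation from the linking graph alone can produce the bound you want. Worse, even after the isotopy class is pinned down the inequality gives nothing: the paper first shows, via Proposition \ref{prop_disjoint_from_Seifert_surface} and Lemma \ref{lem_Seifert_disks_of_Ki}, that a triangle-graph link with minimal $\II^\natural$ must be L6n1 $=T(3,3)$, whose multivariable Alexander polynomial is $x_1x_2x_3-1$; expanding $(1-x_1)(1-x_2)(1-x_3)(x_1x_2x_3-1)$ gives norm exactly $16=2^{3-1}\cdot 4$, so \eqref{eqn_I>Alexander} yields only $\dim\II^\natural\ge 4$, which is consistent with minimality. (This is the same phenomenon the authors flag for L8n8 in Remark \ref{rmk_after_conj}.) The paper therefore replaces the Alexander argument with a gauge-theoretic computation: viewing L6n1 as the closure of $\sigma_1^2$ together with its braid axis, it resolves the two crossings, identifies the resulting cube of resolutions with maps on annular instanton homology, observes that the four edge maps are the nonzero pair-of-pants maps of \cite[Proposition 5.14]{AHI}, deduces from the $\bZ/2$--grading that the diagonal map vanishes, and concludes $\dim\II^\natural(\mathrm{L6n1})=6>4$. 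Without this step --- both the reduction to a specific link and the instanton computation for it --- your proof does not close.
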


To start, we establish Proposition \ref{prop_2_3_components} when $L$ has two components. 

\begin{Lemma}[Special case of Proposition \ref{prop_2_3_components} when $L$ has two components]
\label{lem_2_component_min_I}
If $L$ is a link with 2 components such that $L$ has minimal $\II^\natural$, then $L$ is the Hopf link or the unlink.
\end{Lemma}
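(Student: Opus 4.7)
The plan is to identify $L = K_1 \cup K_2$ in three steps, using the instanton bounds of Propositions \ref{prop_removing_component_rank_change} and \ref{prop_I>Alexander} together with the excision method behind Proposition \ref{prop_disjoint_from_Seifert_surface}.

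\emph{Step 1: both components are unknots.} Applying Proposition \ref{prop_removing_component_rank_change} to remove $K_1$ and $K_2$ in turn gives $2 = \dim_\bC \II^\natural(L) \ge 2 \dim_\bC \II^\natural(K_i)$, forcing $\dim_\bC \II^\natural(K_i) = 1$. Combined with the identification $\II^\natural(K) \cong \KHI(K)$ from \cite[Proposition 1.4]{KM:Kh-unknot} and the $\KHI$-detection of the unknot, each $K_i$ must be the unknot.

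\emph{Step 2: the linking number has absolute value at most $1$.} Set $\ell := \lk(K_1, K_2)$. Proposition \ref{prop_I>Alexander} supplies $4 \ge \|(1-x_1)(1-x_2)\tilde{\Delta}_L(x_1, x_2)\|$. Because both components are unknots, the Torres formula gives $\tilde{\Delta}_L(x_1, 1) \doteq 1 + x_1 + \cdots + x_1^{|\ell|-1}$, and symmetrically in $x_2$. Expanding $(1-x_1)(1-x_2)\tilde{\Delta}_L$ and reading off the row and column sums of its coefficient matrix (as well as the first moments, coming from $\partial_{x_2}$ and $\partial_{x_1}$ at $1$), a direct combinatorial check using the Torres identities forces $\|(1-x_1)(1-x_2)\tilde{\Delta}_L\| \ge 8$ as soon as $|\ell| \ge 2$. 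This contradicts the bound $\le 4$, so $|\ell| \in \{0, 1\}$.

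\emph{Step 3: identification.} For $\ell = 0$, I would apply Proposition \ref{prop_disjoint_from_Seifert_surface} with $L' = K_2$ and $L'' = K_1$; the proof of that proposition adapts verbatim when $L'$ is a single unknot, since the unknot is a genus-$0$ fibered link with disk pages and the Alexander-polynomial lower bound used there still matches the earring-removing upper bound. The conclusion is that $K_1$ is isotopic in $S^3 - K_2$ off a disk bounding $K_2$, so $L$ is the $2$-component unlink. For $|\ell| = 1$, I would perform $0$-surgery on the unknot $K_2$ to pass from $S^3$ to $S^1 \times S^2$; the image $\widetilde K_1$ of $K_1$ is then a knot representing a generator of $H_1(S^1 \times S^2)$. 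The excision theorem for instanton Floer homology identifies $\dim \II^\natural(L)$ with the dimension of the Floer group of a model link in $S^1 \times S^2$, and the Thurston-norm detection via the eigenvalues of $\muu(S^2)$ (the same tool used in the sketch of Proposition \ref{prop_disjoint_from_Seifert_surface}) forces $\widetilde K_1$ to be isotopic to a standard fiber $S^1 \times \{\mathrm{pt}\}$. Undoing the surgery then exhibits $L$ as the Hopf link.

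The main obstacle is the $|\ell| = 1$ subcase of Step 3: the excision setup has to be carried out in the presence of non-vanishing algebraic intersection, whereas Proposition \ref{prop_disjoint_from_Seifert_surface} is tailored to a zero-intersection hypothesis. Combining the $0$-surgery with the $\muu(S^2)$-eigenvalue detection to pin down $\widetilde K_1$ as a standard fiber is the technically delicate ingredient.
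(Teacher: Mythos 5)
Your proposal takes a genuinely different route from the paper: the paper's proof is essentially a citation of \cite[Corollary 6.2]{XZ:forest}, observing that the only role of the Khovanov hypothesis there was to supply (i) minimality of $\II^\natural$ and (ii) the bound $\|(1-x_1)(1-x_2)\tilde\Delta_L\|\le 4$, and that (ii) now follows from Proposition \ref{prop_I>Alexander}. Your Steps 1 and 2 are sound: the unknottedness of the components follows as you say, and your Torres-formula argument in Step 2 does work --- the row/column sums of the coefficient matrix of $(1-x_1)(1-x_2)\tilde\Delta_L$ all vanish while the first moments form the vectors $\pm(x^{a+|\ell|}-x^{a})$, which for $|\ell|\ge 2$ forces at least three nonzero rows, two of norm $\ge 2$ and one (with vanishing sum \emph{and} first moment) of norm $\ge 4$, giving $\|\cdot\|\ge 8>4$.

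The genuine gap is the $|\ell|=1$ case of Step 3, which you flag but do not fill. Two distinct problems arise. First, the $\muu(S^2)$--eigenvalue mechanism detects the minimal complexity $2g+n-2$ of embedded surfaces in the class of $\{\pt\}\times S^2$; at best it produces a sphere meeting $\widetilde K_1$ (and the auxiliary earrings) transversely in the minimal number of points. It does \emph{not} by itself force $\widetilde K_1$ to be isotopic to $S^1\times\{\pt\}$: a knot in $S^1\times S^2$ meeting an essential sphere once can be locally knotted (cut along the sphere to get a knotted arc in $S^2\times[0,1]$). To exclude this you must feed in Step 1 (that $K_1$ is unknotted in $S^3$) together with a ball-pair/tangle-triviality argument, none of which appears in your sketch. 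Second, ``undoing the surgery'' is not automatic: an isotopy of $\widetilde K_1$ in $S^1\times S^2$ need not avoid the surgery solid torus, so it does not descend to an isotopy of $K_1$ in $S^3-K_2$. The paper's mechanism for transferring such conclusions back to $S^3$ (in Proposition \ref{prop_disjoint_from_Seifert_surface}) is the uniqueness up to isotopy of the minimal-genus Seifert surface of a fibered link; that device is available for the unknot $K_2$ (its page is a disk) but must actually be invoked, after which one still needs the classical fact that an unknotted $K_1$ meeting a disk bounded by $K_2$ in a single point is a meridian of $K_2$. A similar, milder caveat applies to your $\ell=0$ case: Proposition \ref{prop_disjoint_from_Seifert_surface} as stated requires $L'$ to be a Hopf link or a connected sum of Hopf links, so applying it with $L'$ a single unknot is an extension of the statement, not an application of it.
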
 

\begin{proof}
	This lemma is essentially the same as \cite[Corollary 6.2]{XZ:forest}. Although the assumptions in \cite{XZ:forest} required that $L$ has minimal Khovanov homology, this condition was only used in \cite[Corollary 6.2]{XZ:forest} to show that $L$ has minimal $\II^\natural$ and that the multi-variable Alexander polynomial $\tilde\Delta_L $ of $L$ satisfies 
	\begin{equation}
	\label{eqn_alex_condition_hopf}
			\|(1-x)(1-y)\,\tilde\Delta_L(x,y)\| \le 4.
	\end{equation}
	In \cite{XZ:forest}, Inequality \eqref{eqn_alex_condition_hopf} was proved using Dowlin's spectral sequence \cite{Dowlin} from Khovanov homology to Heegaard Knot Floer homology. However, by Proposition \ref{prop_I>Alexander},  Inequality \eqref{eqn_alex_condition_hopf} also follows from the minimality of $\II^\natural$. 
\end{proof}

Lemma \ref{lem_Seifert_disks_of_Ki} below is a straightforward generalization of \cite[Proposition 7.1]{XZ:forest}.
\begin{Lemma}
\label{lem_Seifert_disks_of_Ki}
	Suppose $L=K_1\cup \cdots\cup K_n$ is an $n$--component link with minimal $\II^\natural$. Then for every component $K_i$ of $L$, there exists an embedded disk $D_i$, such that
	\begin{enumerate}
		\item $\partial D_i=K_i$ for all $i$,
		\item for all $j\neq i$, the disk $D_i$ intersects $K_j$ transversely at $|\lk(K_i,K_j)|$ points.
	\end{enumerate}
\end{Lemma}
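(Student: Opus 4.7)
Plan: The starting point is to show that each $K_i$ must be an unknot with small linking numbers. By Remark \ref{rmk_sublink_min_I}, every two-component sublink $K_i\cup K_j$ also has minimal $\II^\natural$, so by Lemma \ref{lem_2_component_min_I} it is either the Hopf link or the unlink. In particular each $K_i$ is unknotted, so it bounds some embedded disk in $S^3$, and $|\lk(K_i,K_j)|\in\{0,1\}$ for every $i\neq j$. It remains to show that the disk bounded by $K_i$ can be chosen to realise the minimal geometric intersection count with every other component simultaneously.

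Fix $i$ and let $D$ be any embedded disk with $\partial D=K_i$. For each $j\neq i$, the intersection $D\cap K_j$ has algebraic count $\pm\lk(K_i,K_j)$, hence geometric count at least $|\lk(K_i,K_j)|$. To achieve equality, I would adapt the surgery-plus-excision argument from the proof of Proposition \ref{prop_disjoint_from_Seifert_surface}. Performing $\infty$-framed Dehn surgery of $S^3$ along the unknot $K_i$ yields $S^1\times S^2$ in which $D$ caps off to an embedded $2$-sphere $\tilde S$ representing the generator of $H_2$, while each $K_j$ becomes a knot $\tilde K_j$ in the class $\lk(K_i,K_j)\cdot[S^1\times\pt]$. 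After attaching earrings and applying the excision theorem as in the proof of Proposition \ref{prop_disjoint_from_Seifert_surface}, one identifies the singular instanton Floer homology before and after the surgery as $\muu(S^2)$-modules. A local-coefficient comparison with the model configuration in which every $\tilde K_j$ is replaced by an unknot representing the same homology class, together with the minimality hypothesis on $\II^\natural$, forces these two Floer groups to be isomorphic respecting the $\muu(S^2)$-action. By the Thurston-norm interpretation of the eigenvalues of $\muu(S^2)$ established in \cite{XZ:excision}, this produces a $2$-sphere in $S^1\times S^2$ homologous to $\tilde S$ that meets each $\tilde K_j$ transversely in exactly $|\lk(K_i,K_j)|$ points. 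Reversing the surgery turns this sphere into the disk $D_i$ required by the lemma.

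The main obstacle I anticipate is that Proposition \ref{prop_disjoint_from_Seifert_surface} is stated with $L'$ a Hopf link or connected sum of such, and under the assumption that the algebraic intersection of $S$ with every component of $L''$ vanishes. Here I need to apply its machinery with $L'=K_i$ a single unknot and with components $K_j$ of nonzero linking number contributing nontrivial algebraic intersection with $D$. Fortunately the unknot is itself a fibered link with a unique genus-zero Seifert surface, so the reduction to $S^1\times S^2$ and the excision-plus-Thurston-norm scheme go through unchanged; the only extra bookkeeping is to track how the geometric intersections of the dual sphere distribute across components lying in distinct homology classes of $S^1\times S^2$, which is a routine refinement of the norm argument rather than a new idea. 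This is precisely the sense in which the lemma is a ``straightforward generalization'' of \cite[Proposition 7.1]{XZ:forest}.
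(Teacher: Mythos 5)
Your proposal is correct and follows essentially the same route as the paper: the paper likewise first reduces to the fact that every two-component sublink is an unlink or a Hopf link (so each $K_i$ is unknotted and $|\lk(K_i,K_j)|\le 1$), then views $\bigcup_{j\neq i}K_j$ as an annular link in the solid-torus complement of $K_i$, identifies $\II^\natural(L,p_i)$ with its annular instanton homology, and invokes \cite[Corollary 4.4]{XZ:forest}, whose content is precisely the passage to $S^1\times S^2$ together with the excision, local-coefficient, and $\muu(S^2)$--Thurston-norm argument you re-derive. (One minor slip: the Dehn filling of $\partial N(K_i)$ that caps the Seifert disk off to a sphere is the $0$-framed, not the $\infty$-framed, surgery.)
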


\begin{remark}
	The value of $|\lk(K_i,K_j)|$ does not depend on the orientations of $K_i$ and $K_j$, so it makes sense to refer to $|\lk(K_i,K_j)|$ without specifying an orientation of $L$.
\end{remark}

\begin{proof}
	By Proposition \ref{prop_sublink_minimal_I} and Lemma \ref{lem_2_component_min_I}, for $i\neq j$, the sublink formed by $K_i$ and $K_j$ is either the unlink or the Hopf link, therefore $|\lk(K_i,K_j)|=0$ or $1$, and every component of $L$ is an unknot. For each $i$, take a base point $p_i\in K_i$, and let $\hat L_i\subset S^1\times D^2$ be the link $\cup_{j\neq i} K_j$ viewed as a link in the complement of $N(K_i)$. 
	By \cite[Section 4.3]{AHI} and the assumption that $L$ has minimal $\II^\natural$,
	$$
	\dim \AHI(\hat L_i) = \dim \II^\natural(L,p_i)= 2^{n-1}.
	$$
	
	We will abuse notation and use $K_j$ $(j\neq i)$ to denote the respective component of $\hat L_i$.
	Then by \cite[Corollary 4.4]{XZ:forest}, there is a meridional disk of $S^1\times D^2$ that is disjoint from $K_j$ when $|\lk(K_i,K_j)|=0$ and intersects $K_j$ transversely at one point when $|\lk(K_i,K_j)|=1$. This yields the desired disk $D_i$.
\end{proof}

We can now prove Proposition \ref{prop_tree_implies_Hopf_connected_sum}.
\begin{repProposition}{prop_tree_implies_Hopf_connected_sum}
Suppose $L$ has minimal $\II^\natural$, then the linking number of every pair of components of $L$ is either $0$ or $\pm 1$. If we further assume that the linking graph of $L$ is a forest, then $L$ is a forest of unknots.
\end{repProposition}

\begin{proof}[Proof of Proposition \ref{prop_tree_implies_Hopf_connected_sum}]
The first part of the statement follows from Proposition \ref{prop_sublink_minimal_I} and Lemma \ref{lem_2_component_min_I}. For the second part of the statement, we use induction on $n$.  
The case for $n=1$ follows from 
\cite[Proposition 1.4]{KM:Kh-unknot} and \cite[Proposition 7.16]{KM:suture}.
 When $n\ge 2,$ if the linking graph of $L$ is a forest, then there exists a vertex of the linking graph with degree no greater than 1. Let $K\subset L$ be the component corresponding to such a vertex, and let $L':=L-K$. There are two cases. 
	
	Case 1: there exists a unique element $K'$ of $L'$ such that $|\lk(K',K)|= 1$.  In this case, by Lemma \ref{lem_Seifert_disks_of_Ki}, the link $L$ is given by the union of $L'$ and a meridian of $K'$.  The linking diagram of $L'$ is also a forest.  Therefore, the desired result follows from Proposition  \ref{prop_sublink_minimal_I} and the induction hypothesis on $L'$.
	
	Case 2: the linking number of $K$ with every component of $L$ is zero. In this case, by Lemma \ref{lem_Seifert_disks_of_Ki}, the link $L$ is the disjoint union of $L'$ and an unknot. Therefore, the  desired result follows from Proposition  \ref{prop_sublink_minimal_I} and the induction hypothesis on $L'$.
\end{proof}

Now we prove Proposition \ref{prop_2_3_components}.

\begin{proof}[Proof of Proposition \ref{prop_2_3_components}]
	By Proposition \ref{prop_tree_implies_Hopf_connected_sum}, we only need to show that if $L$ is a link with three components and has minimal $\II^\natural$, then the linking graph of $L$ cannot be a triangle.
	
	Assume the linking graph of $L$ is a triangle, write $L=K_1\cup K_2\cup K_3$. Then $L' := L_1\cup L_2$ is the Hopf link. Consider the two Seifert surfaces $S_1,S_2$ of $L'$ as shown in Figure \ref{fig_hopf_seifert}. By the assumption on linking numbers, the algebraic intersection number of $K_3$ with either $S_1$ or $S_2$ is zero.  After taking the mirror image of $L$, we may assume without loss of generality that the algebraic intersection number of $K_3$ with $S_1$ is zero. By Proposition \ref{prop_disjoint_from_Seifert_surface}, after an isotopy, we may assume that $K_3$ is disjoint from $S_1$.
	
	By Lemma \ref{lem_Seifert_disks_of_Ki}, $K_3$ bounds an embedded disk $D$ that intersects each of $K_1$ and $K_2$ transversely at one point. After a generic perturbation, the intersection of $D$ and $S_1$ is the disjoint union of an arc and finitely many circles. Let $\theta$ be the arc component of $S_1\cap D$. Since $S_1$ is an annulus, $\theta$ is isotopic to the arc shown in Figure \ref{fig_hopf2_arc}.  By shrinking $K_3$ to a neighborhood of $\theta$ through $D$, we conclude that $L$ must be isotopic to the link shown in Figure \ref{fig_l6n1}, which is the link L6n1.
	
	\begin{figure}
	\begin{overpic}[width=0.8\textwidth]{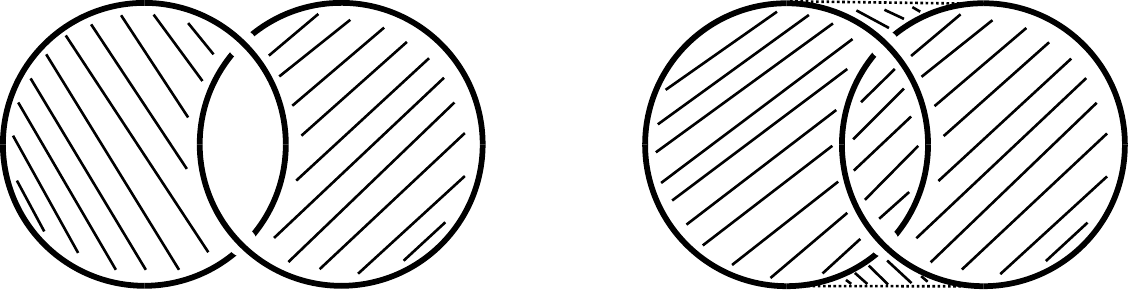}	
	\put(55,-12){$S_1$}
	\put(225,-12){$S_2$}
	\end{overpic}
	\vspace{\baselineskip}
	\caption{Two Seifert surfaces of $L'$}
	\label{fig_hopf_seifert}
	\end{figure}
	
	\begin{figure}[!tbp]
	\begin{minipage}[b]{.49\textwidth}
	\centering
	\includegraphics[width=.7\textwidth]{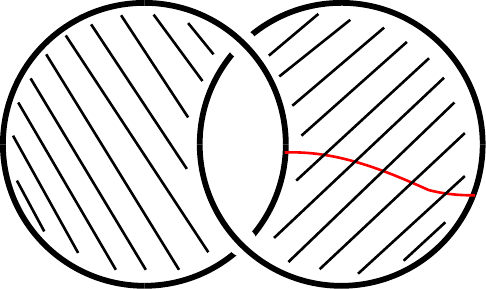}
	\caption{An arc on $S_1$.}
	\label{fig_hopf2_arc}
	\end{minipage}
	\hfill
	\begin{minipage}[b]{.49\textwidth}
	\centering
	\includegraphics[width=0.8\textwidth]{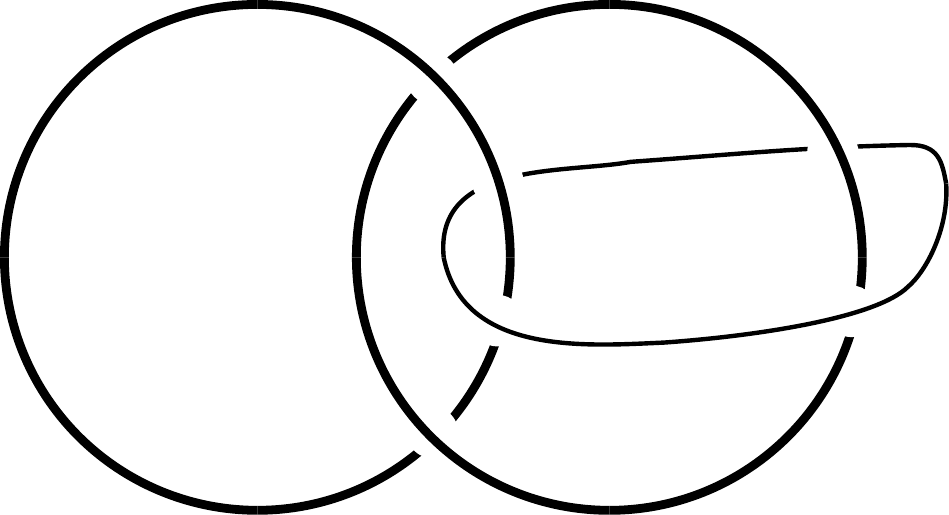}
	\caption{The link L6n1.}
	\label{fig_l6n1}	
	\end{minipage}
	\end{figure}
	
	Although this is not be needed in the proof, we remark that the link L6n1 is the torus link $T(3,3)$. 
	The rest of the proof shows that the link L6n1 does not have minimal $\II^\natural$. 

	We use an argument from \cite{KM_filtration}.
    Notice that the link L6n1 can also be described as the union of the closure of the 2-braid
	$\sigma_1^2$ and the axis unknot. Let $p$ be a point on the axis unknot. Resolving the two crossings of the braid, we obtain the following chain complex whose homology
	is isomorphic to $\II^\natural(L,p)$ by \cite[Theorem 6.8]{KM:Kh-unknot}:
\begin{equation}
\label{eqn_square_chain}
	\xymatrix{
  C\II^\natural(U_3)  \ar[r]\ar[d]\ar[rd]    &    C\II^\natural(U_2)   \ar[d] \\
   C\II^\natural(U_2)  \ar[r]      &          C\II^\natural(K_2\cup U)
}
\end{equation}
where $U_n$ denotes the unlink with $n$ components, and $K_2\cup U$ denotes the link given by the union of the closure of the trivial 2-braid and the axis unknot.
The arrows above are given by link cobordisms and are not necessarily commutative. The differential map of \eqref{eqn_square_chain} the sum of the arrows and the Floer differentials within the four vertices.

The unperturbed representation variety of an unlink with $n$--components is a product of $n-1$ copies of $S^2$. After a suitable perturbation, 
the perturbed representation variety consists of points with the same $\bZ/2$--grading.
The unperturbed representation variety 
for $K_2\cup U$ consists of 4 non-degenerate points which have the same $\bZ/2$--grading. Therefore, the four vertices in the square \eqref{eqn_square_chain} have zero Floer differentials. So \eqref{eqn_square_chain} is isomorphic to the following square
\begin{equation}
\label{eqn_square_Floer}
\xymatrix{
  \II^\natural(U_3,p)  \ar[r]\ar[d]\ar[rd]    &    \II^\natural(U_2,p)   \ar[d] \\
   \II^\natural(U_2,p)  \ar[r]      &          \II^\natural(K_2\cup U,p)
}
\end{equation}
where the non-diagonal arrows are induced by link cobordisms and
the diagonal arrow is induced by a 1-parameter family of link cobordisms. 
Notice that all these cobordisms are trivial products  on the axis unknot.

Every link in \eqref{eqn_square_Floer} is the union of  an annular link 
(i.e. a link in the solid torus) and
the axis unknot. By \cite[Equation (4.5)]{AHI}, the $\II^\natural$'s
in the square are isomorphic to the annular instanton Floer homology groups $\AHI$
of the corresponding annular link. 
Moreover, the isomorphism is natural 
with respect to link cobordisms that are trivial  on the axis unknot. Therefore,
the non-diagonal maps of \eqref{eqn_square_Floer} are conjugate to the following maps on $\AHI$:
\begin{equation}
\label{eqn_square_AHI}
\xymatrix{
  \AHI(U_2)  \ar[r]\ar[d]   &    \AHI(U_1)   \ar[d] \\
   \AHI(U_1)  \ar[r]      &          \AHI(K_2)
}
\end{equation}
where $U_n$ is the trivial annular link with $n$ components, and $K_n$ is the closure of the trivial $n$--braid. All the edge maps in \eqref{eqn_square_AHI}
are induced by pair-of-pants cobordisms between links in the solid torus.
 The edge maps of \eqref{eqn_square_AHI} are calculated 
in \cite[Proposition 5.14]{AHI} and all four maps are non-zero. In fact,
these maps can be identified with the pair-of-pants maps for \emph{annular Khovanov homology} introduced in \cite{APS}.
(See also \cite{Rob} for an exposition on annular Khovanov homology.)
Therefore by the previous discussion, the four edge maps in \eqref{eqn_square_Floer} are also all non-zero.
	
The groups in \eqref{eqn_square_Floer} are all $\bZ/2$--graded and every group is supported in a single $\bZ/2$--grading. Since the diagonal map is defined using a 1-parameter family of metrics, its degree differs from the degree of the composition of the two edge maps by $1$. Since the edge maps are all non-zero, the $\bZ/2$--grading implies that the diagonal map in  \eqref{eqn_square_Floer} must be zero 
(cf. \cite[Lemma 9.2]{KM_filtration}).

As a consequence, we have
\begin{equation*}
\dim\II^\natural(L_1,p)=\dim \AKh(\sigma_1^2;\bC)=6>4,
\end{equation*}
and the desired result is proved.
\end{proof}

\subsection{Seifert disks in minimal position}

Proposition \ref{prop_disks_in_minimal_position} below is a straightforward generalization of \cite[Proposition 7.6]{XZ:forest}. It is the consequence of Lemma \ref{lem_Seifert_disks_of_Ki} and a surgery argument. 

\begin{Proposition}
\label{prop_disks_in_minimal_position}
	Suppose $L=K_1\cup \cdots\cup K_n$ has minimal $\II^\natural$. Then there exists a sequence of embedded disks $D_1,\cdots,D_n$, such that
	\begin{enumerate}
		\item $\partial D_i = K_i$ for all $i$,
		\item $D_i$ is transverse to $K_j$ for all $j\neq i$,
		\item $D_i\cap D_j$ is empty if $|\lk(K_i,K_j)|=0$,
		\item $D_i$ and $D_j$ intersect transversely at one arc if $|\lk(K_i,K_j)|\neq 0$.
	\end{enumerate}
\end{Proposition}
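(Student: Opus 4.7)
My plan is to start with the Seifert disks $D_1,\dots,D_n$ produced by Lemma \ref{lem_Seifert_disks_of_Ki} and improve them by iterated innermost-disk surgery in $S^3$. By Proposition \ref{prop_tree_implies_Hopf_connected_sum} we have $|\lk(K_i,K_j)|\in\{0,1\}$, so Lemma \ref{lem_Seifert_disks_of_Ki} already supplies each $D_i$ transverse to every $K_j$ with exactly $|\lk(K_i,K_j)|$ intersection points. After a small perturbation into general position, each intersection $D_i\cap D_j$ is a compact $1$--manifold whose boundary is the set $(D_i\cap K_j)\cup(D_j\cap K_i)$ of $2|\lk(K_i,K_j)|$ points; hence $D_i\cap D_j$ consists of interior circles together with no arc when $|\lk(K_i,K_j)|=0$, and exactly one arc when $|\lk(K_i,K_j)|=1$ (the two boundary points, one on each knot, necessarily being joined). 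Conditions (1), (2), and the ``arc count'' parts of (3) and (4) already hold, so the remaining task is to remove every interior circle of $\bigcup_{i<j}(D_i\cap D_j)$.

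The basic move is an innermost-disk surgery. As long as at least one such circle is present, I find a globally innermost subdisk: among all pairs $(E,D_k)$ with $E\subset D_k$ a subdisk bounded by some circle of $\bigcup_{p<q}(D_p\cap D_q)$, I pick one whose interior meets no other $D_m$ (such a choice exists by taking any inclusion-minimal $E$). Say $E\subset D_j$ and $C=\partial E\subset D_i\cap D_j$. The circle $C$ bounds a unique subdisk $E'\subset D_i$ (the one not containing $\partial D_i$), and $E\cup E'$ cobounds a $3$--ball $B$ in $S^3$; replacing $E'$ by a small push-off of $E$ across $B$ produces a new embedded disk $D_i'$ with $\partial D_i'=K_i$. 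The arc component of $D_i\cap D_j$ (if present) lies entirely outside both $E$ and $E'$: it has one endpoint on $\partial D_i$ and another on $\partial D_j$, neither of which meets $C$, and it is disjoint from $C$ as a distinct component of the $1$--manifold $D_i\cap D_j$; hence it is untouched by the surgery.

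The main obstacle is to ensure that the surgery does not create new intersection circles in the other pairs $(i,k)$. This is exactly what the global-innermost choice of $E$ guarantees: because $\mathrm{int}(E)$ meets no $D_k$ with $k\neq j$, the push-off is disjoint from every such $D_k$, so $D_i'\cap D_k\subset D_i\cap D_k$ for all $k\neq i,j$, while $D_i'\cap D_j$ strictly loses the circle $C$ (and possibly loses further circles contained in $E'$). Transversality of $D_i'$ with every $K_m$ is restored by a final generic perturbation; condition (2) demands only transversality (not a specific intersection count), so no further adjustment is needed. Consequently the total number of interior circles in $\bigcup_{i<j}(D_i\cap D_j)$ is a strictly decreasing non-negative integer under iteration, and the process terminates in a configuration satisfying all four conditions. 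This is the surgery argument carried out in detail in \cite[Proposition 7.6]{XZ:forest}.
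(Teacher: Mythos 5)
Your setup is fine and matches the paper's starting point: beginning from the embedded disks of Lemma \ref{lem_Seifert_disks_of_Ki}, the boundary count $\#\partial(D_i\cap D_j)=2|\lk(K_i,K_j)|\le 2$ correctly shows that the arc part of each pairwise intersection is already as required, so the whole content of the proposition is the removal of the interior circles, and you have correctly identified the main obstacle (surgery on one pair creating new intersections with third disks). The gap is in your one-line resolution of that obstacle: the claim that an inclusion-minimal subdisk $E\subset D_j$ bounded by an intersection circle has interior disjoint from every other $D_m$. Inclusion-minimality only rules out further \emph{circles} of $\bigcup_{p<q}(D_p\cap D_q)$ inside $\operatorname{int}(E)$. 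It does not rule out (a) sub-arcs of the arc component of $D_j\cap D_m$ entering $\operatorname{int}(E)$ by crossing $\partial E=C$ at triple points of $D_i\cap D_j\cap D_m$, nor (b) a puncture point of $D_j\cap K_m$ lying inside $\operatorname{int}(E)$. Both configurations are genuinely possible (triple points among three distinct disks are generic and are not excluded even in the final configuration asserted by the proposition). In case (a) the push-off of $E$ creates new components of $D_i'\cap D_m$; in case (b) it changes $|D_i'\cap K_m|$ away from $|\lk(K_i,K_m)|$, which destroys the boundary-point count on which your ``exactly one arc'' conclusion for the pair $(i,m)$ depends --- so your remark that condition (2) ``demands only transversality'' does not dispose of this. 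As written, the strict decrease of the circle count is therefore not established.

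This is exactly the point where the paper's argument takes a different and more robust route: it minimizes a global complexity (the number of double-point circle components) over all \emph{admissible} configurations of possibly immersed disks, and the complexity-reducing move is a cut-and-paste resolution along a circle rather than a push-off across a ball. Because the resolved disks are contained in the union of the old ones, that move cannot create new intersections with third disks or with the $K_m$, so one never needs a ``globally innermost'' disk to exist --- any circle will do. If you want to salvage your version, you would need either to show separately that arcs and punctures can be isotoped out of an innermost circle (itself a nontrivial bigon/argument with triple points), or to switch to the minimal-complexity cut-and-paste framework of \cite[Proposition 7.6]{XZ:forest}, which you cite but whose mechanism is not the one you describe.
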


\begin{proof}[Sketch of proof]
The proof is almost verbatim as \cite[Proposition 7.6]{XZ:forest}.  We give a sketch of the proof and refer the reader to \cite[Proposition 7.6]{XZ:forest} for details.

	Let $D=(D_1,\cdots,D_n)$ be a sequence of \emph{immersed} disks such that $\partial D_i=K_i$. We say that $D$ is \emph{generic}, if all the intersection points are locally diffeomorphic to one of the following models in $\bR^3$ at $(0,0,0)$: 
\begin{enumerate}
\item the intersection of the $xy$--plane and $\{(x,y,z)|x=0, z\ge 0\}$,
\item the intersection of the $xy$--plane and the $yz$--plane, 
\item the intersection of the $xy$--, $xz$--, and $yz$--planes. 
\end{enumerate}
If $D=(D_1,\cdots,D_n)$ a generic sequence of disks, let $\Sigma_1(D)$, $\Sigma_2(D)$, $\Sigma_3(D)$ be the set of intersection points described by (1), (2), (3) above respectively. Define the \emph{complexity} of $D$ to be the number of connected components of $\Sigma_2(D)$.

We say that the sequence $D=(D_1,\cdots,D_n)$ is \emph{admissible}, if 
\begin{enumerate}
	\item $D$ is generic,
	\item $D_i$ intersects $K_j$ at $|\lk(K_i,K_j)|$ points for all $i\neq j$,
	\item the interior of $D_i$ is disjoint from $K_i$ for all $i$,
	\item every element of $\Sigma_3(D)$ is contained in at least two different disks.
\end{enumerate}

Condition (4) above is equivalent to the condition that none of the disks in $D$ has triple self-intersections. 
	
Let $D_1,\cdots,D_n$ be the disks given by Lemma \ref{lem_Seifert_disks_of_Ki},  then after a generic perturbation, the sequence $(D_1,\cdots,D_n)$ is admissible, so admissible sequences exist.
	Take a sequence of admissible disks with minimal complexity. We will abuse notation and still denote this sequence by  $(D_1,\cdots,D_n)$.
	
	For each $i$, if $D_i$ is not embedded, then the admissibility condition implies that the self-intersection locus of $D_i$ is a disjoint union of finitely many circles. Resolving the singularity of $D_i$ along one of the circles yields a sequence of admissible disks with smaller complexity.  Therefore, by the assumption on the minimality of complexity, every disk $D_i$ is embedded. 
	
	If the intersection of $D_i$ and $D_j$ ($i\neq j$) contains circles, then resolving the singularity along one of the circles yields a sequence of disks with smaller complexity.  Since all the disks $D_i$ are  embedded, the resulting sequence is still  admissible, contradicting the assumption.  
	
	In conclusion, for an admissible sequence with minimal complexity, every disk $D_i$ is embedded, and the intersection of $D_i$ and $D_j$ ($i\neq j$) only contains arcs. Hence the statement is proved.
\end{proof}

\section{Eliminate cycles of lengths $\neq$ 4}
\label{sec_cycles}

This section proves Proposition \ref{prop_cycle_length_neq_4}.  By Proposition \ref{prop_2_3_components}, we only need to prove the following statement.

\begin{Proposition}
\label{prop_cycle_length_greater_than_5}
	Suppose $n\ge 5$ and $L$ is an $n$--component link such that the linking graph of $L$ is a cycle, the $L$ does not have minimal $\II^\natural$.
\end{Proposition}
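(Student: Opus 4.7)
The plan is to first identify $L$, via the minimal-position disks of Proposition 4.5, with the standard cyclic chain link $C_n$ (the $n$-component link in which $K_i$ is Hopf-linked to $K_{i\pm 1}$ and unlinked from all other $K_j$), and then to show that $\dim_\bC \II^\natural(C_n,p) > 2^{n-1}$ for every $n \ge 5$, contradicting minimality.

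For the identification step, I would start from Proposition 4.5, which yields embedded disks $D_1,\dots,D_n$ bounding $K_1,\dots,K_n$ whose only intersections are the single transverse arcs $\alpha_{i,i+1}=D_i\cap D_{i+1}$, each running from a point of $K_i$ to a point of $K_{i+1}$ (indices modulo $n$). From Propositions 2.3 and 4.2, every $K_i$ is an unknot, each $|\lk(K_i,K_{i+1})|=1$, and every three cyclically consecutive components form a connected sum of two Hopf links. To pin down the global isotopy type, I would delete one component, say $K_n$: by Proposition 2.4 the sublink $L-K_n$ is a connected sum of $(n-2)$ Hopf links and carries a canonical genus-zero Seifert surface $S$; after choosing orientations so that $\lk(K_n,K_1)+\lk(K_n,K_{n-1})=0$ (possible because each linking number is $\pm 1$), the algebraic intersection of $K_n$ with $S$ vanishes, and Proposition 4.1 isotopes $K_n$ off $S$. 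Tracking the resulting position of $K_n$ together with the disk $D_n$ inside the handlebody $S^3\setminus N(S)$---in the spirit of the L6n1 analysis from the proof of Proposition 4.2---then forces $L$ to be isotopic to $C_n$.

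For the instanton-homological bound, I would apply Proposition 3.5 to $C_n$. The multivariable Alexander polynomial $\tilde\Delta_{C_n}$ can be computed by iterated application of the Torres formula starting from $\tilde\Delta_{C_2}=1$; the core numerical claim is that for every $n\ge 5$,
\[
\bigl\|(1-x_1)(1-x_2)\cdots(1-x_n)\,\tilde\Delta_{C_n}(x_1,\dots,x_n)\bigr\| > 2^{2(n-1)},
\]
which by Proposition 3.5 yields $\dim_\bC\II^\natural(C_n,p) > 2^{n-1}$. If this estimate should prove insufficient---as it appears to be for $n=4$, cf.\ Conjecture 2.8 and the discussion following Proposition 2.7---the back-up plan is to mirror the cube-of-resolutions and annular-instanton Floer computation used for L6n1 in the proof of Proposition 4.2, exploiting the $\bZ/n$ symmetry of $C_n$ and the fact that $C_n$ is a braid closure around a standard axis unknot.

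The main obstacle I expect is the topological identification: while the combinatorial intersection data from Proposition 4.5 is almost rigid, carrying out the untwisting uniformly in $n$---and controlling how $K_n$ can wind through the complement of $S$---requires an inductive argument that does not follow directly from the $n=3$ case, which only had to reckon with a single extra component. The Alexander-polynomial estimate, by contrast, is essentially a determinant-growth calculation for an explicit Seifert matrix of the chain link, once the link has been identified, and the growth rate is comfortable enough that the $n \ge 5$ threshold matches the minimum case for which the bound becomes effective.
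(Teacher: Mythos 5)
Your overall strategy matches the paper's (pin down the isotopy class of $L$ using the minimal-position disks and a Seifert surface of the chain $L'=L-K_n$, then rule out minimality via Proposition \ref{prop_I>Alexander}), but there are two genuine gaps. First, the identification step does not terminate in a single link $C_n$. After isotoping $K_n$ off a genus-zero Seifert surface of $L'$, there are two inequivalent choices of such a surface ($S_1$ and $S_2$ in the paper's Figures), and which one $K_n$ can be pushed off depends on the signs of the linking numbers; the two cases produce two distinct links, $L_{n,1-n}$ and $L_{n,2-n}$, which are genuinely non-isotopic (their determinants are $2^{n-1}(n-2)$ and $2^{n-1}|n-4|$ respectively). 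Both must be excluded. Moreover, the step you describe as ``tracking the resulting position of $K_n$ \dots forces $L$ to be isotopic to $C_n$'' is exactly the hard part and is not a routine handlebody argument: one must show that the arc $D_n\cap S$ is determined up to boundary Dehn twists, which the paper does (Lemma \ref{lem_cycle_linking_graph_two_isotopy_class}) by computing the conjugacy class of $K_n$ in $\pi_1(S^3-L')$, solving a word equation to constrain the homotopy class of the arc, and then upgrading homotopy of arcs to isotopy. You correctly flag this as the main obstacle but do not supply the argument.

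Second, the numerical bound is not as ``comfortable'' as you expect, and the threshold $n\ge 5$ is \emph{not} where the estimate first becomes effective. The paper uses the closed formula $\det L_{u,v}=2^{u-1}|u+2v|$, so the single-variable bound in \eqref{eqn_I>single_v_Alexander} gives $\dim\II^\natural\ge 2^{n-1}|n+2v|$, which suffices for $L_{n,1-n}$ ($|2-n|\ge 3$) for all $n\ge5$ and for $L_{n,2-n}$ only when $n\ge 6$: for $L_{5,-3}$ one gets $|n+2v|=1$ and the determinant bound is vacuous. That borderline case requires the full multivariable inequality \eqref{eqn_I>Alexander}, and the paper resorts to an explicit computer calculation (Lemma \ref{lem_lower_bound_I_L5-3}, yielding $\|\cdot\|=300>2^8$). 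Your proposed uniform Torres-formula estimate would have to be carried out and would have to detect this case; as written it is an unproved ``core numerical claim,'' and your stated expectation that the growth is uniformly comfortable for $n\ge5$ is false at $n=5$ for one of the two candidate links.
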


\begin{figure}
	\includegraphics[width = 0.4\textwidth]{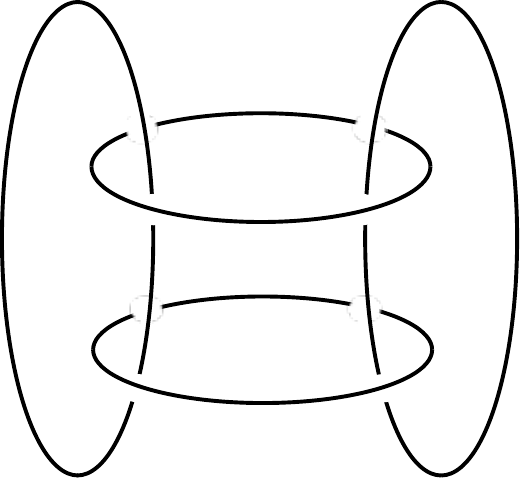}
	\caption{The link L8n8}
	\label{fig_L8n8}
\end{figure}

We will also prove the following result.
\begin{Proposition}
\label{prop_L8n8}
	Suppose $L$ is a link with 4 components such that the linking diagram of $L$ is a cycle and $L$ has minimal $\II^\natural$. Then $L$ is isotopic to L8n8, which is the link shown in Figure \ref{fig_L8n8}.
\end{Proposition}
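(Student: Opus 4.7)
The plan is to combine the disk system of Proposition \ref{prop_disks_in_minimal_position} with a classification of tangles inside a thickened sphere.

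First, by Proposition \ref{prop_tree_implies_Hopf_connected_sum} every pairwise linking number of $L = K_1 \cup K_2 \cup K_3 \cup K_4$ is $0$ or $\pm 1$, and since the linking graph is a $4$-cycle we may relabel so that $\lk(K_i,K_{i+1}) = \pm 1$ (indices mod $4$) while $\lk(K_1,K_3) = \lk(K_2,K_4) = 0$. Applying Proposition \ref{prop_sublink_minimal_I} together with Lemma \ref{lem_2_component_min_I} to the sublink $K_1 \cup K_3$ shows that it is the $2$-component unlink, and similarly each $3$-component sublink of $L$ has a path linking graph, so by Proposition \ref{prop_tree_implies_Hopf_connected_sum} it is a standard chain of three unknots. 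By Proposition \ref{prop_disks_in_minimal_position}, I may choose embedded disks $D_1, D_2, D_3, D_4$ with $\partial D_i = K_i$ such that $D_i \cap D_{i+1}$ is a single arc and $D_1 \cap D_3 = D_2 \cap D_4 = \emptyset$.

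Next, I would use $D_1$ and $D_3$ to put $K_1 \cup K_3$ into standard position: after an ambient isotopy I may assume that $N(D_1)$ and $N(D_3)$ are disjoint round $3$-balls $B_1, B_3$, so that $A := S^3 \setminus \operatorname{int}(B_1 \cup B_3) \cong S^2 \times [0,1]$. Because each of $K_2, K_4$ meets $D_1$ and $D_3$ transversely in exactly one point, $K_2 \cap A$ and $K_4 \cap A$ are each a pair of arcs, and together they form a $4$-strand tangle $T \subset A$ with $8$ endpoints on $\partial A$, distributed with $4$ on each of $\partial B_1, \partial B_3$.

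The third and most delicate step is to classify the tangle $T$. I would use the remaining two disks $D_2$ and $D_4$, which after cutting along $\partial B_1 \cup \partial B_3$ become a pair of disjoint compact planar surfaces $D_2 \cap A$ and $D_4 \cap A$ in $A$, each meeting $\partial A$ in a controlled way. An innermost-circle argument on $D_i \cap \partial A$ removes any circle intersections by surgery rel $K_i$, and an outermost-arc argument progressively straightens the arcs. The constraints that $D_2 \cap D_4 = \emptyset$ and that $D_2 \cap D_1$, $D_2 \cap D_3$ are each single arcs force $D_2$ and $D_4$, up to isotopy rel $K_1 \cup K_3 \cup D_1 \cup D_3$, to lie in two ``parallel'' sheets of a standard position; this in turn forces $T$ to be the unique tangle whose closure via the balls $B_1, B_3$ reproduces the link L8n8 of Figure \ref{fig_L8n8}.

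The main obstacle will be the third step: one must rule out residual braiding of $K_2$ and $K_4$ around one another that is compatible with $\lk(K_2,K_4) = 0$. To handle this, I plan to exploit the rigidity coming from the fact that each $3$-sublink is a standard chain of unknots, possibly combined with Proposition \ref{prop_disjoint_from_Seifert_surface} applied to a suitable sublink (with an appropriate choice of orientations on the disks $D_i$) to eliminate unwanted crossings. This, together with the standard cut-and-paste techniques, should suffice to reduce the tangle $T$ to its canonical form and identify $L$ with L8n8.
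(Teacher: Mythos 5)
Your setup (disks from Proposition \ref{prop_disks_in_minimal_position}, a $4$-strand tangle in $S^2\times[0,1]$ between neighborhoods of $D_1$ and $D_3$) is reasonable, but the third step, where all the content lies, has a genuine gap, and the uniqueness you assert there is false. The purely topological conditions you extract from minimality (each $D_i$ embedded, $D_2\cap D_4=\emptyset$, adjacent disks meeting in single arcs, every proper sublink a standard chain or unlink) do not determine the tangle: they leave unresolved exactly the ``residual braiding'' you flag, which appears as an integer twisting parameter. Concretely, the link $L_{4,-3}$ of Section \ref{subsec_isotopy_L_cycle} satisfies every condition your argument uses --- it is one of the two outputs of the paper's own topological analysis (Lemma \ref{lem_cycle_linking_graph_two_isotopy_class}) --- yet it is not isotopic to L8n8, since $\det L_{4,-3}=16$ while $\det L_{4,-2}=\det(\text{L8n8})=0$. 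So no innermost-circle/outermost-arc manipulation of $D_2$ and $D_4$ can force ``the unique tangle'' you want; as written your argument would also ``prove'' $L_{4,-3}\cong\text{L8n8}$.

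Two further inputs, both using minimality of $\II^\natural$, are needed and are missing. First, Proposition \ref{prop_disjoint_from_Seifert_surface} must be applied to a three-component chain sublink such as $K_1\cup K_2\cup K_3$ (it cannot be applied to the split pair $K_1\cup K_3$ that organizes your decomposition, since that sublink is not a connected sum of Hopf links), yielding a genus-zero fibered Seifert surface $S_i$ of the chain disjoint from $K_4$; one then shrinks $K_4$ onto the arc $D_4\cap S_i$ and solves a conjugacy equation in $\pi_1(S^3-L')$ (the analogue of Lemma \ref{lem_free_group_3}) to pin down that arc up to boundary Dehn twists. This, not the disk combinatorics, is what controls the twisting, and it still leaves the two candidates $L_{4,-3}$ and $L_{4,-2}$. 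Second, $L_{4,-3}$ must then be excluded by a Floer-theoretic computation: Proposition \ref{prop_I>Alexander} gives $\dim_\bC\II^\natural(L_{4,-3},p;\bC)\ge\det(L_{4,-3})=16>2^{3}$, contradicting minimality. Your proposal gestures at the first ingredient but does not execute it, and omits the second entirely.
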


It is not clear to the authors how to compute the instanton Floer homology for the link L8n8. We conjecture that L8n8 does not have minimal $\II^\natural$.

\begin{Conjecture}
\label{conj_l8n8}
The link L8n8 does not have minimal $\II^\natural$.
\end{Conjecture}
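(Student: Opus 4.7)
The plan is to adapt the proof of non-minimality carried out for the link L6n1 at the end of the proof of Proposition \ref{prop_2_3_components}. There, a presentation of L6n1 as the closure of a $2$--braid together with an axis unknot enabled two applications of the unoriented skein exact triangle, producing a square of resolutions whose four vertices were computed via the identification $\II^\natural(K\cup U,p)\cong \AHI(K)$ of \cite[(4.5)]{AHI}; the four edges were matched with the pair--of--pants maps in annular Khovanov homology via \cite[Proposition 5.14]{AHI}, and the single diagonal correction was killed by a $\bZ/2$--grading obstruction. I would first seek an analogous cabling description of L8n8. Inspecting Figure \ref{fig_L8n8}, the link admits an order--two rotational symmetry identifying two parallel Hopf pairs, so a natural candidate is to distinguish one component of L8n8, realize the remaining three components as an annular link in the complementary solid torus, and then resolve the two crossings of the distinguished component with that solid torus.

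This produces a square of resolutions whose corner links have at most three components, to which Propositions \ref{prop_sublink_minimal_I} and \ref{prop_2_3_components} apply; the annular identification of \cite[(4.5)]{AHI} then makes each vertex computable, and the edge pair--of--pants maps lift to annular Khovanov pair--of--pants maps of \cite{APS}, for which nonvanishing can be verified by an explicit combinatorial calculation. Summing vertex dimensions and subtracting twice the total rank contribution of the edges will give a lower bound on $\dim \II^\natural(\text{L8n8},p)$ that, provided all edges are nontrivial and the diagonal correction vanishes, strictly exceeds $8$ and proves the conjecture.

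The main obstacle will be ruling out the diagonal correction in this square. Unlike in the L6n1 case, the corner Floer groups can support multiple $\bZ/2$--graded summands, so the parity argument of \cite[Lemma 9.2]{KM_filtration} no longer applies directly; a finer grading--by the $\AHI$--weight grading coming from the $\muu(S^2)$--action--is likely required. If this geometric route proves too delicate, a purely computational alternative is to invoke the multi-variable Alexander polynomial bound of Proposition \ref{prop_I>Alexander}: by \eqref{eqn_I>Alexander} it suffices to verify
\begin{equation*}
\bigl\|(1-x_1)(1-x_2)(1-x_3)(1-x_4)\,\tilde\Delta_{\text{L8n8}}(x_1,x_2,x_3,x_4)\bigr\|>64,
\end{equation*}
which reduces Conjecture \ref{conj_l8n8} to an explicit polynomial check. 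Failing that, one can attempt to compute $\KHI(\text{L8n8})$ via sutured manifold decompositions and then conclude from \eqref{eqn_bound_I_from_KHI}.
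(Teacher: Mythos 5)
This statement is an open conjecture in the paper, not a theorem: the authors explicitly say they do not know how to compute $\II^\natural$ of L8n8, and Remark \ref{rmk_after_conj} only records evidence for the conjecture. Your proposal does not close that gap. Most concretely, your ``purely computational alternative'' is already known to fail: Remark \ref{rmk_after_conj} states that Proposition \ref{prop_I>Alexander} yields no non-trivial lower bound for L8n8. Indeed L8n8 is $L_{4,-2}$, whose determinant is $2^{3}|4+2(-2)|=0$, and the multi-variable bound $\|(1-x_1)\cdots(1-x_4)\tilde\Delta_{\mathrm{L8n8}}\|$ does not exceed $64$, so \eqref{eqn_I>Alexander} cannot rule out $\dim\II^\natural=8$. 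Your third fallback --- computing $\KHI(\mathrm{L8n8})$ --- is exactly what the authors could not do; they only computed $\HFK(\mathrm{L8n8};\bZ/2)=68$, which implies the conjecture only modulo the (open) Kronheimer--Mrowka conjecture that $\KHI\cong\HFK$.

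Your primary geometric route is a reasonable research direction but is not a proof as written. You yourself flag the essential obstruction: in the L6n1 argument the diagonal map in the square of resolutions is killed by a $\bZ/2$--parity argument that depends on every corner group being supported in a single $\bZ/2$--grading, and you give no reason this persists for your proposed resolution of L8n8. Beyond that, several steps are asserted rather than established: that the corner links of your square are computable via \cite[(4.5)]{AHI} and Proposition \ref{prop_2_3_components} (resolving crossings generally produces links that need not be sublinks of $L$ or have minimal $\II^\natural$), that all edge maps are nonzero, and that ``summing vertex dimensions and subtracting twice the edge contributions'' bounds the homology of the total complex from below --- the last is not a valid rank estimate for an iterated mapping cone without additional information about the compositions. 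As it stands, every branch of the proposal either terminates in an acknowledged open problem or in a computation the paper shows cannot succeed.
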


\begin{remark}
\label{rmk_after_conj}
If Conjecture \ref{conj_l8n8} is true, then by Propositions \ref{prop_2_3_components}, \ref{prop_cycle_length_greater_than_5}, \ref{prop_L8n8}, the linking graph of a link with minimal $\II^\natural$ cannot be a cycle. Hence by Proposition \ref{prop_sublink_minimal_I}, if a link has minimal $\II^\natural$, then its linking graph is a forest. By Proposition \ref{prop_tree_implies_Hopf_connected_sum}, this link is a forest of unknots. This would imply the main theorem by Proposition \ref{prop_abelian_rep_imply_minimal_I_Z}.

When $L$ is the link L8n8, Proposition \ref{prop_I>Alexander} does not yield any non-trivial lower bound for $\dim \II^\natural(L,p)$ from the Alexander polynomials.
 By \eqref{eqn_bound_I_from_KHI}, Conjecture \ref{conj_l8n8} would be true if 
 $$\dim_\bC\KHI(L;\bC)>64$$
  for the link L8n8. Kronheimer and Mrowka \cite[Conjecture 7.25]{KM:suture} conjectured that $\KHI(L)$ is isomorphic to the Heegaard Knot Floer homology defined by Ozsv\'ath-Szab\'o and Rasmussen \cite{OS:HFK, Ras:HFK}, which is more computable. The authors have computed the rank of $\HFK(L;\bZ/2)$ for the link L8n8 using grid homology and found that $\rank\HFK(L;\bZ/2) = 68 > 64$.
\end{remark}

The proofs of Proposition \ref{prop_cycle_length_greater_than_5} and Proposition  \ref{prop_L8n8} are divided into two parts. The first part shows that for a given $n\ge 4$, if $L$ is a link with $n$ components such that the linking graph is a cycle, then the topological properties from  Section \ref{sec_top_properties} imply that $L$ must be isotopic to one of the two special links.  The second part shows that when $n\neq 4$, both links have non-minimal $\II^\natural$. The first part is essentially  contained in \cite{XZ:forest}, and we give a sketch of the argument in Section \ref{subsec_isotopy_L_cycle}.  The second part is given in Section \ref{subsec_lower_bound_L_n_1-n_2-n}.

\subsection{Fixing the isotopy class of $L$}
\label{subsec_isotopy_L_cycle}

Suppose $L$ is a link with $n\ge 4$ components such that the linking graph is a cycle, write  $L=K_1\cup\cdots\cup K_n$, and let $L':=K_1\cup\cdots\cup K_{n-1}$. By Proposition \ref{prop_sublink_minimal_I} and Proposition \ref{prop_tree_implies_Hopf_connected_sum}, we may assume that the link $L'$ is given by Figure \ref{fig_standard_chain}. 

By Proposition \ref{prop_disks_in_minimal_position}, there exists a sequence of embedded disks $D_1,\cdots,D_n$, such that 
\begin{enumerate}
	\item $\partial D_i=K_i$,
	\item $D_i$ and $K_j$ intersect transversely for all $i\neq j$,
	\item $D_i$ and $D_j$ are disjoint if $|i-j|\neq 1$ or $n-1$, and intersect transversely at an arc otherwise.
\end{enumerate} 
Since $n\ge 4$ and the linking graph of $L$ is a cycle, we have $D_i\cap D_j\cap D_k = \emptyset$ for all distinct $i,j,k$.

After an isotopy on $D_1\cup\cdots\cup D_{n-1}$, we may assume that each component of $L'$ in Figure \ref{fig_standard_chain} is contained in a (flat) plane, and that the disk $D_i$ ($1\le i\le n-1$) is the disk bounded by $K_i$ in the corresponding plane. Since $D_n$ intersects each of $D_1$ and $D_{n-1}$ at an arc and is disjoint from $D_2,\cdots, D_{n-2}$, the component $K_n$ can be isotoped in $S^3-L'$ to a position described by Figure \ref{fig_cable}.

\begin{figure}
\includegraphics[width=0.8\textwidth]{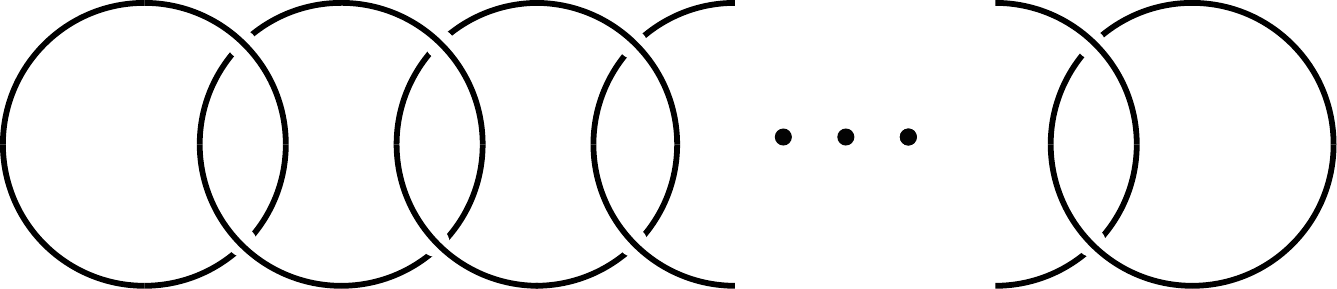}
\caption{The link $L'$}
\label{fig_standard_chain}

\vspace{\baselineskip}

\begin{overpic}[width=0.8\textwidth]{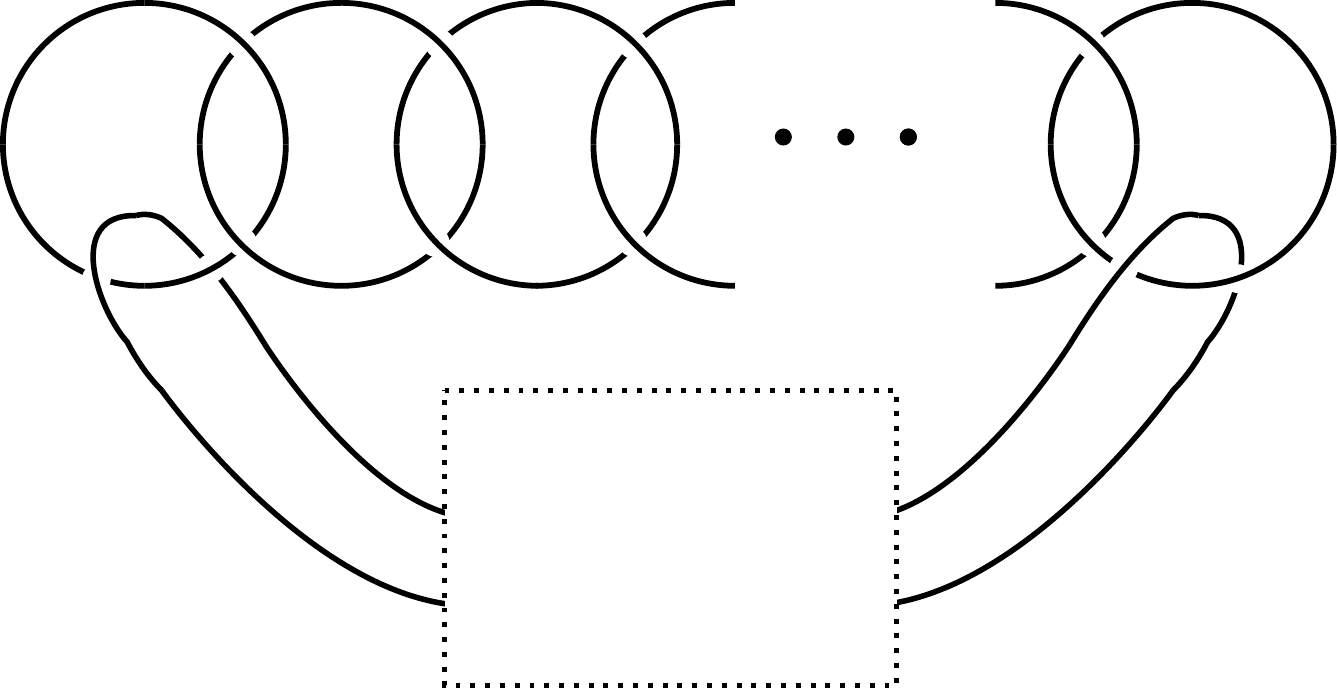}
\put(25,50){$K_n$}
\put(0,140){$L'$}
\end{overpic}
\caption{Isotopy class of $K_n$ in $S^3-L'$}
\label{fig_cable}
\end{figure}

On the other hand, let $S_1$ be the Seifert surface of $L'$ shown in Figure \ref{fig_S1}, and let $S_2$ be the Seifert surface of $L'$ shown in Figure \ref{fig_S2}.  
By the assumption on the linking graph of $L$, the algebraic intersection number of $K_n$ with either $S_1$ or $S_2$ is zero.
By Proposition \ref{prop_disjoint_from_Seifert_surface}, after an isotopy of $K_n$ in $S^3-L'$, the component $K_n$ is disjoint from either $S_1$ or $S_2$.

\begin{figure}
\includegraphics[width=0.8\textwidth]{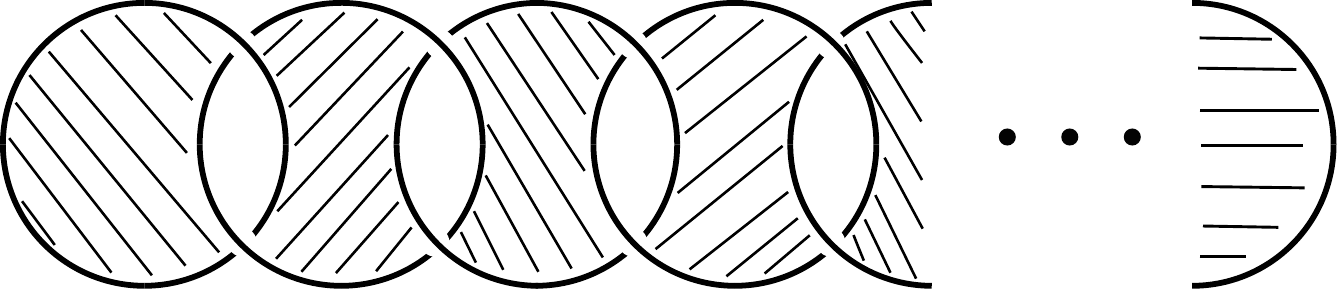}
\caption{Seifert surface $S_1$}
\label{fig_S1}

\vspace{\baselineskip}

\includegraphics[width=0.8\textwidth]{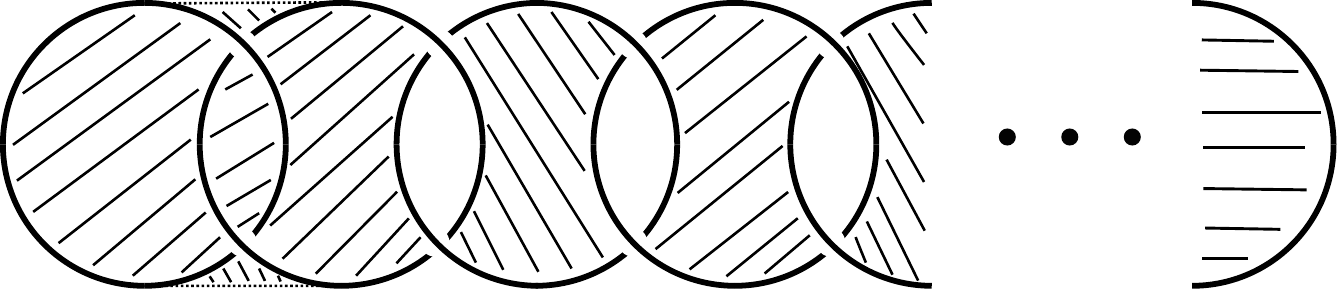}
\caption{Seifert surface $S_2$}
\label{fig_S2}
\end{figure}

Lemma \ref{lem_cycle_linking_graph_two_isotopy_class} below shows that the properties listed above almost uniquely specifies the isotopy class of $L$.  We need to introduce one more notation before stating the lemma.
For $u\ge 3, v\in\bZ$, we define the link $L_{u,v}$ as in \cite{XZ:forest}: 
if $v\ge 0$, then $L_{u,v}$ denotes the link given by Figure \ref{fig_Luv1} with $u$ components and $v$ crossings in the dotted rectangle; if $v<0$, then $L_{u,v}$ denotes the link given by Figure \ref{fig_Luv2} with $u$ components and $-v$ crossings in the dotted rectangle.
\begin{figure}  
  \includegraphics[width=0.7\linewidth]{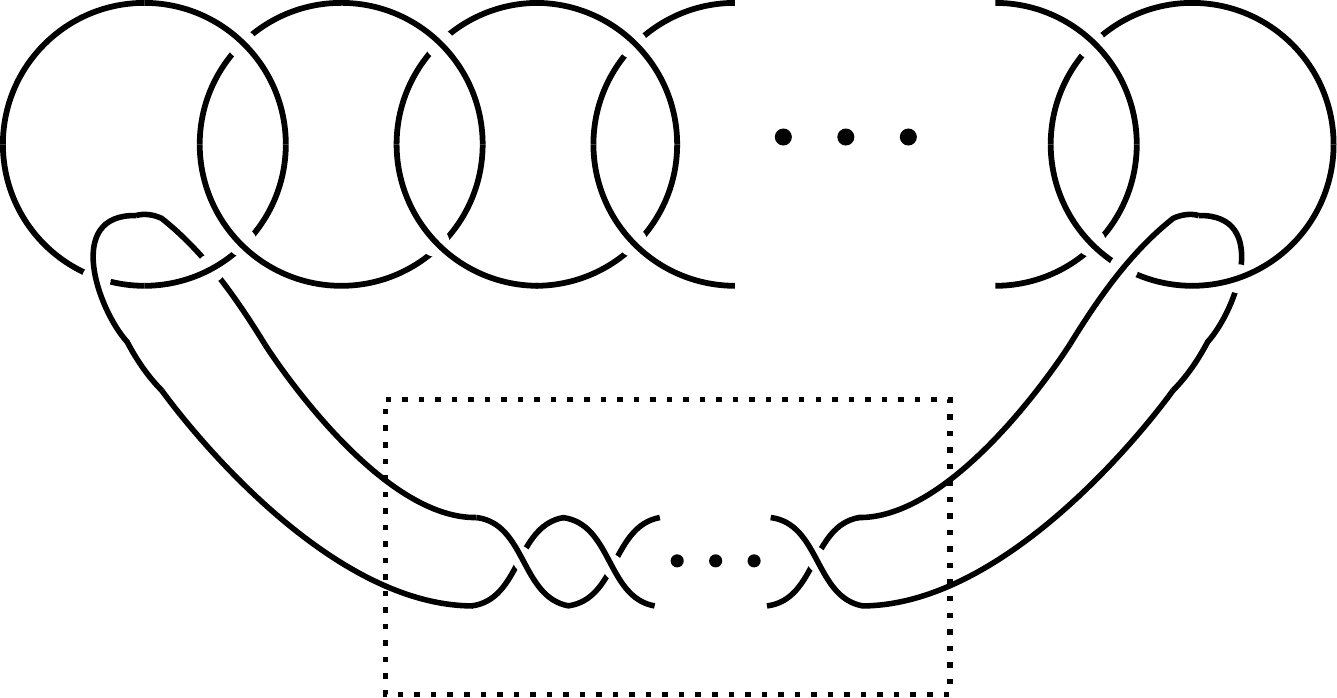}
  \caption{$L_{u,v}$ when $v\ge 0$.}
  \label{fig_Luv1}
  \vspace{\baselineskip}
  \includegraphics[width=0.7\linewidth]{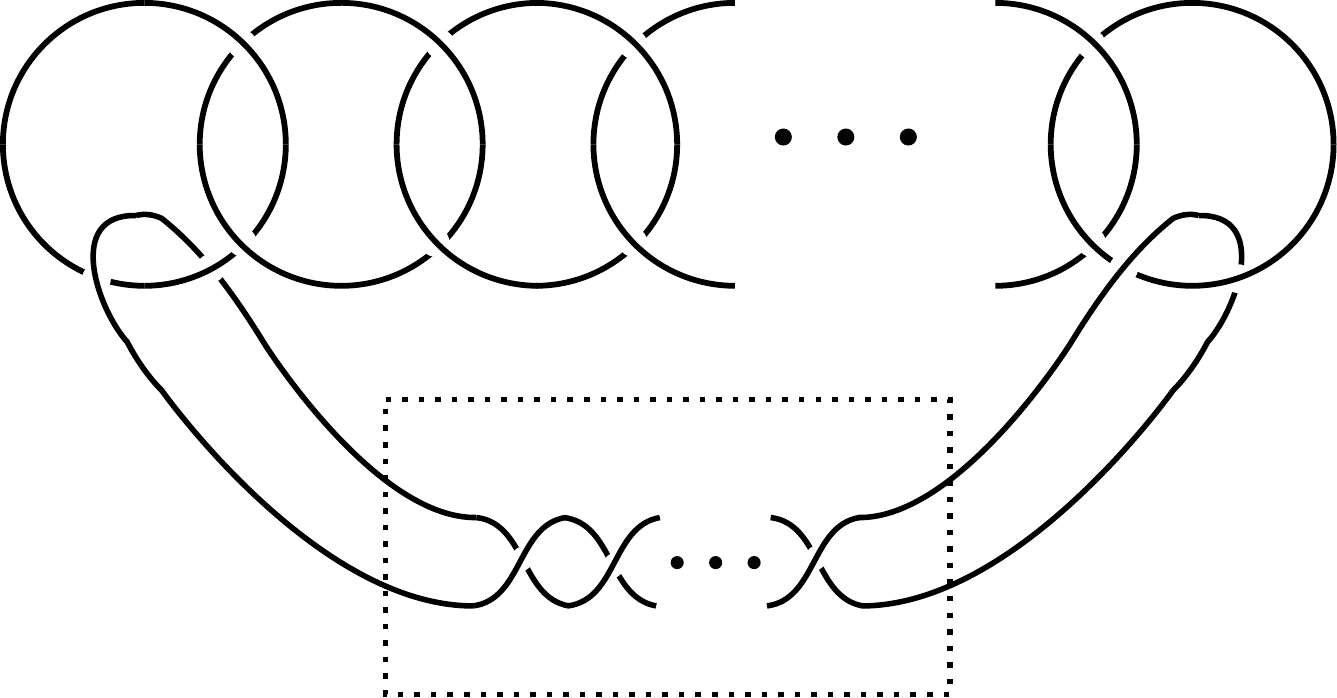}
  \caption{$L_{u,v}$ when $v<0$.}
  \label{fig_Luv2}
\end{figure}

\begin{Lemma}
\label{lem_cycle_linking_graph_two_isotopy_class}
Suppose $n\ge 3$ and  a link $L=K_1\cup \cdots\cup K_n$ satisfies the following properties:
\begin{enumerate}
	\item $L':=K_1\cup\cdots K_{n-1}$ is the link given by Figure \ref{fig_standard_chain},
	\item $K_n$ bounds a disk that is disjoint from $K_2\cup\cdots\cup K_{n-2}$, and intersects each of $K_1$ and $K_{n-1}$ transversely at one point,
	\item $K_n$ can be homotoped in $S^3-L'$ to a position described by Figure \ref{fig_cable}.
\end{enumerate}
Then we have
\begin{enumerate}
	\item[(i)] if $K_n$ can be isotoped in $S^3-L'$ to be disjoint from $S_1$, then the link $L$ is isotopic to the link $L_{n,1-n}$,
	\item[(ii)] if $K_n$ can be isotoped in $S^3-L'$ to be disjoint from $S_2$, then the link $L$ is isotopic to the link $L_{n,2-n}$.
\end{enumerate}
\end{Lemma}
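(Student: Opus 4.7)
The plan is to analyze the isotopy class of $K_n$ in $S^3 - L'$ using conditions (2) and (3) together with the disjointness from $S_1$ or $S_2$ to pin down the link type. First, condition (3) places $K_n$ in a tubular neighborhood of a ``cabling curve'' that runs once along the chain $L'$, passing through each of its clasps. Outside small neighborhoods of $K_1$ and $K_{n-1}$, this cabling curve follows $L'$ closely, and its isotopy class within such a neighborhood is determined by a framing, i.e.\ an integer twist parameter, together with finitely many choices of how $K_n$ weaves through the two end clasps.

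Next, I would use condition (2): the existence of the spanning disk $D_n$ meeting $K_1$ and $K_{n-1}$ each transversely once and disjoint from $K_2, \ldots, K_{n-2}$. This forces $|\lk(K_n, K_1)| = |\lk(K_n, K_{n-1})| = 1$ and $\lk(K_n, K_i) = 0$ otherwise, which combined with the cable constraint cuts the family of possible isotopy classes down to a small discrete set indexed by the number of full twists $K_n$ makes along the cable direction.

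The disjointness condition from $S_1$ or $S_2$ then resolves the remaining ambiguity. The surfaces $S_1$ and $S_2$ are Seifert surfaces of $L'$ that differ by a ``flip'' corresponding to opposite twist senses at the ends of the chain. Cutting $S^3 - L'$ along $S_1$ (respectively $S_2$) yields a $3$--manifold in which $K_n$, subject to conditions (2)--(3), sits in an essentially unique way; a direct diagrammatic identification then matches the resulting link with $L_{n,1-n}$ in case (i), and with $L_{n,2-n}$ in case (ii), by comparing with Figures \ref{fig_Luv1}--\ref{fig_Luv2}.

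The main obstacle is this final identification, which requires carefully tracking signs and twist counts as $K_n$ passes through each of the $n-1$ clasps of the chain $L'$. In particular, one must verify that forcing disjointness from $S_1$ rather than $S_2$ shifts the cable framing by exactly one full twist, so as to account for the difference $(1-n) - (2-n) = -1$ between the two link types. This is essentially a diagrammatic bookkeeping exercise in the spirit of \cite[Proposition 7.6 and the discussion surrounding it]{XZ:forest}; once the pictures are drawn out, the crossings in the dotted rectangles of Figures \ref{fig_Luv1}--\ref{fig_Luv2} can be read off directly.
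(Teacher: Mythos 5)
There is a genuine gap at the very first step. Condition (3) only says that $K_n$ can be \emph{homotoped} in $S^3-L'$ to the position in Figure \ref{fig_cable}; this is a statement about the conjugacy class of $K_n$ in $\pi_1(S^3-L')$ and nothing more. It does not place $K_n$ in a tubular neighborhood of a ``cabling curve,'' and it does not reduce the set of possible isotopy classes to a discrete family indexed by a framing integer plus finitely many weaving choices. Knots in a link complement that are freely homotopic to a fixed curve form, in general, a vast collection of isotopy classes, and the linking-number constraints from condition (2) do not cut this down in the way you assert. The entire difficulty of the lemma is precisely the passage from homotopy data to isotopy data, and your proposal assumes that passage rather than proving it.

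The paper's proof supplies the missing mechanism, and it runs through the disk of condition (2) rather than through a cabling picture. One first isotopes $K_n$ off $S_1$, then perturbs the disk $D$ bounded by $K_n$ so that $D\cap S_1$ consists of one properly embedded arc $\gamma$ together with circles; shrinking $K_n$ along $D$ into a neighborhood of $\gamma$ shows that the isotopy class of $K_n$ in $S^3-L'$ is determined by the isotopy class of $\gamma$ in $S_1$ rel $\partial S_1$. The homotopy hypothesis (3) is then used only to constrain the \emph{homotopy} class of $\gamma$ rel endpoints, by solving an explicit equation in $\pi_1(S^3-L')$ (this is where $g_1^{\epsilon_1}g_{n-1}^{\epsilon_2}$ enters), and the upgrade from homotopy to isotopy happens on the surface $S_1$, where homotopic properly embedded arcs are isotopic. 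Finally one checks that all the resulting standard arcs yield the same link, identified with $L_{n,1-n}$ (resp.\ $L_{n,2-n}$). Your proposed alternative of ``cutting $S^3-L'$ along $S_1$ and observing that $K_n$ sits in an essentially unique way'' is not an argument: without the reduction to an arc on $S_1$ and the group-theoretic computation pinning down that arc, there is no reason for uniqueness. As written, the proposal does not establish the lemma.
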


\begin{remark}
	Although the statement of Lemma \ref{lem_cycle_linking_graph_two_isotopy_class} holds for all $n\ge 3$, we only need the case for $n\ge 5$ for the proof of Proposition \ref{prop_cycle_length_greater_than_5}, and the case $n=4$ for the proof of Proposition \ref{prop_L8n8}.
\end{remark}

\begin{proof}[Sketch of proof]
	The proof of the lemma was given in \cite[Sections 8,9,11]{XZ:forest} without explicitly stating the result. We give an outline of the proof and refer the reader to \cite{XZ:forest} for details. The argument does not use gauge theory. We will focus on Case (i) where $K_n$ is disjoint from $S_1$ up to an isotopy in $S^3-L'$. The other case is similar.
	
	Isotope $K_n$ in $S^3-L'$ such that $K_n$ is disjoint from $S_1$.
	Let $D$ be the disk given by Assumption (2), perturb $D$ such that $D$ intersects $S_1$ and $\partial S$ transversely.  Then $D\cap S_1$ is the disjoint union of an arc and finitely many circles. We denote the arc component by $\gamma$. One can shrink $K_n$ along $D$ into a small neighborhood of $\gamma$, therefore the isotopy class of $K_n$ in $S^3-L'$ is uniquely determined by the isotopy class of $\gamma$ in $S_1$ (relative to $\partial S_1$). 
	
	The homotopy class of $K_n$ defines a conjugacy class in $\pi_1(S^3-L')$, and Assumption (3) implies that the homotopy class of $K_n$ is conjugate to $g_1^{\epsilon_1}g_{n-1}^{\epsilon_2}$, where $\epsilon_i=\pm 1$ (see \cite[Section 8]{XZ:forest} for the definitions of $g_1,g_{n-1}\in\pi_1(S^3-L')$).   By solving an equation in $\pi_1(S^3-L')$, this implies that $\gamma$ is \emph{homotopic} to one of the standard arcs in $S_1$ relative to $\partial S_1=L'$ (see \cite[Lemma 11.2]{XZ:forest}), and hence it is \emph{isotopic} to one of these arcs (see \cite{feustel1966homotopic} or \cite[Proposition 9.1]{XZ:forest}).  Recall that the isotopy class of $L$ is determined by the arc $\gamma$.  It turns out that all these standard arcs give the same link $L$ up to isotopy (see \cite[Corollary 11.3]{XZ:forest}), and this link is isotopic to $L_{n,1-n}$ (see \cite[Lemma 11.7]{XZ:forest}).
	
	A similar argument shows that $L$ is isotopic to $L_{n,2-n}$ in Case (ii).
\end{proof}

\subsection{Lower bound for $\dim\II^\natural$}
\label{subsec_lower_bound_L_n_1-n_2-n}
\begin{Proposition}
\label{prop_lower_bound_L_n_1-n_2-n}
	Suppose $m\ge 4$, $n\ge 5$. Then for each $p\in L_{m,1-m}$ and $q\in L_{n,2-n}$, we have $$\dim_\bC\II^\natural(L_{m,1-m},p;\bC)>2^{m-1},$$
	 $$\dim_\bC\II^\natural(L_{n,2-n},q;\bC)>2^{n-1}.$$
\end{Proposition}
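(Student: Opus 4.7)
The natural approach is to invoke Proposition \ref{prop_I>Alexander}, which reduces the problem to an Alexander polynomial computation. Writing $L = L_{m,1-m}$ (respectively $L_{n,2-n}$), we would try to show
$$
\|(1-x_1)(1-x_2)\cdots(1-x_n)\,\tilde{\Delta}_L(x_1,\ldots,x_n)\| > 2^{2(n-1)},
$$
(and similarly for $m$), since by Proposition \ref{prop_I>Alexander} this immediately implies $\dim_\bC\II^\natural(L,p;\bC) > 2^{n-1}$. If the single-variable version already suffices, then the simpler inequality $\|(1-x)^{n-1}\Delta_L(x)\| > 4^{n-1}$ could be used instead.

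First, one would compute the Alexander polynomial of the chain links $L_{n,1-n}$ and $L_{n,2-n}$. There are several equivalent approaches: the Wirtinger presentation read off Figures \ref{fig_Luv1} and \ref{fig_Luv2} gives a very structured set of relations, and Fox calculus on this presentation yields $\tilde{\Delta}_L$ as a determinant of an explicit matrix. Alternatively, since these are braid-like closures of a small-width pattern (each component is linked only with its two cyclic neighbors and the ``twist box'' sits in a single location), one can identify the relevant Seifert matrix and diagonalize it block-by-block, or use the Torres conditions to constrain $\tilde\Delta$. A third option is an inductive/skein calculation, where resolving a crossing inside the twist box reduces the computation of $\tilde\Delta_{L_{n,v}}$ to $\tilde\Delta_{L_{n,v\pm 1}}$, and the case of small $|v|$ is handled by hand.

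With the explicit form of $\tilde{\Delta}_L$ in hand, it remains to estimate the $\ell^1$-norm after multiplying by $\prod_i(1-x_i)$. Here the cyclic symmetry of the link should make it possible to write the polynomial in a form where the number of monomials with nontrivial coefficients is easily counted; one expects a growth rate on the order of $n\cdot 2^{n-1}$ (or better) in the norm, and verifying that this exceeds $2^{2(n-1)} = 4^{n-1}$ for the specified ranges is a finite check. A clean induction on $n$ would likely handle both families uniformly once the base cases $m=4$ (for $L_{m,1-m}$) and $n=5$ (for $L_{n,2-n}$) are confirmed by direct evaluation.

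The main obstacle is the delicate threshold behavior. The proposition requires the inequality for $L_{m,1-m}$ starting at $m=4$, but only from $n=5$ onward for $L_{n,2-n}$; the missing case $L_{4,-2}$ is precisely L8n8, and Remark \ref{rmk_after_conj} explicitly acknowledges that its Alexander polynomials are too small to yield any bound. Thus the computation must be tight enough to separate $L_{4,-3}$ from $L_{4,-2}$: a single extra full twist of the closing component must push the norm of $\prod_i(1-x_i)\tilde\Delta_L$ across the $4^{n-1}$ threshold. Getting both the precise polynomial form and a clean monotonicity in $|v|$ (so the bound improves as the twist count grows) is the key technical point; once that is established, the proposition follows directly from Proposition \ref{prop_I>Alexander}.
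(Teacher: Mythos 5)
Your proposal correctly identifies the key tool (Proposition \ref{prop_I>Alexander}) and the right general strategy, but it stops at the point where the actual work happens: no Alexander polynomial is computed, and the claimed ``growth rate on the order of $n\cdot 2^{n-1}$'' is a guess rather than an established bound. The quantitative heart of the proposition is exactly the verification that the norm clears the threshold, and that is absent. The paper's route is both simpler and more concrete than what you sketch: it cites the closed-form determinant $\det L_{u,v}=2^{u-1}|u+2v|$ (proved in \cite[Section 10]{XZ:forest}) and uses only the \emph{single-variable} consequence $\dim_\bC\II^\natural(L,p;\bC)\ge \det(L)$ from \eqref{eqn_I>single_v_Alexander}. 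This gives $\det L_{m,1-m}=2^{m-1}(m-2)>2^{m-1}$ for all $m\ge 4$ and $\det L_{n,2-n}=2^{n-1}(n-4)>2^{n-1}$ for all $n\ge 6$, so no multivariable computation and no induction are needed for these cases.

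The one case your plan would have to confront head-on is $L_{5,-3}$, where the determinant equals $16=2^{n-1}$ exactly, so the single-variable bound fails to be strict. Your proposal does gesture at the multivariable inequality \eqref{eqn_I>Alexander} as the primary route, which in principle covers this case, but you give no way to evaluate $\|(1-x_1)\cdots(1-x_5)\tilde\Delta_{L_{5,-3}}\|$; the paper resorts to a computer calculation (Lemma \ref{lem_lower_bound_I_L5-3}) showing this norm is $300$, whence $\dim\ge\lceil 300/16\rceil=19>16$. So the gap is twofold: (i) the general-$n$ bound is asserted rather than derived, whereas a known determinant formula settles it in one line; and (ii) the genuinely delicate case is not $L_{4,-2}$ versus $L_{4,-3}$ as you suggest (the $L_{m,1-m}$ family is unproblematic for all $m\ge4$) but the single link $L_{5,-3}$, which requires a separate, sharper computation that your proposal does not supply.
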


\begin{proof}
	It was proved in \cite[Section 10]{XZ:forest} that 
$\det L_{u,v}=2^{u-1}|u+2v|.$  Therefore, the desired result follows from Proposition \ref{prop_I>Alexander}  except for the link $L_{5,-3}$. The case for $L_{5,-3}$ is proved by Lemma \ref{lem_lower_bound_I_L5-3} in the appendix.
\end{proof}

\begin{proof}[Proof of Proposition \ref{prop_cycle_length_greater_than_5} and Proposition \ref{prop_L8n8}]
	Let $n\ge 4$, and suppose $L$ is an $n$--component link with minimal $\II^\natural$ such that the linking graph of $L$ is a cycle. By Section \ref{subsec_isotopy_L_cycle}, $L$ is isotopic to $L_{n,1-n}$ or $L_{n,2-n}$. By Proposition \ref{prop_lower_bound_L_n_1-n_2-n}, we must have $n=4$ and $L$ is isotopic to $L_{4,-2}$.  The link $L_{4,-2}$ is isotopic to L8n8.
\end{proof}

\section{Eliminate $G_0$}
\label{sec_eliminate_G0}
Recall that $G_0$ is the graph in Figure \ref{fig_G0_graph}. This section proves Proposition \ref{prop_no_G0}. 

\begin{repProposition}{prop_no_G0}
If $L$ is a link with 6 components such that its linking graph is isomorphic to $G_0$, then $L$ does not have minimal $\II^\natural$.
\end{repProposition}

The strategy of the proof of Proposition \ref{prop_no_G0} is similar to that of Proposition \ref{prop_cycle_length_greater_than_5}.  For the rest of this section, we use $L$ to denote a hypothetical 6-component link that has minimal $\II^\natural$ and has linking graph isomorphic to $G_0$. We first use the results from Section \ref{sec_top_properties} to show that $L$ must be isotopic to a particular link, then use Proposition \ref{prop_I>Alexander} to show that this link does not have minimal $\II^\natural$, which yields a contradiction.

This section is organized as follows. In Section \ref{subsec_G0_topology}, we obtain several topological properties of $L$ using results from Section \ref{sec_top_properties}. In Section \ref{subsec_G0_isotopy_theta}, we show that these topological properties imply that $L$ is isotopic to the link given by Figure \ref{fig_link_G0}. We then finish the proof by invoking Lemma \ref{lem_lower_bound_I_G0} from the appendix, which states that the link in Figure \ref{fig_link_G0} does not have minimal $\II^\natural$. Section  \ref{subsec_G0_lemma_free_group} proves a technical lemma that is used in Section \ref{subsec_G0_isotopy_theta}.

\subsection{Topological properties of $L$}
\label{subsec_G0_topology}
Label the components of $L$ by $K_1,\cdots,K_6$ as in Figure \ref{fig_linking_graph_with_labels}. Let $L':=K_3\cup K_4\cup K_5\cup K_6$, let $L'':=K_1\cup K_2$. By Proposition \ref{prop_sublink_minimal_I} and Proposition \ref{prop_tree_implies_Hopf_connected_sum}, the link $L'$ is isotopic to the link given by Figure \ref{fig_tree_in_G0}.

\begin{figure}
\vspace{\baselineskip}
\begin{overpic}[width=0.4\textwidth]{figures/G0}
\put(-5,-12){$K_1$}
\put(65,-12){$K_6$}
\put(135,-12){$K_2$}
\put(-5,75){$K_3$}
\put(65,75){$K_4$}
\put(135,75){$K_5$}
\end{overpic}
\vspace{\baselineskip}
\caption{The linking graph of $L$}
\label{fig_linking_graph_with_labels}
\end{figure}

\begin{figure}
\vspace{\baselineskip}
\begin{overpic}[width=0.5\textwidth]{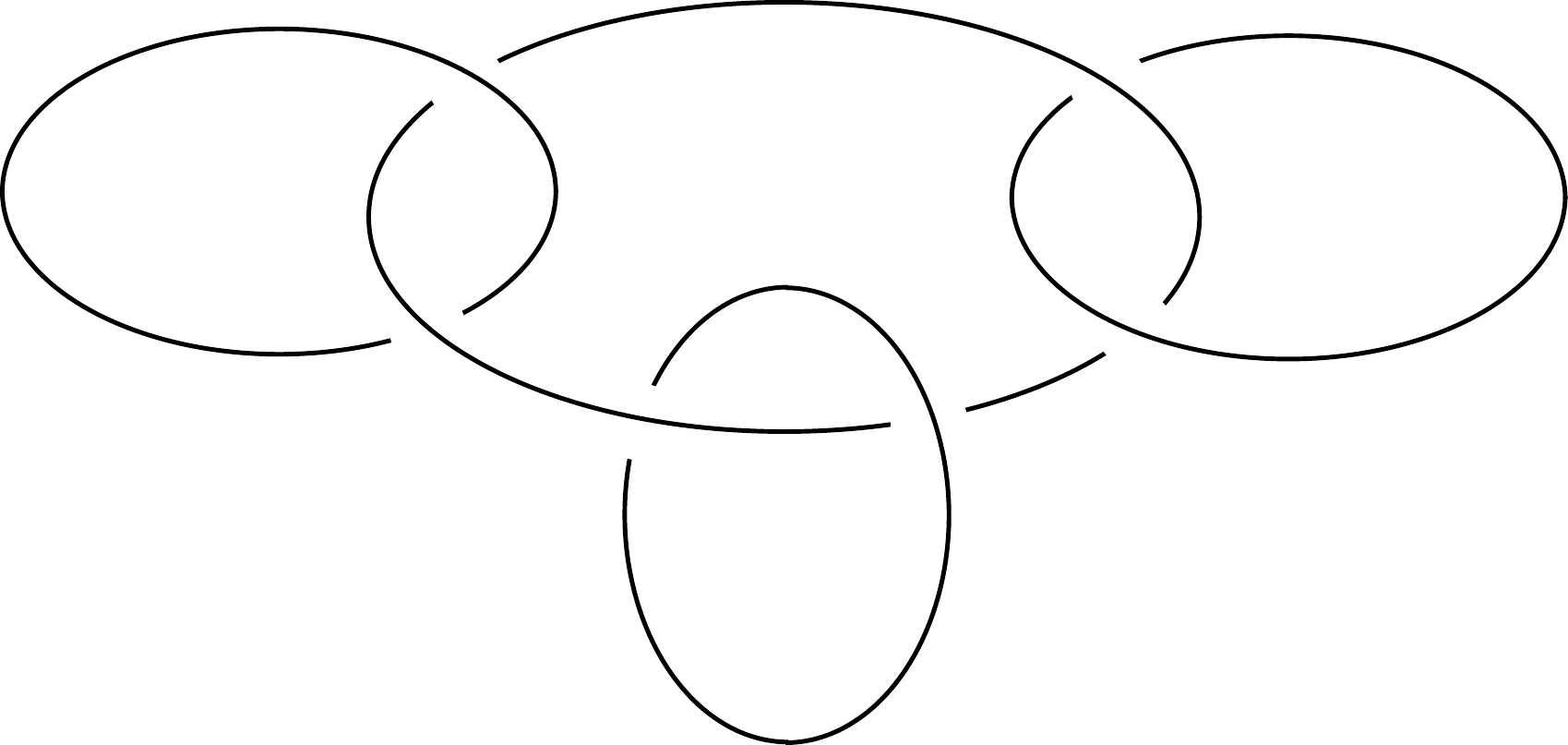}
\put(5,88){$K_3$}
\put(80,90){$K_4$}
\put(155,88){$K_5$}
\put(60,5){$K_6$}
\end{overpic}
\caption{The link $L'$}
\label{fig_tree_in_G0}
\end{figure}

Let $D_i$ ($i=1,\cdots,6$) be the disks given by Proposition \ref{prop_disks_in_minimal_position}.  Recall that we have
\begin{enumerate}
	\item $D_i$ is an embedded disk with $\partial D_i=K_i$ for all $i$,
	\item $D_i$ is transverse to $K_j$ for all $i\neq j$,
	\item $D_i$ is disjoint from $D_j$ if $\lk(K_i,K_j)=0$, and $D_i$ intersects $D_j$ transversely at an arc if $|\lk(D_i,D_j)|=1$.
\end{enumerate} 
After an isotopy, we may assume that the components of $L'\subset L$ is given by Figure \ref{fig_tree_in_G0},  every component of $L'$ is contained in a (flat) plane, and the disk $D_i$ is the disk bounded by $K_i$ in the respective plane for $i=3,\cdots,6$. Since $D_1$ is disjoint from $D_4,D_5$ and intersects each of $D_3$ and $D_6$ at an arc, we have
\begin{equation}
\label{eqn_describe_isotopy_class_of_K1}
	K_1 \text{ can be isotoped in $S^3-L'$ to a position described by Figure \ref{fig_tree_with_K1} }. 
\end{equation}
Similarly, we have
\begin{equation}
\label{eqn_describe_isotopy_class_of_K2}
	K_2 \text{ can be isotoped in $S^3-L'$ to a position  described by Figure \ref{fig_tree_with_K2} }. 
\end{equation}

\begin{figure}
\vspace{\baselineskip}
\begin{overpic}[width=0.5\textwidth]{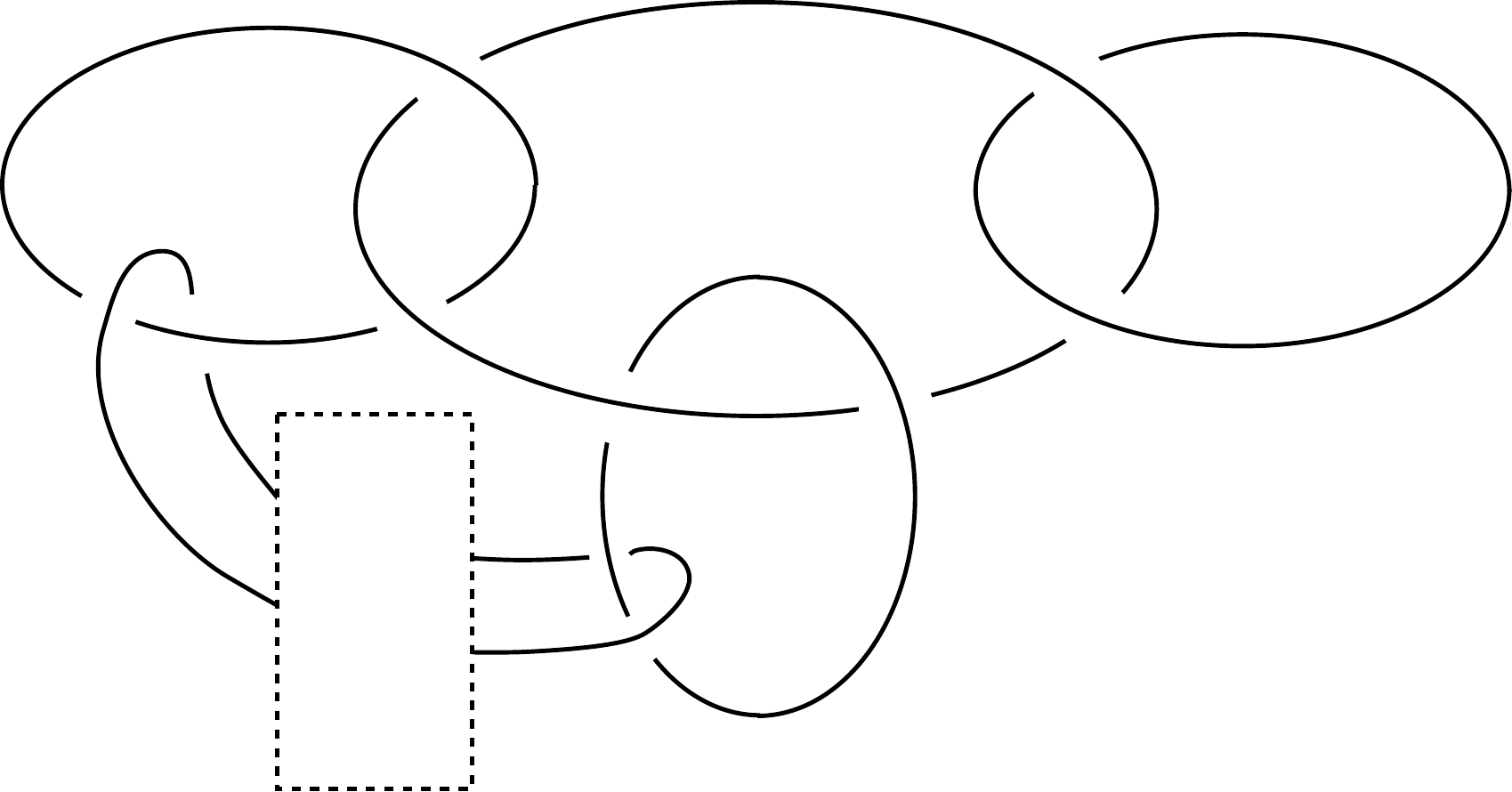}
\put(5,95){$K_3$}
\put(80,100){$K_4$}
\put(155,95){$K_5$}
\put(112,20){$K_6$}
\put(15,18){$K_1$}
\end{overpic}
\caption{The isotopy class of $K_1$ in $S^3-L'$}
\label{fig_tree_with_K1}

\vspace{\baselineskip}

\vspace{\baselineskip}
\begin{overpic}[width=0.5\textwidth]{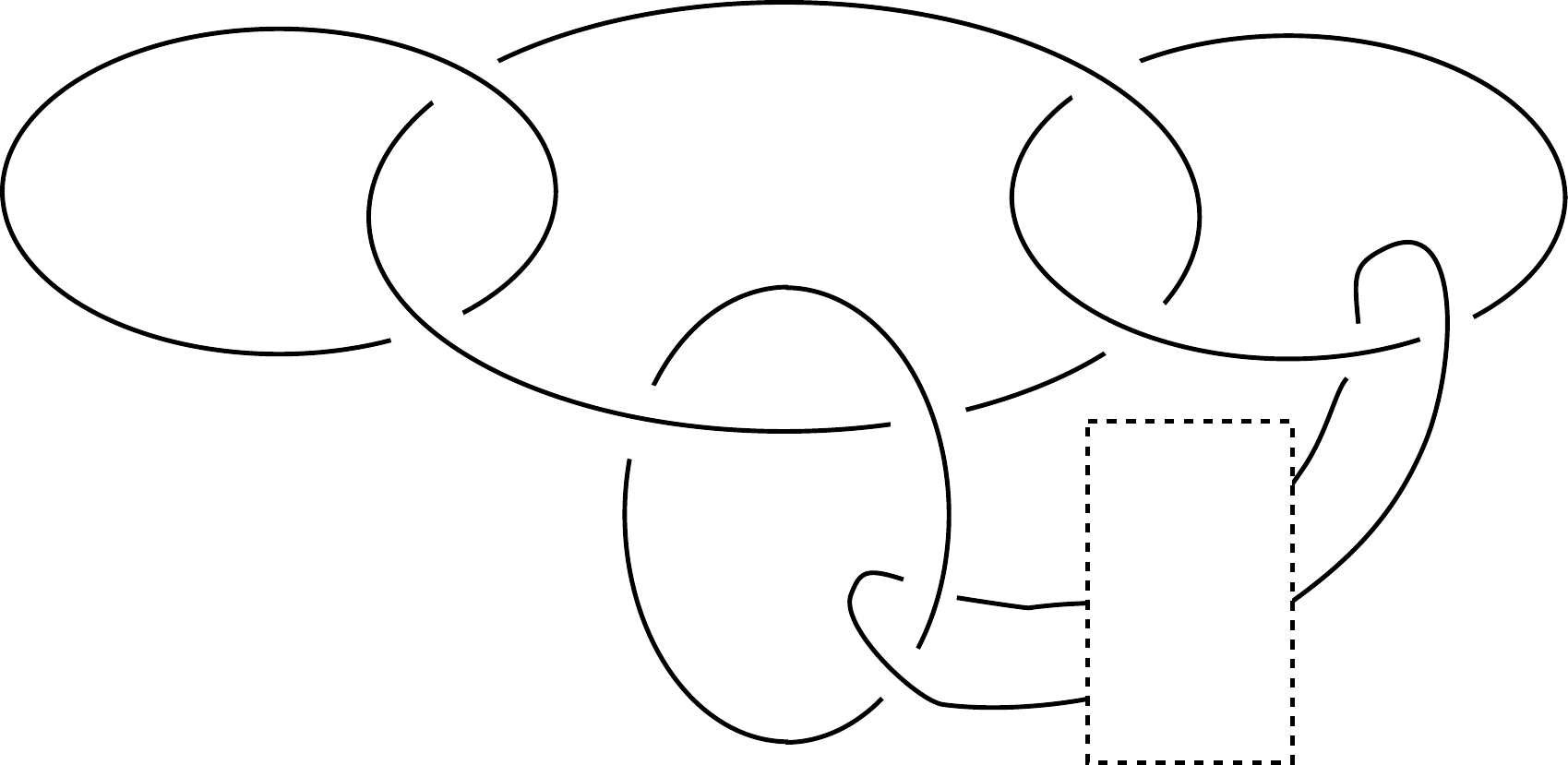}
\put(5,88){$K_3$}
\put(80,92){$K_4$}
\put(155,88){$K_5$}
\put(60,10){$K_6$}
\put(152,10){$K_2$}
\end{overpic}
\caption{The isotopy class of $K_2$ in $S^3-L'$}
\label{fig_tree_with_K2}
\end{figure}

Now consider the value of
$$
	\lk(K_3,K_4)\cdot \lk(K_4,K_6) \cdot \lk(K_6,K_1)\cdot \lk(K_1,K_3).
$$
The value of the product above is $\pm 1$ and it does not depend on the choice of the orientation of $L$. 
By Proposition \ref{prop_sublink_minimal_I} and Proposition \ref{prop_L8n8}, we have
\begin{equation}
\label{eqn_prod_linking_number_K1}
	\lk(K_3,K_4)\cdot \lk(K_4,K_6) \cdot \lk(K_6,K_1)\cdot \lk(K_1,K_3)=1.
\end{equation}
 Similarly, we have
\begin{equation}
\label{eqn_prod_linking_number_K2}
\lk(K_4,K_5)\cdot \lk(K_5,K_2) \cdot \lk(K_2,K_6)\cdot \lk(K_6,K_4)=1.
\end{equation}

Let $S$ be the Seifert surface of $L'$ shown in Figure \ref{fig_G0_seifert}. Fix an orientation for $S$ and take the induced orientation on $L'= \partial S$, and  choose an arbitrary orientation for $K_1$ and $K_2$.   Then by \eqref{eqn_prod_linking_number_K1} and \eqref{eqn_prod_linking_number_K2}, we have
$$
\lk(K_1,K_3) =- \lk(K_1,K_6) , \quad \lk(K_2,K_5) =-\lk(K_2,K_6).
$$
Therefore, the algebraic intersection numbers of $S$ with $K_1$ and $K_2$ are both zero. By Proposition \ref{prop_disjoint_from_Seifert_surface}, the link $L''=K_1\cup K_2$ can be isotoped in $S^3-L'$ to a position that is disjoint from $S$. From now on, we  assume without loss of generality that $L''\cap S=\emptyset$.

\begin{figure}
\includegraphics[width=0.5\textwidth]{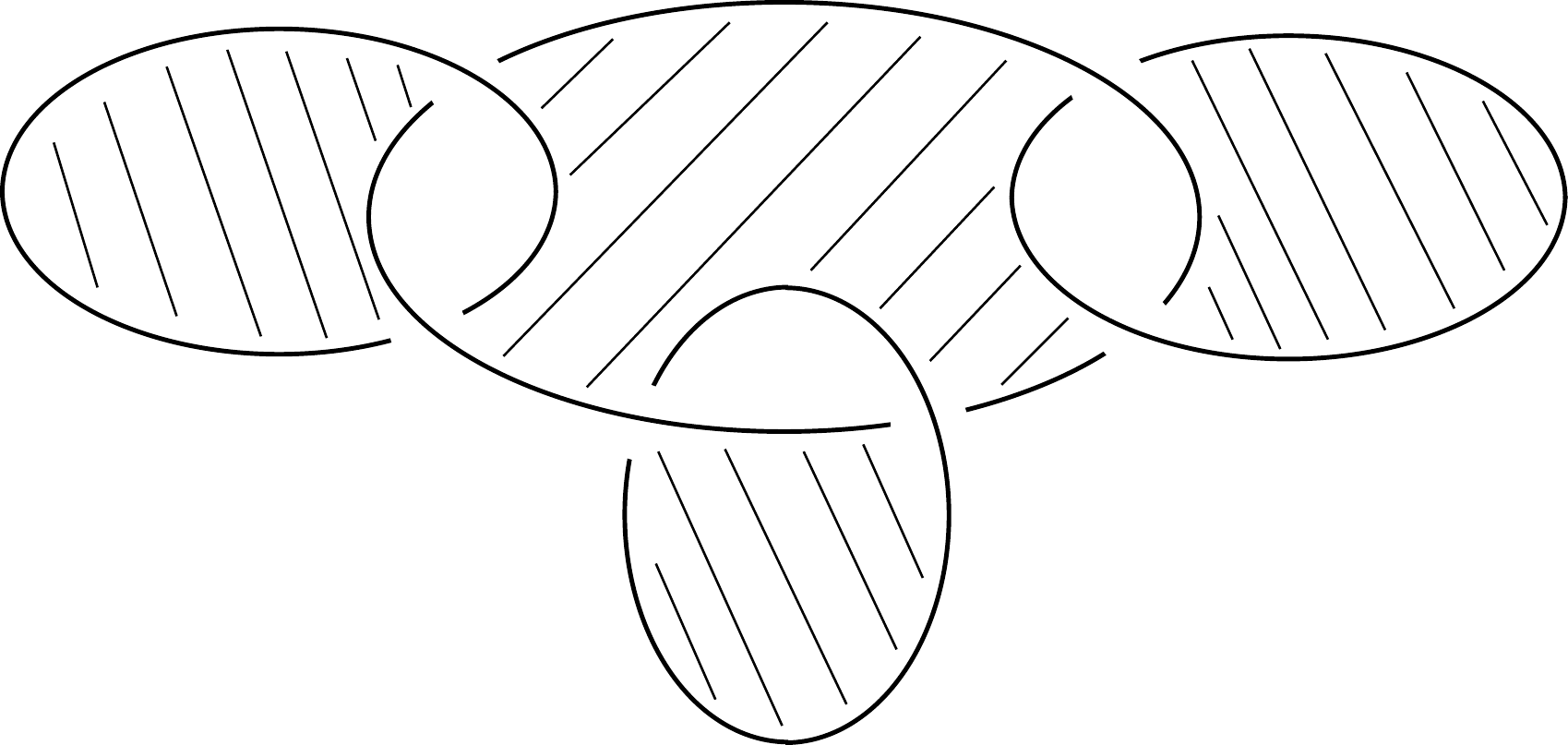}
\caption{The Seifert surface $S$ of $L'$.}
\label{fig_G0_seifert}
\end{figure}

We need the following lemma.
\begin{Lemma}
\label{lem_L(theta)}
Suppose $\theta= \theta_1\cup\cdots \cup \theta_n$ is a disjoint union of properly embedded arcs in $S$. Then there exists a unique link $L(\theta)$ up to isotopy in $S^3-L'$ such that the following properties hold:
\begin{enumerate}
	\item $L(\theta)$ has $n$ components $K(\theta_1),\cdots,K(\theta_n)$,
	\item every component $K(\theta_i)$ bounds an embedded disk $D(\theta_i)$,
	\item $D(\theta_i)$ intersects both $\partial S$ and $S$ transversely,
	\item $D(\theta_i)$ is disjoint from $D(\theta_j)$ when $i\neq j$,
	\item $D(\theta_i) \cap S = \theta_i$.
\end{enumerate}
\end{Lemma}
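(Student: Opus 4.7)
My plan is to prove existence and uniqueness separately, using local models in tubular neighborhoods of the arcs $\theta_i$.

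For existence, I will construct each disk $D(\theta_i)$ inside a pairwise disjoint tubular neighborhood $N_i\subset S^3$ of $\theta_i$. The key point is to extend $\theta_i$ slightly past its endpoints $\partial\theta_i\subset L'$ to an arc $\bar\theta_i$ in $S^3$ with $\bar\theta_i\cap S=\theta_i$ and $\bar\theta_i\cap L'=\partial\theta_i$ lying in the interior of $\bar\theta_i$. Then in a small neighborhood of $\bar\theta_i$, using a product parametrization in which $S$ appears as a rectangular strip with midline $\theta_i$ and $L'$ crosses transversely at $\partial\theta_i$, I take $D(\theta_i)$ to be the transverse rectangle swept by moving $\bar\theta_i$ a small distance in the direction normal to $S$, with $K(\theta_i):=\partial D(\theta_i)$. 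All of the required properties hold: $D(\theta_i)\cap S=\theta_i$, $D(\theta_i)\cap L'=\partial\theta_i$ in the interior of $D(\theta_i)$, $K(\theta_i)\cap L'=\emptyset$, and property (4) follows from the disjointness of the $N_i$'s.

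For uniqueness, let $\{D(\theta_i)\}$ and $\{\tilde D(\theta_i)\}$ be two collections satisfying (1)--(5). I will produce an isotopy in $S^3-L'$ taking one link to the other, handled component by component inside disjoint tubular neighborhoods of the $\theta_i$'s. Each loop $K(\theta_i)$ is an unknot in $S^3$, since it bounds $D(\theta_i)$; and removing small open disk neighborhoods of the two points $\partial\theta_i$ from $D(\theta_i)$ yields an embedded pair of pants in $S^3-L'$ that cobounds $K(\theta_i)$ with two small meridians of $L'$ at the fixed locations $\partial\theta_i$. The same pair-of-pants description applies to $\tilde K(\theta_i)$ with the same two meridians, so the conjugacy class of $K(\theta_i)$ in $\pi_1(S^3-L')$ is determined by $\theta_i$, and $K(\theta_i)$ and $\tilde K(\theta_i)$ are homologous unknots with matching data.

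The main technical obstacle is upgrading this information to an actual ambient isotopy in $S^3-L'$. I plan to do this with an innermost-disk argument applied directly to $D(\theta_i)$ and $\tilde D(\theta_i)$: after a generic perturbation, their intersection is $\theta_i$ together with a disjoint union of arcs and circles in the interiors of the two disks, and these extra components can be removed one at a time by surgery, using the simple-connectivity of each disk and the fact that both disks meet $S\cup L'$ only along $\theta_i\cup\partial\theta_i$. After these surgeries $D(\theta_i)\cap\tilde D(\theta_i)=\theta_i$, and a normal push-off of one disk along the other through a collar neighborhood of their union then produces an embedded annulus in $S^3-L'$ with boundary $K(\theta_i)\cup\tilde K(\theta_i)$, which gives the required isotopy. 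Carrying this out simultaneously inside the disjoint $N_i$'s preserves property (4) and assembles into the desired global isotopy in $S^3-L'$.
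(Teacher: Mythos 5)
Your existence construction is essentially the paper's: both build $D(\theta_i)$ as a small transverse band over $\theta_i$ inside a product neighborhood, and that half is fine. The uniqueness half, however, has genuine gaps. First, your whole scheme rests on the premise that the two disk systems $\{D(\theta_i)\}$ and $\{\tilde D(\theta_i)\}$ live inside disjoint tubular neighborhoods $N_i$ of the arcs. They need not: conditions (2)--(5) only constrain how the disks meet $S$ and each other, and each $D(\theta_i)$ can be an arbitrarily large, knotted-up disk wandering all over $S^3$. You use this false premise twice --- to localize the cut-and-paste, and to conclude that the component-by-component isotopies can be performed simultaneously without colliding with the other components --- so as written the argument does not assemble into an isotopy of the link. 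Second, the endgame of the cut-and-paste is not justified. Arc components of $D(\theta_i)\cap\tilde D(\theta_i)$ with endpoints on the boundary circles (which occur whenever $K(\theta_i)$ pierces $\tilde D(\theta_i)$) cannot be removed by surgery on disk interiors; they force you to isotope the boundary curves, and those isotopies must be shown to avoid $L'$ even though $\tilde D(\theta_i)$ meets $L'$ at $\partial\theta_i$. And once $D(\theta_i)\cap\tilde D(\theta_i)$ is reduced to (an arc through) $\theta_i$, the union of the two disks near $\theta_i$ is four half-sheets meeting along an arc; ``a normal push-off of one disk along the other'' does not visibly produce an embedded annulus in $S^3-L'$ with boundary $K(\theta_i)\cup\tilde K(\theta_i)$. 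This last step is exactly the content of the lemma, so it cannot be waved through.

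The paper's uniqueness argument avoids comparing the two disk systems altogether, and you should note how much condition (5) buys: since $D(\theta_i)\cap S=\theta_i$, the complement in $D(\theta_i)$ of a small disk neighborhood of $\theta_i$ is an annulus disjoint from $S$, hence from $L'=\partial S$. Shrinking $K(\theta_i)$ across this annulus is therefore an isotopy in $S^3-L'$ carrying $K(\theta_i)$ onto the boundary of a small band around $\theta_i$; transversality of $D(\theta_i)$ to $S$ along $\theta_i$ (condition (3)) pins that band down up to isotopy, because the space of lines in the normal plane of $\theta_i$ avoiding the tangent direction of $S$ is contractible. Since the $D(\theta_i)$ are pairwise disjoint by (4), these shrinkings happen simultaneously in disjoint regions, and both links land on the same standard model $L(\theta)$. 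If you want to salvage your approach, the honest fix is to replace your final ``push-off annulus'' step with exactly this shrinking argument --- at which point the innermost-disk machinery becomes unnecessary.
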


\begin{proof}
For the existence of $L(\theta)$, notice that 
for each $i$, the regular neighborhood $U_i$ of $\theta_i$ is diffeomorphic to $(-1,1)\times(-1,1)\times (-2,2)$,
and we may take a diffeomorphism $$\phi_i:U_i\to (-1,1)\times(-1,1)\times (-2,2)$$ such that 
$$\phi_i(\theta_i)= \{0\}\times\{0\}\times [-1,1],$$  
$$\phi_i(S\cap U_i)=\{0\}\times(-1,1)\times[-1,1].$$  
By shrinking the regular neighborhoods, we may also assume that $U_i\cap U_j=\emptyset$ for all $i\neq j$. 
Let $D$ be a disk in $(-1,1)\times\{0\}\times (-2,2)$ that contains $\{0\}\times\{0\}\times [-1,1]$.  One can construct the link $L(\theta)$ by taking $D(\theta_i):=\phi_i^{-1}(D)$ and $K(\theta_i):=\partial D(\theta_i)$.

The uniqueness of $L(\theta)$ follows from the observation that each $K(\theta_i)$ can be isotoped into a small neighborhood of $\theta_i$ through the disk $D(\theta_i)$.
\end{proof}

\begin{Definition}
Suppose $\theta$ is a disjoint union of properly embedded arcs on $S$. We use  $L(\theta)$ to denote the link in $S^3-L'$ given by Lemma \ref{lem_L(theta)}.
\end{Definition}

The following lemma is clear from the definition of $L(\theta)$.
\begin{Lemma}
\label{lem_L(theta)_under_isotopy}
	Suppose $\theta$ is a disjoint union of properly embedded arcs on $S$. Let $f:S\to S$ be a diffeomorphism. Suppose $f$ is isotopic to $\id_S$ via an isotopy on $S$ that does not necessarily fix $\partial S$. Then $L(\theta)$ is isotopic to $L(f(\theta))$ in $S^3-L'$. \qed
\end{Lemma}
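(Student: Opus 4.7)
The plan is to upgrade the isotopy $\{f_{t}\}_{t\in[0,1]}$ on $S$ to an ambient isotopy of $S^{3}-L'$ carrying $\theta$ to $f(\theta)$, and then invoke the uniqueness part of Lemma \ref{lem_L(theta)}.

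First I would extend the given smooth family $\{f_{t}\}_{t\in[0,1]}$ of diffeomorphisms of $S$ (with $f_{0}=\id_{S}$ and $f_{1}=f$) to an ambient isotopy of $S^{3}$ that preserves both $L'$ and $S$ setwise. The boundary restrictions $f_{t}|_{\partial S}$ form an isotopy of $L'=\partial S$; using a tubular neighborhood of $L'$ in $S^{3}$, this boundary isotopy extends to an ambient isotopy of $S^{3}$ supported in that tubular neighborhood. Modifying $f_{t}$ by this extension, one may assume the new isotopy of $S$ fixes $\partial S$ pointwise, and then the standard isotopy extension theorem (applied in a bicollar of $S$ and extended by the identity) produces a smooth ambient isotopy $\Phi_{t}\colon S^{3}\to S^{3}$ with $\Phi_{0}=\id$, $\Phi_{t}(L')=L'$, $\Phi_{t}(S)=S$, and $\Phi_{1}|_{S}=f$. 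In particular, $\Phi_{t}$ restricts to a smooth ambient isotopy of $S^{3}-L'$.

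Next, let $K(\theta_{i})$ and $D(\theta_{i})$ be the circles and disks produced by Lemma \ref{lem_L(theta)} for the arc system $\theta$. Applying the ambient isotopy, the disks $\Phi_{1}(D(\theta_{i}))$ bound the circles $\Phi_{1}(K(\theta_{i}))$ and satisfy properties (2)--(5) of Lemma \ref{lem_L(theta)} with $\theta_{i}$ replaced by $f(\theta_{i})$, since $\Phi_{1}$ preserves $S$ setwise and carries $\theta_{i}$ to $f(\theta_{i})$. By the uniqueness clause of Lemma \ref{lem_L(theta)}, the link $\Phi_{1}(L(\theta))$ is isotopic to $L(f(\theta))$ in $S^{3}-L'$. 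Combining with the ambient isotopy $\Phi_{t}$ itself, which carries $L(\theta)$ to $\Phi_{1}(L(\theta))$ inside $S^{3}-L'$, yields the desired isotopy between $L(\theta)$ and $L(f(\theta))$.

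The main technical point I expect to require care is the combined extension of $f_{t}$ to an ambient isotopy of $S^{3}$ preserving both $L'$ and $S$ setwise; once that is arranged, the conclusion follows immediately from the uniqueness statement, so no gauge-theoretic or Floer-homological input is needed. Everything else is a routine application of the isotopy extension theorem.
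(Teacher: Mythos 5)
The paper offers no proof of this lemma at all --- it is stated with a \qed and the preceding sentence declares it ``clear from the definition of $L(\theta)$,'' the intended intuition presumably being that each $K(\theta_i)$ shrinks into a regular neighborhood of $\theta_i$, so $L(\theta)$ only depends on $\theta$ up to ambient isotopy of $S$. Your argument supplies an actual proof along exactly these lines and is essentially correct: once you have an ambient isotopy $\Phi_t$ of $S^3$ preserving $S$ and $L'=\partial S$ setwise with $\Phi_1|_S=f$, the disks $\Phi_1(D(\theta_i))$ verify conditions (2)--(5) of Lemma \ref{lem_L(theta)} for $f(\theta)$, and the uniqueness clause plus the isotopy $t\mapsto\Phi_t(L(\theta))$ (which stays in $S^3-L'$ because $\Phi_t$ preserves $L'$) finishes the job. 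The one step that does not quite work as written is the intermediate reduction: after extending the boundary isotopy $f_t|_{\partial S}$ to an ambient isotopy $\Psi_t$ supported near $L'$, the composition $\Psi_t^{-1}\circ f_t$ need not map $S$ into $S$, since $\Psi_t$ has no reason to preserve $S$; so you cannot directly ``assume the new isotopy of $S$ fixes $\partial S$ pointwise.'' This detour is unnecessary anyway: the isotopy extension theorem applies directly to the isotopy of embeddings $f_t\colon S\hookrightarrow S^3$ of the compact surface-with-boundary $S$, producing the desired $\Phi_t$ in one step (and $\Phi_t(L')=f_t(\partial S)=L'$ automatically). With that adjustment your proof is complete.
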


Recall that $D_i$ is the embedded disk bounded by $K_i$ given by Proposition \ref{prop_disks_in_minimal_position}.  After a generic perturbation, the intersection of $D_1$ and $S$ is the disjoint union of an arc and finitely many circles. We denote the arc component of $D_1\cap S$ by $\theta_1$.  Similarly, take a generic perturbation on $D_2$ and let $\theta_2$ be the arc component of $D_2\cap S$. By shrinking $K_1$ to a small neighborhood of $\theta_1$ via $D_1$, and shrinking $K_2$ to a small neighborhood of $\theta_2$ via $D_2$, we see that $L''$ is isotopic to $L(\theta_1\cup\theta_2)$ in $S^3-L'$. 

\subsection{The isotopy class of $\theta_1\cup\theta_2$}
\label{subsec_G0_isotopy_theta}

We study the isotopy class of $\theta_1\cup\theta_2$ in $S$ (relative to $\partial S$) using \eqref{eqn_describe_isotopy_class_of_K1} and \eqref{eqn_describe_isotopy_class_of_K2}.
We will need the following two lemmas.

\begin{Lemma}
\label{lem_free_group_3}
Let $F_3$ be the free group generated by $a,b,c$. Suppose  $w\in F_3$.  If there exists $\epsilon_i\in\{1,-1\}$ such that
$a^{\epsilon_1}wb^{\epsilon_2} w^{-1}$ is conjugate to $a^{\epsilon_3}b^{\epsilon_4}$ in $F_3$, then $w$ is contained in the subgroup generated by $a,b$. 
\end{Lemma}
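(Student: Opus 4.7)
The plan is to work with the free product decomposition $F_3 = \langle a,b\rangle * \langle c\rangle$ and to exploit the fact that, in a free product, the cyclically reduced syllable length is a conjugacy invariant.

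First I would reduce the number of cases by abelianizing. Conjugate elements have equal images in $F_3/[F_3,F_3] \cong \bZ^3$, so the images of $a^{\epsilon_1}wb^{\epsilon_2}w^{-1}$ and $a^{\epsilon_3}b^{\epsilon_4}$ force $\epsilon_1 = \epsilon_3$ and $\epsilon_2 = \epsilon_4$. Then, applying the automorphism of $F_3$ that inverts $a$ when $\epsilon_1 = -1$ and inverts $b$ when $\epsilon_2 = -1$ (each of which preserves $\langle a,b\rangle$), I would further reduce to the symmetric case in which all $\epsilon_i = 1$. The statement to prove therefore becomes: if $awbw^{-1}$ is conjugate in $F_3$ to $ab$, then $w \in \langle a,b\rangle$.

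Next I would write $w$ in its free-product normal form as an alternating product of nontrivial syllables drawn from $\langle a,b\rangle$ and $\langle c\rangle$. If no syllable is a power of $c$, then $w \in \langle a,b\rangle$ and we are done. Otherwise, with $k\ge 1$ denoting the number of $c$-syllables in $w$, there are four cases according to whether $w$ begins and ends with an $\langle a,b\rangle$-syllable or with a $\langle c\rangle$-syllable. In each case I would write out the product $a\,w\,b\,w^{-1}$, merging adjacent syllables that happen to lie in the same factor at the three interfaces (between $a$ and the first syllable of $w$, between the last syllable of $w$, $b$, and the first syllable of $w^{-1}$, and symmetrically at the end), and then reduce cyclically. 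A direct bookkeeping shows that the resulting cyclic word has at least $4$ syllables in every case. The only slightly subtle point is the case in which $w$ both begins and ends with $\langle a,b\rangle$-syllables $u_0, u_k$: the cyclic reduction collapses the outermost two syllables into the single syllable $u_0^{-1}a\,u_0$, and one must observe that this is nontrivial because $a \neq 1$.

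Since $ab$ is a single nontrivial $\langle a,b\rangle$-syllable, its cyclically reduced syllable length is $1$, which contradicts the lower bound of $4$ established above as soon as $w$ contains any $c$-syllable. Hence $w \in \langle a,b\rangle$, as required. I do not foresee a genuine obstacle: the argument is a routine normal-form computation once the case distinction based on the outermost syllables of $w$ is set up correctly.
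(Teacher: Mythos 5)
Your proposal is correct, and it takes a genuinely different route from the paper. The paper's proof stays inside the free group on $a,b,c$ with ordinary reduced words: after the same abelianization step it converts the conjugacy hypothesis into an identity $a^{-1}wbw^{-1}=ra^{-1}br^{-1}$, rearranges this into a commutator equation $axa^{-1}x^{-1}=byb^{-1}y^{-1}$ with $x=r^{-1}$, $y=r^{-1}w$, and then solves that equation by a word-length case analysis supported by two centralizer computations (the centralizer of $a$ is $\langle a\rangle$, and the centralizer of $a^{\pm1}b^{\pm1}$ is cyclic). That argument actually pins down $x$ and $y$ explicitly as powers of $b^{-1}a$ or $a^{-1}b$, so it yields slightly more information than the statement requires. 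Your argument instead passes to the free product decomposition $F_3=\langle a,b\rangle * \langle c\rangle$ and uses the standard fact that cyclically reduced syllable length is a conjugacy invariant, so that an element conjugate to $a^{\epsilon_3}b^{\epsilon_4}$ (a nontrivial element of the factor $\langle a,b\rangle$, hence of cyclic syllable length $1$) cannot have cyclic syllable length $\geq 2$. I checked your bookkeeping: if $w$ has $k\ge 1$ syllables from $\langle c\rangle$, then $awbw^{-1}$ cyclically reduces to syllable length exactly $4k\ge 4$ in every one of your four cases, the only cancellations being the ones you flag ($au_0$ possibly trivial at one end, forcing the word to end in $u_0^{-1}=a\neq 1$; and the outer collapse to $u_0^{-1}au_0\neq 1$), while the middle syllable $u_kbu_k^{-1}$ is a conjugate of $b$ and hence nontrivial. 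Your route is cleaner and more conceptual, requires no reduction of the signs at all (the count works for any $\epsilon_i$), and generalizes immediately to the statement that $a^{\epsilon_1}wb^{\epsilon_2}w^{-1}$ conjugate into the factor $\langle a,b\rangle$ forces $w\in\langle a,b\rangle$; the trade-off is that it invokes the normal-form and conjugacy theory of free products, whereas the paper's proof is self-contained at the level of reduced words.
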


The proof of Lemma \ref{lem_free_group_3} will be given in Section \ref{subsec_G0_lemma_free_group}.

\begin{Lemma}
\label{lem_disjoint_homotopy_implies_isotopy}
Let $M$ be a surface with boundary. Let $\gamma_1$ and $\gamma_2$ be two properly embedded arcs on $M$ such that $\partial\gamma_1\cap \partial \gamma_2=\emptyset$. Then $\gamma_1$ can be isotoped (relative to $\partial M$) to a position disjoint from $\gamma_2$ if and only if it can be homotoped (relative to $\partial M$) to a position disjoint from $\gamma_2$. 
\end{Lemma}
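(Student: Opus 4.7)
The ``only if'' direction is immediate, since an ambient isotopy rel $\partial M$ is in particular a homotopy rel $\partial M$. For the ``if'' direction, suppose there exists a properly embedded arc $\gamma_1'$ in $M$ that is disjoint from $\gamma_2$ and homotopic to $\gamma_1$ rel $\partial M$. (The hypothesis $\partial\gamma_1\cap\partial\gamma_2=\emptyset$ ensures that asking $\gamma_1'\cap\gamma_2=\emptyset$ is a sensible condition at the endpoints.) The entire task reduces to showing that $\gamma_1$ and $\gamma_1'$, which are \emph{a priori} just homotopic, are in fact ambient isotopic rel $\partial M$; once this is established, the ambient isotopy at time $1$ is the desired isotopy of $\gamma_1$ to a position disjoint from $\gamma_2$.

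This reduction is handled by the classical theorem of Feustel \cite{feustel1966homotopic} (cf.\ also \cite[Proposition 9.1]{XZ:forest}), which asserts that two properly embedded arcs in a compact surface with boundary that are homotopic rel $\partial M$ are related by an ambient isotopy of the surface that restricts to the identity on $\partial M$. Applying this theorem to $\gamma_1$ and $\gamma_1'$, one obtains an ambient isotopy $\{h_t\}_{t\in[0,1]}$ of $M$ with $h_0=\id_M$, $h_t|_{\partial M}=\id_{\partial M}$ for every $t$, and $h_1(\gamma_1)=\gamma_1'$.

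The family $\gamma_1^t:=h_t(\gamma_1)$ is then an isotopy of $\gamma_1$ rel $\partial M$ through properly embedded arcs, ending at $\gamma_1'$, which is disjoint from $\gamma_2$. This yields exactly the required isotopy.

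Since the argument is a direct invocation of a classical result that the paper has already cited in Section~\ref{subsec_isotopy_L_cycle}, I do not expect any genuine obstacle. The only point worth flagging is that Feustel's theorem does require the two arcs in question to be properly embedded, which is why the hypothesis that $\gamma_1'$ comes from a ``position'' (i.e.\ a proper embedding) disjoint from $\gamma_2$, rather than from an arbitrary continuous map whose image avoids $\gamma_2$, is used essentially.
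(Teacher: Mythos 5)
Your ``only if'' direction and the overall structure are fine, but your ``if'' direction proves a weaker statement than the one the paper needs, and the weakening hides exactly the nontrivial content. You interpret ``homotoped to a position disjoint from $\gamma_2$'' as meaning that there exists a \emph{properly embedded} arc $\gamma_1'$ homotopic to $\gamma_1$ rel $\partial M$ with $\gamma_1'\cap\gamma_2=\emptyset$; granting that reading, the appeal to Feustel's theorem \cite{feustel1966homotopic} is legitimate and gives a valid argument. However, in the paper's application (Section \ref{subsec_G0_isotopy_theta}) the disjoint representative is produced by a $\pi_1$ computation: $\theta_1$ is homotoped rel endpoints to a concatenation $v_1\circ u\circ v_2$, where $u$ is a loop representing an element of the subgroup generated by $\alpha,\beta$ and hence can be pushed off $\delta$ --- but such a path is merely continuous (generically an immersion with self-intersections), not embedded. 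So the hypothesis you flag as being ``used essentially'' is precisely the one that is not available, and your argument has no mechanism for upgrading a non-embedded homotopic representative disjoint from $\gamma_2$ to an embedded one. That upgrade is the actual content of the lemma.

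The paper supplies the missing step with the bigon criterion \cite[Section 1.2.4--1.2.7]{farb2011primer}: isotope $\gamma_1$ rel $\partial M$ so that it is transverse to $\gamma_2$ and forms no bigon with it; such a configuration realizes the minimal geometric intersection number over the entire homotopy class of $\gamma_1$ rel $\partial M$, including non-embedded representatives, so the existence of \emph{any} homotopic representative disjoint from $\gamma_2$ forces the no-bigon representative to be disjoint as well. If you wanted to keep your Feustel-based framing, you would first have to show that the homotopy class of $\gamma_1$ contains an embedded arc disjoint from $\gamma_2$ whenever it contains a continuous one --- and the natural proof of that is again the bigon criterion (or geodesic representatives), at which point Feustel's theorem is no longer needed.
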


\begin{proof}
The ``only if'' part is trivial. The ``if'' part is a corollary of the bigon criterion  (see, for example, \cite[Section 1.2.4 - 1.2.7]{farb2011primer}) as follows. Isotope $\gamma_1$ relative to $\partial M$ such that $\gamma_1$ intersects $\gamma_2$ transversely and $\gamma_1$ does not form a bigon with $\gamma_2$. By the bigon criterion,  $\gamma_1$ and $\gamma_2$ are in minimal position (i.e. they realize the minimal geometric intersection number up to \emph{homotopy} relative to $\partial M$). Therefore, $\gamma_1$ is disjoint from $\gamma_2$ after the isotopy.
\end{proof}

\begin{figure}
\vspace{\baselineskip}
\begin{overpic}[width=0.5\textwidth]{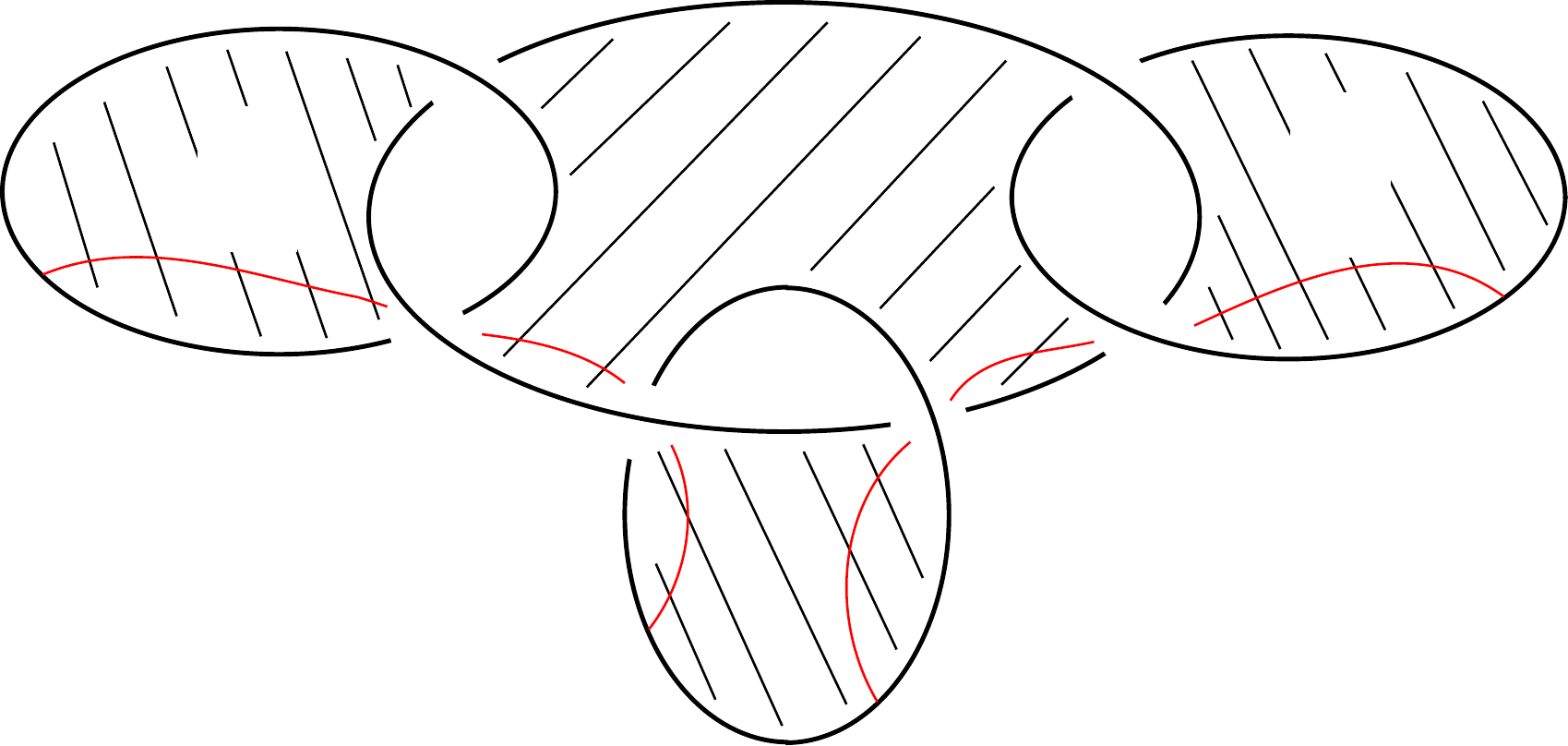}	
\put(26,61){$\hat\theta_1$}
\put(150,61){$\hat\theta_2$}
\put(5,85){$K_3$}
\put(80,90){$K_4$}
\put(150,88){$K_5$}
\put(112,10){$K_6$}
\end{overpic}
\caption{The arcs $\hat\theta_1$ and $\hat\theta_2$ on $S$.}
\label{fig_S_standard_arcs}
\end{figure}

By definition, $\theta_1$ is an arc on $S$ connecting $K_3$ and $K_6$, and $\theta_2$ is an arc on $S$ connecting $K_5$ and $K_6$.
Let $\hat \theta_1$ and $\hat \theta_2$ be the two arcs on $S$ given by Figure \ref{fig_S_standard_arcs}. By Lemma \ref{lem_L(theta)_under_isotopy}, we may assume without loss of generality that $\partial \theta_i = \partial \hat \theta_i$ for $i=1,2$.

\begin{figure}
\vspace{\baselineskip}
\begin{overpic}[width=0.8\textwidth]{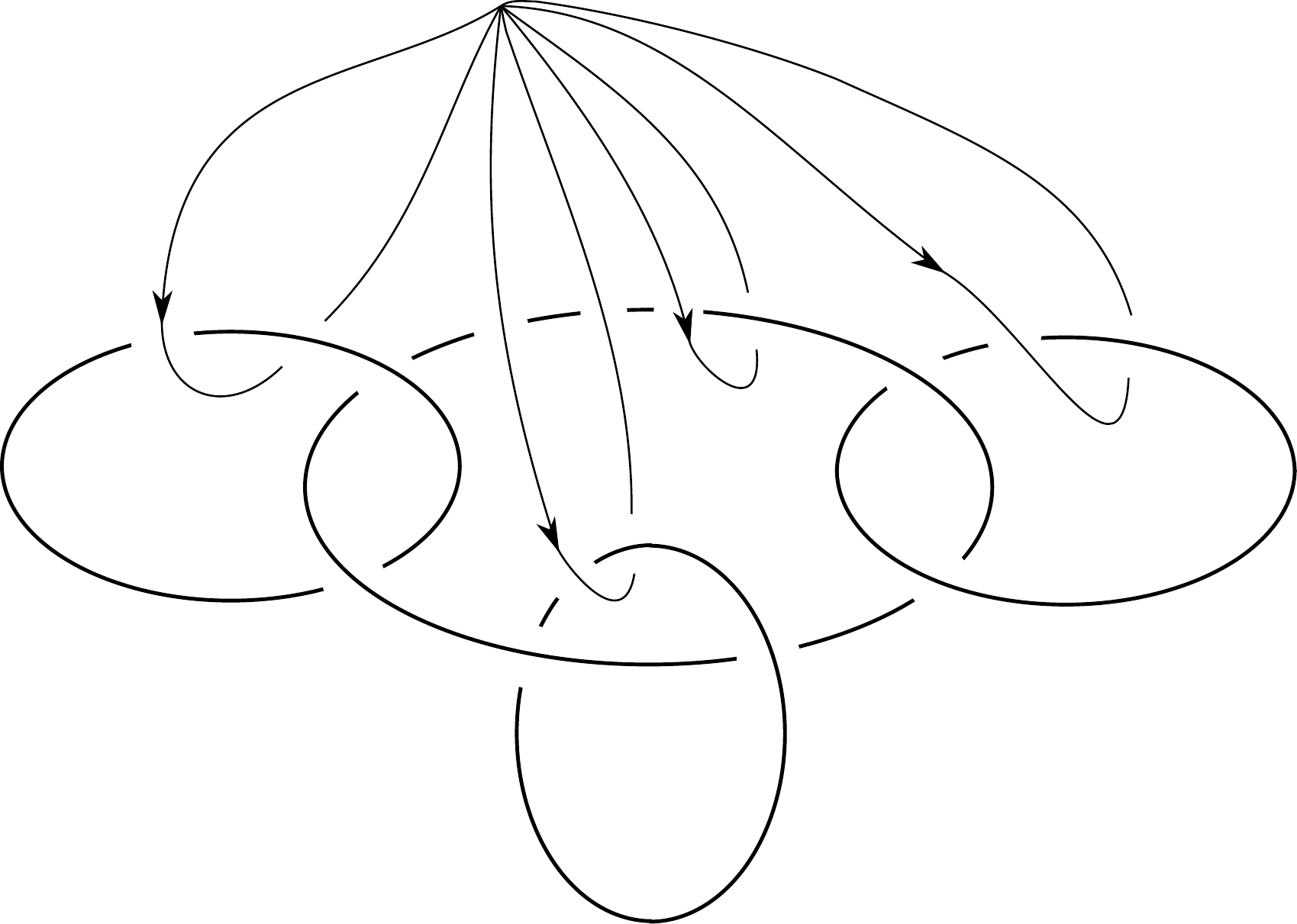}
\put(100,210){$p$}
\put(30,150){\Large $a$}
\put(122,100){\Large $b$}
\put(250,150){\Large $c$}
\put(165,150){\Large $d$}
\end{overpic}
\caption{Generators of $\pi_1(S^3-L')$.}
\label{fig_pi_1}
\end{figure}

Let $p\in S^3-L'$ and define $a,b,c,d\in \pi_1(S^3-L',p)$ as in  Figure \ref{fig_pi_1}.  By the Wirtinger presentation, we have
$$
\pi_1(S^3-L', p) = \langle a,b,c,d | [a,d]=[b,d]=[c,d]=1\rangle.
$$

\begin{figure}
\begin{overpic}[width=0.5\textwidth]{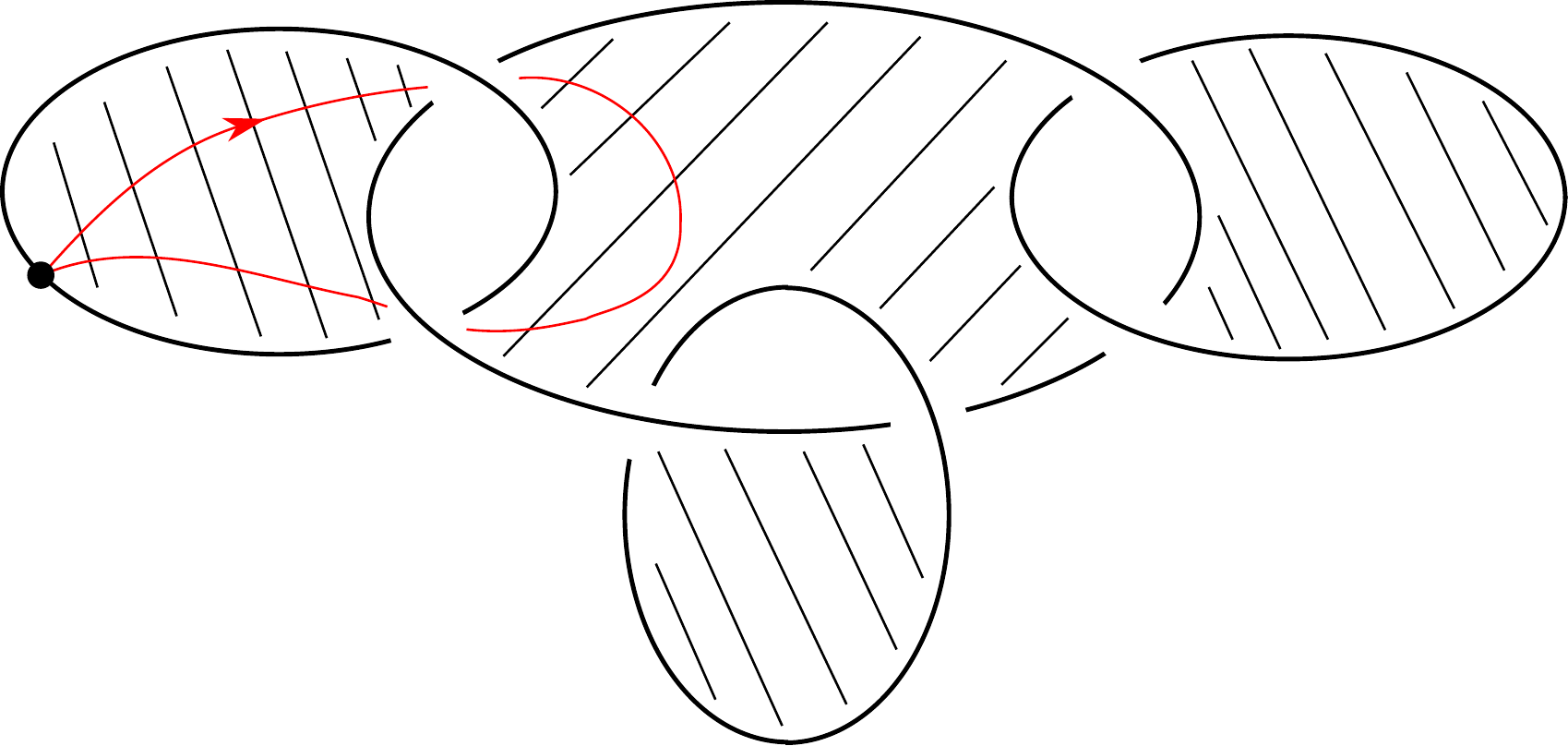}
\put(0,45){$q$}
\end{overpic}
\caption{The loop $\alpha$ on $S$.}
\label{fig_alpha}
\vspace{\baselineskip}

\begin{overpic}[width=0.5\textwidth]{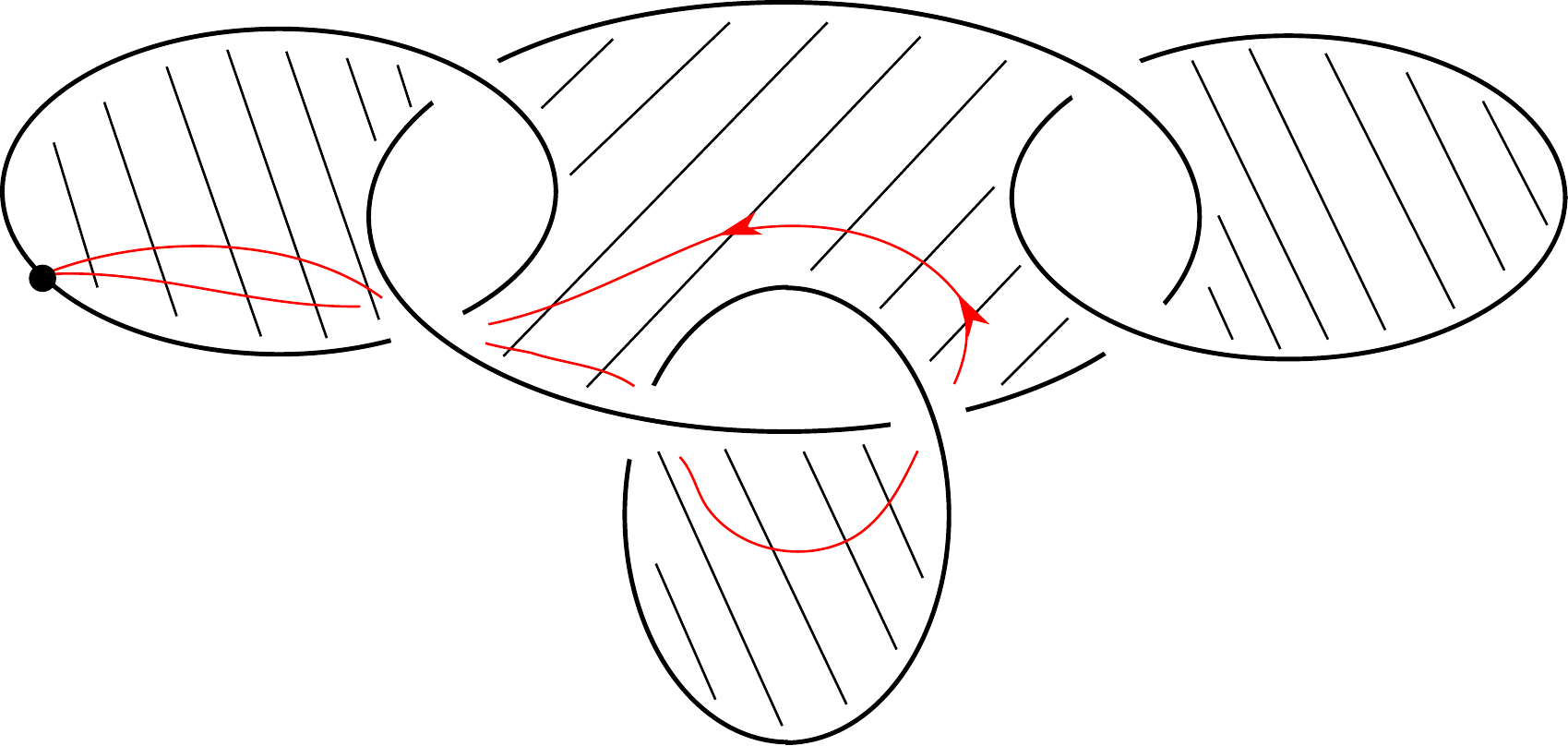}
\put(0,45){$q$}
\end{overpic}
\caption{The loop $\beta$ on $S$.}
\label{fig_beta}
\vspace{\baselineskip}

\begin{overpic}[width=0.5\textwidth]{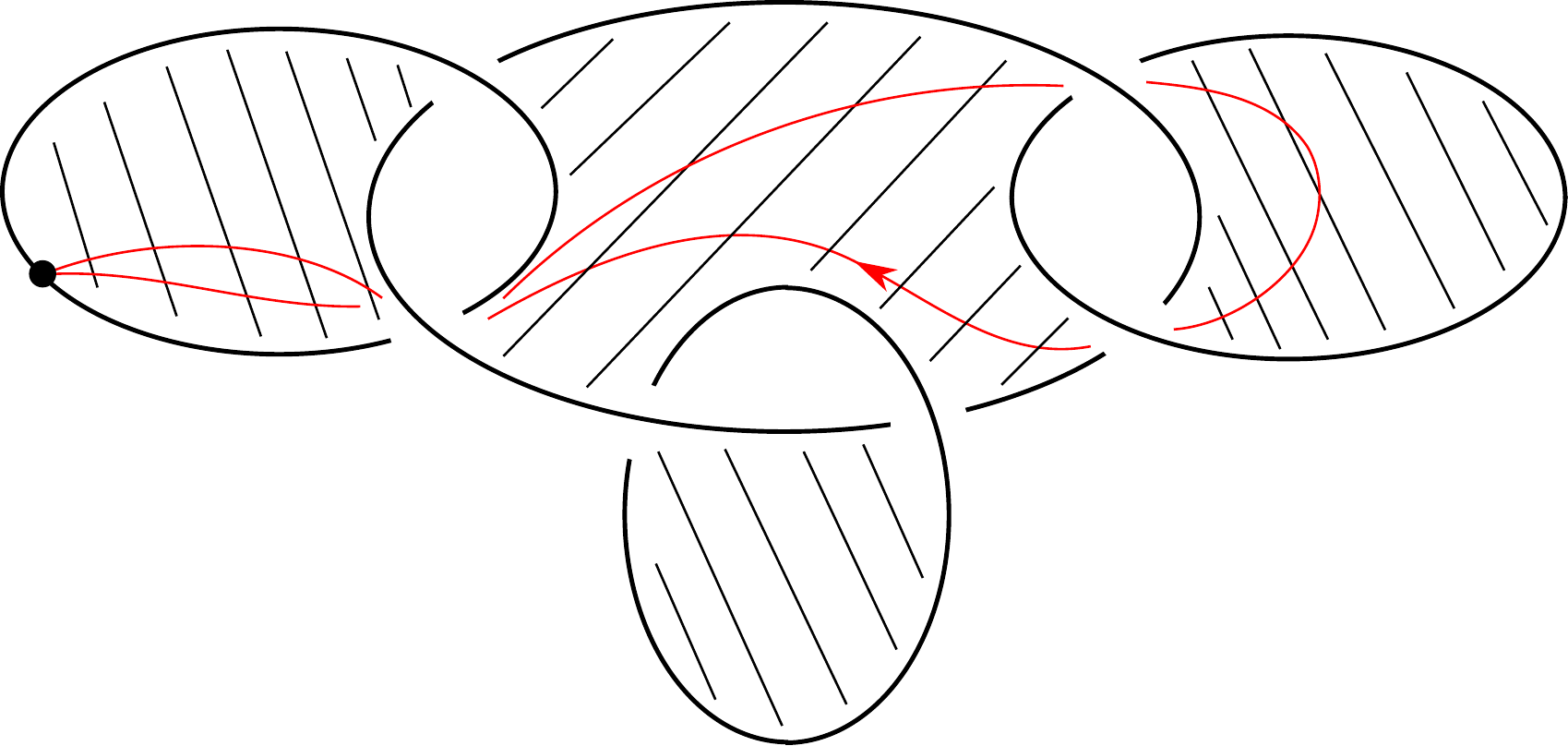}
\put(0,45){$q$}
\end{overpic}
\caption{The loop $\gamma$ on $S$.}
\label{fig_gamma}
\end{figure}

Let $q:= \partial \hat\theta_1 \cap K_3$. Define $\alpha,\beta,\gamma\in\pi_1(S,q)$ by Figures \ref{fig_alpha}, \ref{fig_beta}, \ref{fig_gamma} respectively. Then $\pi_1(S,q)$ is a free group generated by $\alpha,\beta,\gamma$. 

Let $q'\in S$ be a point in the interior of $S$ close to $q$. We also require that $q'$ is a point on the arc $\hat\theta_1$. The arc from $p$ to $q'$ as shown in Figure \ref{fig_arc_q_p} induces an isomorphism
\begin{equation}
\label{eqn_fund_gp_psi_1_q'_p}
	\psi_1 : \pi_1(S^3-L',p) \to \pi_1(S^3-L',q').
\end{equation}
Similarly, a short arc on $S$ from $q$ to $q'$ induces an isomorphism
\begin{equation}
\label{eqn_fund_gp_psi_2_q'_q}
	\psi_2 : \pi_1(S,q) \to \pi_1(S,q').
\end{equation}

Define a map $\phi$ from $\pi_1(S,q)$ to $\pi_1(S^3-L',p)$ by
$$
\phi := \psi_1^{-1} \circ \psi_2,
$$
then we have
$$\phi(\alpha) = a\cdot d, \quad \phi(\beta) = b\cdot d, \quad \phi(\gamma) = c\cdot d.$$

\begin{figure}
\begin{overpic}[width=0.5\textwidth]{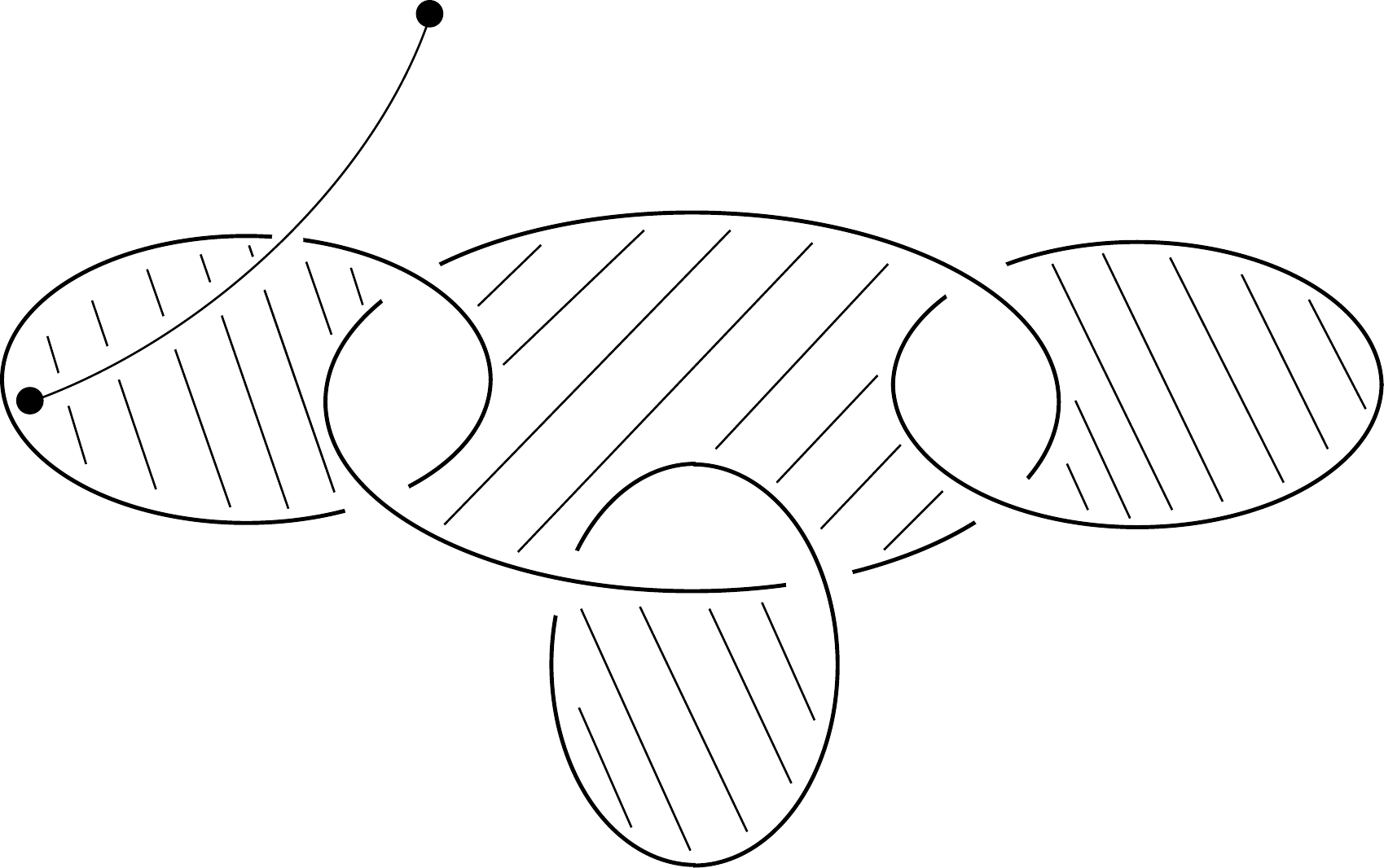}
\put(-10,50){$q'$}
\put(45,110){$p$}
\end{overpic}
\caption{The arc from $p$ to $q'\in S$.}
\label{fig_arc_q_p}
\end{figure}

Let $$\eta\in \pi_1(S,q)$$ be the element represented by $\theta_1\circ \hat{\theta}_1^{-1}$. Let $u$ be a loop in the interior of $S$ based at $q'$ representing $\psi_2(\eta)\in \pi_1(S,q')$, let $v_1$ be the arc from $q$ to $q'$ along $\hat \theta_1$, let $v_2$ be the arc from $q'$ to the other endpoint of $\hat\theta_1$ along $\hat \theta_1$. Then $\theta_1$ is homotopic to $v_1\circ u \circ v_2$ on $S$ (relative to $\partial S$).

The homotopy class of $K_1$ defines a conjugacy class in $\pi_1(S^3-L',p)$.
By the previous argument, the conjugacy class of $K_1$, which is isotopic to $L(\theta_1)$,  is given by
$a^{\epsilon_1} \phi(\eta) b^{\epsilon_2} \phi(\eta)^{-1}$ for some $\epsilon_i\in \{1,-1\}$.

On the other hand, by \eqref{eqn_describe_isotopy_class_of_K1}, the conjugacy class of $K_1$ in $\pi_1(S^3-L',p)$ is given by $a^{\epsilon_3} b^{\epsilon_4}$ for some $\epsilon_i\in \{1,-1\}$. Therefore, $
a^{\epsilon_1} \phi(\eta) b^{\epsilon_2} \phi(\eta)^{-1}
$ is conjugate to $a^{\epsilon_3} b^{\epsilon_4}$ in $\pi_1(S^3-L',p)$.

Let $F_3$ be the free group generated by $a,b,c$. Consider the homomorphism
$$
\psi: \pi_1(S^3-L') \to F_3
$$
defined by 
$$\psi(a)=a,\quad, \psi(b)=b,\quad \psi(c)=c, \quad \psi(d) =1.$$
Then $a^{\epsilon_1} \psi(\phi(\eta)) b^{\epsilon_2} \psi(\phi(\eta))^{-1}$ is conjugate to $a^{\epsilon_3} b^{\epsilon_4}$ in $F_3$. By Lemma \ref{lem_free_group_3}, $\psi(\phi(\eta))$ is contained in the subgroup of $F_3$ generated by $a,b$, so $\eta$ is contained in the subgroup of $\pi_1(S,q)$ generated by $\alpha, \beta$.  

Let $\delta$ be the arc on $S$ as shown in Figure \ref{fig_arc_disjoint_from_theta1}.  Since $\eta$ is contained in the subgroup of $\pi_1(S,q)$ generated by $\alpha, \beta$, the arc $\theta_1$ can be \emph{homotoped} on $S$ (relative to $\partial S$) so that it is disjoint from $\delta$. By Lemma \ref{lem_disjoint_homotopy_implies_isotopy}, the arc $\theta_1$ can be \emph{isotoped} on $S$ (relative to $\partial S$) so that it is disjoint from $\delta$.  

\begin{figure}
\includegraphics[width=0.5\textwidth]{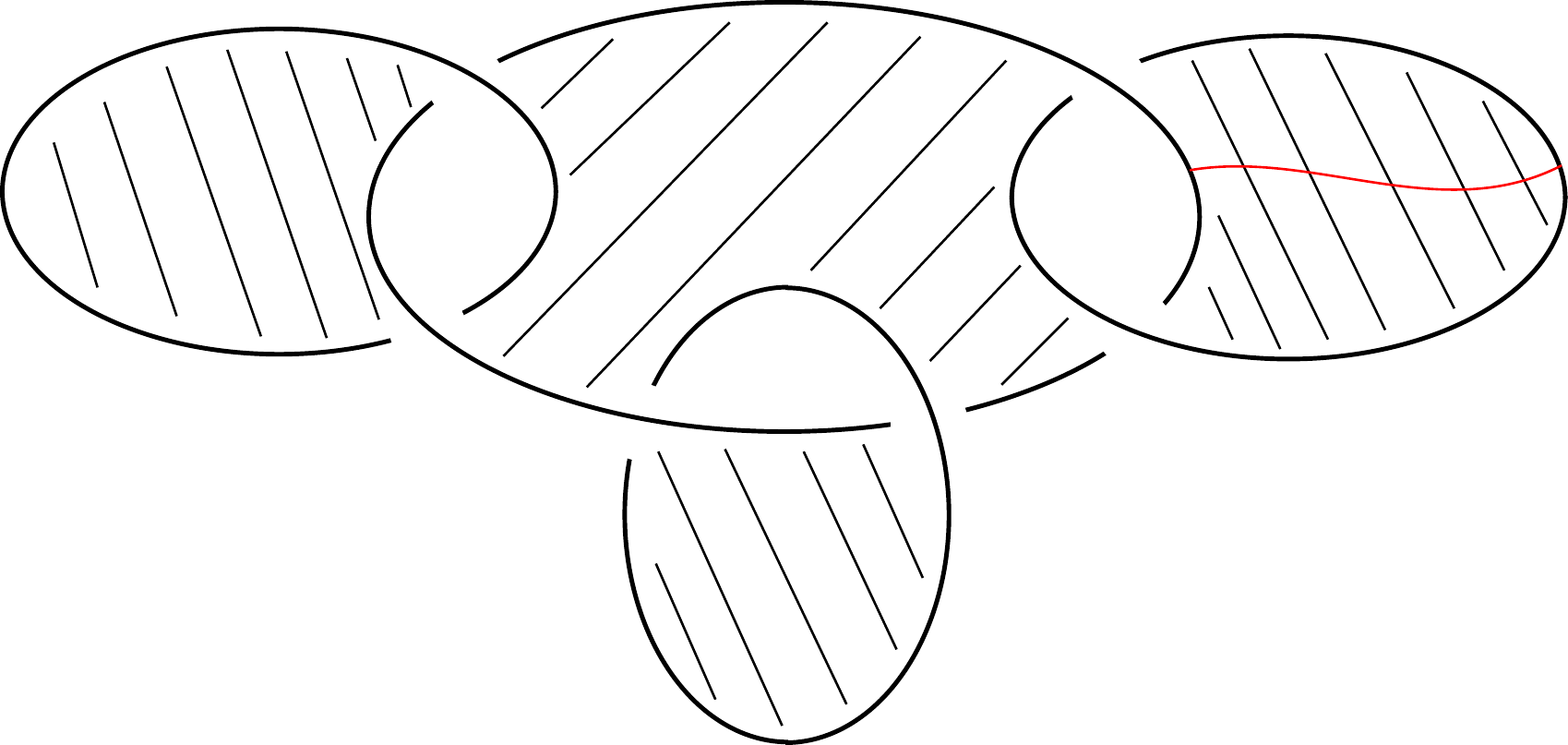}	
\caption{The arc $\delta$ on $S$.}
\label{fig_arc_disjoint_from_theta1}
\end{figure}

For $i=3,\cdots,6$, let $f_i:S\to S$ be the Dehn twist of $S$ along a simple closed curve parallel to $K_i\subset \partial S$. The orientations of the Dehn twists are not important here and we can choose them arbitrarily. 

Cutting $S$ open along $\delta$ yields genus-zero surface with three boundary components. Therefore by \cite[Lemma 9.2]{XZ:forest}, the arc $\theta_1$ is isotopic to $f_3^{u}f_6^{v} (\hat\theta_1)$ on $S$ (relative to $\partial S$) for some $u,v\in\bZ$.

Similarly, the arc $\theta_2$ is isotopic to $f_5^{k}f_6^{l}(\hat\theta_2)$ on $S$ (relative to $\partial S$) for some $k,l\in\bZ$.

Since $\theta_1$ and $\theta_2$ are disjoint, we must have $l=v$.  Therefore by Lemma \ref{lem_L(theta)_under_isotopy}, $L(\theta_1\cup\theta_2)$ is isotopic to $L(\hat\theta_1\cup\hat\theta_2)$ in $S^3-L'$.  As a consequence, the link $L$, which is isotopic to $L'\cup L(\theta_1\cup\theta_2)$, is given by Figure \ref{fig_link_G0}.

\begin{figure}
\includegraphics[width=0.5\textwidth]{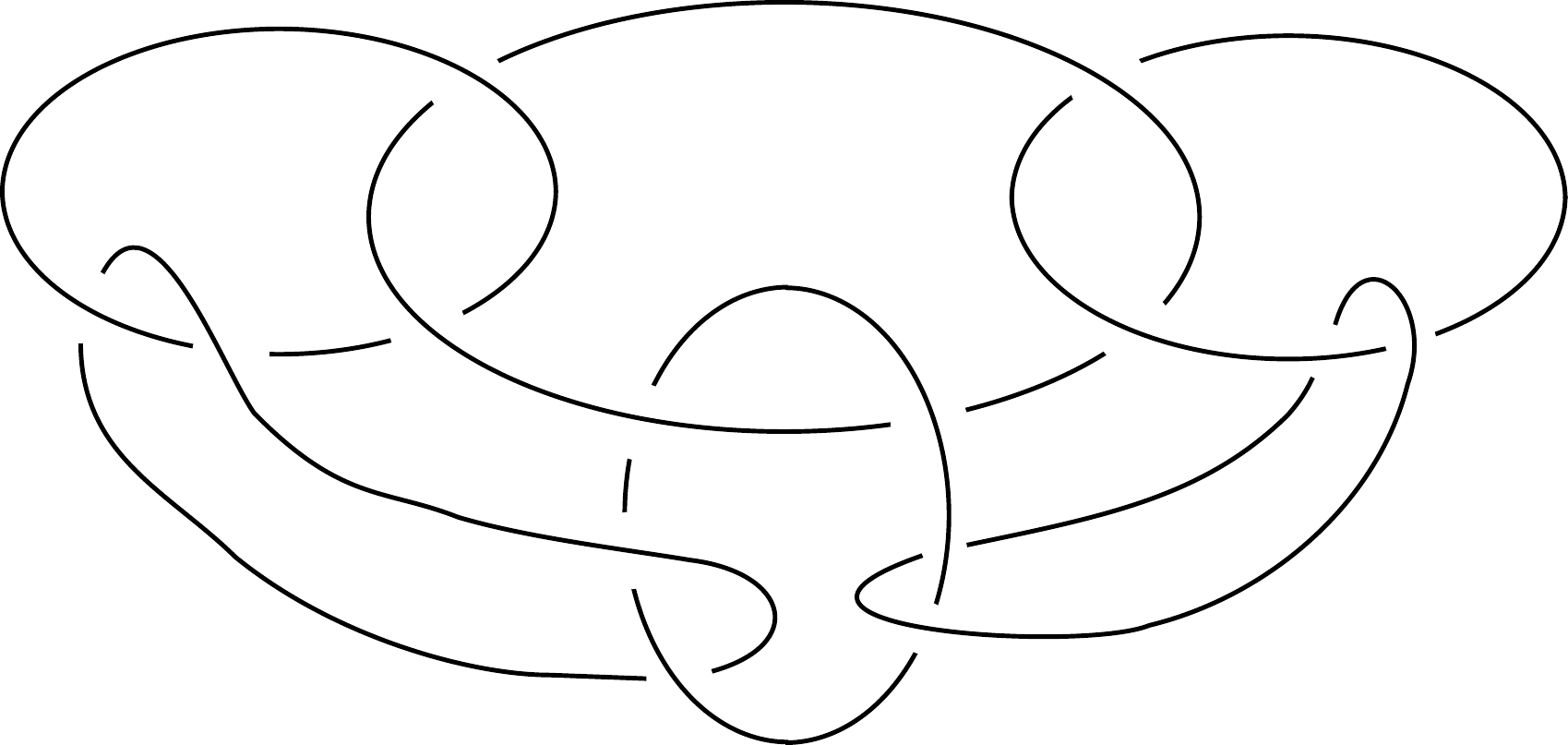}  
\caption{The isotopy class of $L$.}
\label{fig_link_G0}
\end{figure}

\begin{proof}[Proof of Proposition \ref{prop_no_G0}]
Recall that $L$ is a hypothetical 6-component link that has minimal $\II^\natural$ and has linking graph isomorphic to $G_0$. By the arguments above, $L$ has to be isotopic to the link shown in Figure \ref{fig_link_G0}.  By Lemma \ref{lem_lower_bound_I_G0}, this link does not have minimal $\II^\natural$, which yields a contradiction.
\end{proof}

\subsection{Proof of Lemma \ref{lem_free_group_3}}
\label{subsec_G0_lemma_free_group}

Let $F_3$ be the free group generated by $a,b,c$. 

\begin{Definition}
A \emph{word} is a sequence $(u_1,\cdots,u_N)$, where 
$$u_i\in \{a,b,c,a^{-1},b^{-1},c^{-1}\}\subset F_3.$$  The word $(u_1,\cdots,u_N)$ is called \emph{reduced}, if for all $i\in\{1,\cdots,N-1\}$, we have $u_i\neq u_{i+1}^{-1}$. 
\end{Definition}

Every word $(u_1,\cdots,u_N)$ represents an element $u_1 u_2 \cdots u_N$ in $F_3$, and every element in $F_3$ is represented by a unique reduced word.  For $w\in F_3$, we use $\len(w)$ to denote the length of the reduced word representing $w$.

\begin{Lemma}
\label{lem_centralizer_a}
The centralizer of $a$ in $F_3$ is generated by $a$.
\end{Lemma}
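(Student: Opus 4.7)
The statement is a special case of the classical fact that in any free group, the centralizer of a non-trivial element is infinite cyclic and is generated by the unique maximal root of that element. Since $a$ is a free generator of $F_3$, it is primitive (not a proper power), so its maximal root is itself, and the centralizer is $\langle a\rangle$. One could prove the lemma by invoking this theorem (which in turn follows, for instance, from the fact that a free group acts freely on a tree and stabilizers of axes are cyclic, or from the Nielsen--Schreier theorem applied to abelian subgroups). However, since only this specific case is needed, a short direct combinatorial argument is cleaner.

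The plan is to reason on reduced words. Suppose $w\in F_3$ satisfies $aw=wa$, and write $w=u_1 u_2\cdots u_N$ in reduced form, with each $u_i\in\{a^{\pm 1},b^{\pm 1},c^{\pm 1}\}$. I would then split into cases based on whether $u_1$ and $u_N$ are equal to $a^{-1}$, comparing the reduced forms of $aw$ and $wa$ in each case.

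If $u_1\neq a^{-1}$ and $u_N\neq a^{-1}$, both $aw$ and $wa$ are already reduced words of length $N+1$; uniqueness of reduced form then forces letter-by-letter equality, giving $u_1=u_2=\cdots=u_N=a$, hence $w=a^N$. If exactly one of $u_1,u_N$ equals $a^{-1}$, one side reduces to length $N-1$ while the other has length $N+1$, a contradiction. If both $u_1=u_N=a^{-1}$, symmetric cancellation yields $aw=u_2\cdots u_N$ and $wa=u_1\cdots u_{N-1}$, and again matching forces all $u_i=a^{-1}$, so $w=a^{-N}$.

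In every surviving case $w\in\langle a\rangle$, which proves the lemma. The only subtlety is careful bookkeeping of which reductions occur at the ends of $aw$ and $wa$; no deeper input is required.
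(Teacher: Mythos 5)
Your proof is correct, and the direct case analysis on reduced words is essentially the approach the paper takes. The paper organizes the cancellation bookkeeping slightly differently --- it writes $x = x'\cdot x''$ with $x''$ the maximal trailing power of $a$, so that $x'a(x')^{-1}$ is already a reduced word of length $2\len(x')+1$ which must equal $a$, forcing $x'=1$ --- but both arguments are the same elementary reduced-word computation.
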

\begin{proof}
	Suppose $x\in F_3$ satisfies $[x,a]=1$. Then we can write $x$ as $x=x'\cdot x''$, where
	\begin{enumerate}
\item $x''$ is a power of $a$,
\item $\len(x) = \len(x') + \len(x'')$,
\item The reduced word representing $x'$ does not end with $a$ or $a^{-1}$.
\end{enumerate}
By the assumptions, we have $[x',a]=[x,a]=1$, and hence 
\begin{equation}
\label{eqn_[x',a]=1}
	x'\cdot a\cdot (x')^{-1} =a.
\end{equation} 
Let $(u_1,\cdots,u_N)$ be the reduced word representing $x'$. Then 
$$(u_1,\cdots,u_N,a,u_N^{-1},\cdots,u_1^{-1})$$ is a reduced word.  By \eqref{eqn_[x',a]=1}, we must have $N=0$, and hence the desired result is proved.
\end{proof}

\begin{Corollary}
\label{cor_centralizer_of_ab}
	Suppose $\epsilon_1,\epsilon_2\in\{1,-1\}$ and $x=a^{\epsilon_1}b^{\epsilon_2}$ or $b^{\epsilon_1}a^{\epsilon_2}$. Then the centralizer of $x$ in $F_3$ is generated by $x$. 
\end{Corollary}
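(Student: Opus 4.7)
The plan is to reduce the corollary to the standard fact that in a free group, the centralizer of any non-trivial element is infinite cyclic, generated by a unique \emph{primitive root} (an element that is not itself a proper power). Granting this fact — which can be proved by the same word-length style of argument used in Lemma \ref{lem_centralizer_a} — the corollary becomes the assertion that $x=a^{\epsilon_1}b^{\epsilon_2}$ (or $x=b^{\epsilon_1}a^{\epsilon_2}$) is not a proper power in $F_3$, since then its centralizer is $\langle x\rangle$ itself.

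To check that $x$ is not a proper power, I would observe that the reduced word representing $x$ has length $2$ and is cyclically reduced (the first and last letters involve different generators, so they cannot be inverses of each other). If $x=z^n$ with $n\geq 2$, then after replacing $z$ by a conjugate we may assume $z$ is also cyclically reduced, in which case $\len(z^n)=n\cdot\len(z)$. This forces $\len(z)=1$ and $n=2$, so $z$ is one of $a^{\pm1},b^{\pm1},c^{\pm1}$ and $z^2$ is a square of a single generator. But the reduced word for $x$ contains both $a^{\pm1}$ and $b^{\pm1}$, giving a contradiction. Hence $x$ is primitive.

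If instead one wishes to avoid appealing to the general structure of centralizers in free groups, the alternative is to mimic Lemma \ref{lem_centralizer_a} directly. Namely, given $y$ with $[y,x]=1$, decompose $y=y'\cdot x^k$ with $k\in\bZ$ chosen so that the reduced word for $y'$ neither begins nor ends in a way that can be extended or cancelled by a further power of $x$; then expand the identity $y'xy'^{-1}=x$ on the level of reduced words. The cyclic reducedness of $x$ and the non-cancellation condition on $y'$ will force the reduced word for $y'$ to be empty, exactly as in the proof of Lemma \ref{lem_centralizer_a}. The only subtlety over that lemma is bookkeeping of the four cases on signs and the two orderings ($ab$-type vs.\ $ba$-type), but since $x$ is cyclically reduced in all cases the argument is uniform.

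The main (minor) obstacle is making the word-length bookkeeping clean in the direct approach, or alternatively citing cleanly the primitive-root characterization of centralizers in free groups (for instance via \cite{MKS_combinatorial_group_theory} or via the Bass--Serre action of $F_3$ on its Cayley tree, where the centralizer of a hyperbolic element is the stabilizer of its axis). Either route is routine; no new ideas beyond Lemma \ref{lem_centralizer_a} are required.
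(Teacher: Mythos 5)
Your proposal is correct, but it takes a genuinely different and somewhat heavier route than the paper. The paper's entire proof is the observation that $\{x,b,c\}$ is again a free basis of $F_3$ (replacing $a$ by $a^{\epsilon_1}b^{\epsilon_2}$, or $b$ by $b^{\epsilon_1}a^{\epsilon_2}$, is an automorphism of the free group), so Lemma \ref{lem_centralizer_a} applies verbatim with $x$ playing the role of the generator $a$, and there is nothing further to check. Your first route instead invokes the general structure theorem for centralizers in free groups (infinite cyclic, generated by the primitive root) and then verifies that $x$ is not a proper power via cyclic reducedness; this is correct --- writing $z=gwg^{-1}$ with $w$ cyclically reduced gives $\len(z^n)=n\len(w)+2\len(g)$, which for $\len(x)=2$ forces $x$ to be the square of a single generator, impossible since $x$ involves both $a^{\pm1}$ and $b^{\pm1}$ --- but it outsources the main content to a theorem whose proof is at least as long as the argument being avoided. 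Your second route, redoing the word-length argument directly for a cyclically reduced $x$ of length two, also works, though the bookkeeping is genuinely messier than in Lemma \ref{lem_centralizer_a}: conjugating a cyclically reduced word by a single letter either lengthens it or cyclically permutes it, so one must track both cyclic rotations of $x$ rather than a single fixed generator. The change-of-basis trick buys all of this for free and is worth internalizing.
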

\begin{proof}
	$F_3$ is a free group generated by $x,b,c$, thus the desired result follows from Lemma \ref{lem_centralizer_a}.
\end{proof}

\begin{proof}[Proof of Lemma \ref{lem_free_group_3}]
Let $w\in F_3$, and suppose there exist $\epsilon_i\in\{1,-1\}$ for $i=1,\cdots,4$ such that
$a^{\epsilon_1}wb^{\epsilon_2} w^{-1}$ is conjugate to $a^{\epsilon_3}b^{\epsilon_4}$ in $F_3$.
By comparing the images in the abelianization of $F_3$, we have $\epsilon_1 = \epsilon_3$ and $\epsilon_2 = \epsilon_4$. By changing $a,b$ to $a^{\pm 1},b^{\pm1}$, we may assume without loss of generality that $\epsilon_1 = \epsilon_3=-1$ and $\epsilon_2 = \epsilon_4=1$.
By the assumption, there exists $r\in F_3$, such that
\begin{equation}
\label{eqn_awbw}
a^{-1}wbw^{-1} = ra^{-1}br^{-1}.
\end{equation}
We show that both $w$ and $r$ must be included in the subgroup generated by $a,b$.
By \eqref{eqn_awbw}, we have
$$
wbw^{-1}rb^{-1} = ara^{-1},
$$
which implies
$$
(r^{-1}w) b (w^{-1} r) b^{-1}  = r^{-1} a r a^{-1},
$$
therefore
$$
a r^{-1} a^{-1} r = b (r^{-1}w) b^{-1} (w^{-1} r).
$$

Let $x := r^{-1}, y := r^{-1}w$, we study the equation 
\begin{equation}
\label{eqn_axby}
	axa^{-1} x^{-1} = byb^{-1} y^{-1}.
\end{equation}
Since $w=x^{-1} y, r=x^{-1}$, the desired result would follow if both $x$ and $y$ are in the subgroup of $F_3$ generated by $a,b$.

Write $x = x' \cdot x''$, such that
\begin{enumerate}
\item $x''$ is a power of $a$,
\item $\len(x) = \len(x') + \len(x'')$,
\item the reduced word representing $x'$ does not end with $a$ or $a^{-1}$.
\end{enumerate}
Similarly, write $y = y' \cdot y''$, such that
\begin{enumerate}
\item $y''$ is a power of $b$,
\item $\len(y) = \len(y') + \len(y'')$,
\item the reduced word representing $y'$ does not end with $b$ or $b^{-1}$.
\end{enumerate}
Then we have
$$
axa^{-1}x^{-1} = a x' a^{-1} (x')^{-1}, \quad b y b^{-1} y^{-1} = b y' b^{-1} (y')^{-1}.
$$
By \eqref{eqn_axby},
\begin{equation}
\label{eqn}
a x' a^{-1} x'^{-1} = b y' b^{-1} y'^{-1},
\end{equation}
and the definitions of $x'$ and $y'$ imply
$$\len(a x' a^{-1} x'^{-1}) - 2\len(x')= 0 \text{ or }2,$$
$$\len(b y' b^{-1} y'^{-1}) - 2\len(y') = 0\text{ or }2.$$  
Without loss of generality, we assume $\len(x')\ge \len(y')$, then 
$$\len(x')- \len(y')=0 \text{ or } 1.$$ We discuss three cases:\\

Case 1: $\len(y')=0$.  In this case, we have $y'=1$ and $[x',a]=1$, and the desired result follows from Lemma \ref{lem_centralizer_a}.\\

Case 2: $\len(x') = \len(y')$.  In this case, \eqref{eqn} implies
$$
ax'a^{-1} = by'b^{-1},\quad (x')^{-1} = (y')^{-1}.
$$
Therefore $x'=y'$, and they are both commutative with $b^{-1} a$. By Corollary \ref{cor_centralizer_of_ab}, we have $x'=y' = (b^{-1} a)^k$ for some integer $k$, and the desired result follows.\\

Case 3: $\len(x') - 1 = \len(y')\ge 1$. Then \eqref{eqn} implies
$$
ax'a^{-1} = by', \quad (x')^{-1} = b^{-1} (y')^{-1},
$$
thus
$$
x' = y' b, \quad ay'ba^{-1}= by',
$$
therefore $[ay', ba^{-1}]=1.$ By Corollary \ref{cor_centralizer_of_ab}, we have
$$
y'=a^{-1}(ba^{-1})^k, \quad x' = y'b =(a^{-1}b)^{k+1}
$$
for some integer $k$, and the desired result follows.
\end{proof}

\section{Combinatorics}
\label{sec_combinatorics}
This section proves Proposition \ref{prop_graph}.    Recall that the \emph{restriction} of a graph $G$ is defined by Definition \ref{def_restriction_graph}, and $G_0$ is the graph given by Figure \ref{fig_G0_graph}.
We reformulate the statement of Proposition \ref{prop_graph} as follows.
\begin{repProposition}{prop_graph} Suppose $G$ is a finite simple graph with vertex set $V$ that satisfies the following conditions:
\begin{enumerate}
\item $G$ is not a tree,
\item no restriction of $G$ is a cycle with order $n$, where $n\ge 3$ and $n \neq 4$,
\item no restriction of $G$ is isomorphic to $G_0$.
\end{enumerate}
Then there exists a non-constant 
map $\varphi: V\to \{\bfi,\bfj,\bfk\}$, such that 
for every $v\in V$, the image $\varphi(v)$ is commutative to 
$$
\prod_{\{w|w \text{ is adjacent to }v\}} \varphi(w). 
$$
\end{repProposition}

\begin{proof}
We use induction on the number of vertices. The statement is obvious if $G$ is not connected.  We will assume that $G$ is connected from now.

By Condition (2), the graph $G$ does not contain any cycle of odd length, therefore it is a bipartite graph.  Write $V=V_1\sqcup V_2$ such that there is no edge within $V_1$ or within $V_2$.

Let 
$$E:=\{(x,y)\in V\times V| x \text{ and } y \text{ are adjacent in } G\}.$$ 

If $V'$ is a subset of $V$, we use 
$N(V')$ to denote the set of $x\in V$ such that there exists a $y\in V'$ with $(x,y)\in E$. For $v\in V$, we also use $N(v)$ to denote $N(\{v\})$.

Take $x_0,y_0\in V_1$ to be a pair of distinct vertices such that 
$\# \, N(x_0)\cap N(y_0)$ is maximal among all pairs in $V_1$. Let 
\begin{align*}
A &:= N(x_0)\cap N(y_0),\\
B &:= \{v\in V_1:\, A\subseteq N(v)\},\\
C &:= N(B)-A.
\end{align*}
Then $B\subseteq V_1$ and $A,C\subseteq V_2$. Every vertex in $B$ is adjacent to all vertices in $A$. Since $\# \, N(x_0)\cap N(y_0)$ is maximal, every vertex in $C$ is adjacent to a \emph{unique} vertex in $B$.

By the assumptions, $G$ is connected and is not a tree, therefore $G$ contains at least one cycle. By Condition (2), there exists a restriction of $G$ which is isomorphic to a cycle of order 4. Therefore, both $A$ and $B$ contain at least two elements.

We prove the following lemma.

\begin{Lemma}
\label{lem_graph_A_in_N(x)}
	Suppose there exists $x\in B$ such that $A \subsetneqq N(x)$, 
then the restriction of $G$ to $V-\{x\}$ is disconnected.
\end{Lemma}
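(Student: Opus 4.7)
The plan is to argue by contradiction: assume the restriction of $G$ to $V-\{x\}$ is connected, and then exhibit either a cycle restriction of length $\geq 3$ and $\neq 4$ or a restriction isomorphic to $G_0$, contradicting hypothesis (2) or (3). Since $x\in B$ and $A\subsetneq N(x)$, pick $c\in N(x)-A\subseteq C$; bipartiteness and the structure of $C$ ensure that $x$ is the unique neighbor of $c$ in $B$. Fix also $a\in A$ and $y\in B-\{x\}$, available because $|A|,|B|\geq 2$. The connectedness assumption gives a path from $c$ to $a$ in $G-\{x\}$; choose a shortest such path $P=(c=u_0,u_1,\ldots,u_m=a)$. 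Since $c,a\in V_2$, the length $m$ is even and $m\geq 2$. Observe $u_1\notin B$ (else it would be a second neighbor of $c$ in $B$), so $A\not\subseteq N(u_1)$, and we pick $a'\in A\setminus(N(u_1)\cup\{a\})$.

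For the case $m=2$, I would check directly that the restriction of $G$ to $V'=\{x,y,a,a',c,u_1\}$ is isomorphic to $G_0$. Bipartiteness rules out edges within $\{x,y,u_1\}\subseteq V_1$ and within $\{a,a',c\}\subseteq V_2$; the relations $x,y\in B$ and $a,a'\in A$ give $x-a,\ x-a',\ y-a,\ y-a'$; the hypothesis gives $x-c$; the path $P$ gives $u_1-c$ and $u_1-a$; the choice of $a'$ eliminates $u_1-a'$; and uniqueness of $c$'s neighbor in $B$ eliminates $y-c$. These are exactly the $7$ edges of $G_0$, with $x$ and $a$ as the two adjacent degree-$3$ vertices.

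For the case $m\geq 4$, the plan is to analyze the chord structure of the cycle $\mathcal{C}=(x,c,u_1,\ldots,u_{m-1},a,x)$. Minimality of $P$ rules out chords of the forms $u_i-u_j$ with $|i-j|\geq 2$, $c-u_i$ for odd $i\geq 3$, and $a-u_i$ for odd $i\leq m-3$, since each such chord would produce a strictly shorter path from $c$ to $A$ in $G-\{x\}$; and bipartiteness rules out $x-u_i$ for odd $i$. Hence the only possible chords are $x-u_i$ for even $i\in[2,m-2]$; let $I$ be the set of these $i$ that occur. If $I=\emptyset$, the restriction of $G$ to the vertex set of $\mathcal{C}$ is a cycle of length $m+2\geq 6$, contradicting (2). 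Otherwise, the subcycle $(x,c,u_1,\ldots,u_{\min I},x)$ is already an isolated cycle restriction of length $\min I+2$, forcing $\min I=2$; then examining the next subcycle through $\{x,u_2,\ldots,u_{i_2}\}$ (with $i_2$ the next element of $I$, or $m$ if $|I|=1$) forces either $4\in I$ or $m=4$. In either situation, a direct edge enumeration — completely analogous to the $m=2$ verification — shows that the restriction of $G$ to $\{x,c,u_1,u_2,u_3,u_4\}$ in the first case, or to $\{x,c,u_1,u_2,u_3,a\}$ in the second, is isomorphic to $G_0$, contradicting (3). The main obstacle is precisely this chord-by-chord cascade: one must carefully use the minimality of $P$ at each step to rule out shortcuts, then perform the edge enumeration to confirm the final six-vertex restriction is exactly $G_0$ rather than some accidental graph with the same degree sequence.
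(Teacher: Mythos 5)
Your proof is correct, and it follows the same overall strategy as the paper's: take a shortest path avoiding $x$ from a vertex of $N(x)-A$ to a vertex of $A$, use bipartiteness and minimality to control the chords, and then exhibit either a forbidden cycle or a copy of $G_0$. The one substantive difference is how the long-path case is dispatched. The paper minimizes the path length over \emph{all} pairs $(y,z)\in (N(x)-A)\times A$ simultaneously; with that global minimality, once Condition (2) forces the chord from $x$ to the second interior vertex $v_2$, one observes $v_2\in N(x)-A$, so the tail of the path is a strictly shorter path from $N(x)-A$ to $A$ avoiding $x$, and the path must have length $2$; only the terminal $G_0$ configuration (your $m=2$ case) then needs to be checked. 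You instead fix the endpoints $c$ and $a$, so you cannot shorten the path this way and must carry the chord cascade further; this is why you also encounter a second $G_0$ configuration, namely two consecutive $4$-cycles through $x$ sharing the edge $x$--$u_2$ (your cases $4\in I$ and $m=4$). Both routes are sound: the paper's global minimization buys a much shorter case analysis, while yours avoids the slightly delicate step of re-entering the set $N(x)-A$ partway along the path. One small point worth making explicit in your $m=2$ case: the set $A\setminus(N(u_1)\cup\{a\})$ is nonempty because $a=u_2$ is adjacent to $u_1$, so excluding $a$ costs nothing beyond $A\not\subseteq N(u_1)$.
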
 

\begin{proof}  Assume the contrary. So for each $y\in N(x)-A$ and $z\in A$, there exists a path from $y$ to $z$ without passing through $x$.  Choose a shortest path $(y,v_1,\cdots, v_k, z)$ among all possible choices of $y$ and $z$. Since $V$ is bipartite, $k$ is odd.  

Since the path $(y,v_1,\cdots, v_k, z)$ is shortest, the restriction of $G$ to $\{y,v_1,\cdots,v_k, z\}$ does not contain any additional edge other than the edges on the path, and 
\begin{equation}
\label{eqn_v_i_disjoint_A}
	\{v_1,\cdots, v_k\}\cap A = \emptyset.
\end{equation} 

Let $G'$ be the restriction of $G$ to $\{x,y,v_1,\cdots,v_k,z\}$. Notice that $(x,y),(x,z)\in E$.
By Condition (2), no restriction of $G'$ is a cycle of order $\neq 4$. Since $G'$ is a bipartite graph, we must have 
\begin{equation}
\label{eqn_x_adjacent_to_vi}
	(x,v_i)\in E \quad \text{for all even } i.
\end{equation}

If $k\ge 3$, then by \eqref{eqn_v_i_disjoint_A} and  \eqref{eqn_x_adjacent_to_vi}, we have $v_2\in N(x)-A$, so the path $(v_2,\cdots,v_k,z)$ is a shorter path connecting $N(x)-A$ to $A$ without passing through $x$, contradicting the assumptions.  Therefore $k=1$. 

Since $x$ is the unique element in $B$ adjacent to $y$ and $v_1\neq x$, we have $v_1\notin B$. Hence there exists $z_1\in A$ such that $(z_1,v_1)\notin E$. Choose $x_1\in B$ such that $x_1\neq x$, then the restriction of $G$ to $\{x,x_1,v_1,z,z_1,y\}$ is isomorphic to $G_0$, which contradicts Condition (3).
\end{proof}

Back to the proof of Proposition \ref{prop_graph}. We discuss three cases:\\

{\bf Case 1:} There exists a vertex in $G$ with degree no greater than 1. 

In this case, the desired result follows immediately from the induction hypothesis.\\

{\bf Case 2:}
There exists $x\in B$ such that $A \subsetneqq N(x)$.  

By Lemma \ref{lem_graph_A_in_N(x)}, the graph $G$ becomes disconnected after removing the vertex $x$. Therefore, there exists finitely many graphs $G^{(1)},\cdots, G^{(k)}$ with $k\ge 2$, such that the following statements hold:
\begin{enumerate}
	\item  each $G^{(i)}$ has at least two vertices and satisfies Conditions (2) and (3);
	\item for each $i$, there is a vertex $v^{(i)}$ of $G^{(i)}$, such that $G$ is isomorphic to the quotient graph of $\sqcup G^{(i)}$ after identifying all $v^{(i)}$'s to one vertex.
\end{enumerate}
Recall that $V$ denotes the vertex set of $G$. 
Let $V^{(i)}$ be the vertex set of $G^{(i)}$.  We view $V^{(i)}$ as a subset of $V$ using the isomorphisms in Statement (2) above. 

Since $G$ is not a tree, there exists $i\in\{1,\cdots,k\}$ such that $G^{(i)}$ is not a tree. Let $\varphi_{i}: V^{(i)}\to \{\bfi,\bfj,\bfk\}$ be the map given by the induction hypothesis on $G^{(i)}$. Extending $\varphi_{i}$ to $V-V^{(i)}$ by the constant value $\varphi(v^{(i)})$ yields the desired map $\varphi$. \\

{\bf Case 3:}
$A = N(x)$ for all $x\in B$, and all vertices of $G$ have degrees at least 2.

In this case, we have $C=\emptyset$.
Recall that $A$ and $B$ both have at least two elements.
Let $x_1,x_2$ be two distinct elements of $B$, and let $G'$ be the restriction of $G$ to $V-\{x_1,x_2\}$. We discuss two sub-cases depending on whether $G'$ is a tree:

{\bf Case 3.1:}  If $G'$ is not a tree, let $\varphi':V-\{x_1,x_2\}\to\{\bfi,\bfj,\bfk\}$ be the map obtained from the induction hypothesis on $G'$. Extending $\varphi'$ to $V$ by taking $\varphi(x_1)=\varphi(x_2)$ and letting them be an element commutative to 
$$
\prod_{v\in A} \varphi'(v)
$$
yields the desired map $\varphi$.

{\bf Case 3.2:} If $G'$ is a tree, then $\#B = 2$ or $3$.  If $\#B = 3$, then the assumption that all vertices of $G$ have degrees at least $2$ implies that $B=V_1$ and $A=V_2$. If $\#B=2$ and $G'$ is a tree, then Condition (2) and the assumption that all vertices of $G$ have degrees at least $2$ imply that every vertex in $V-A-B$ is adjacent to at least two vertics in $A$. It then follows from Condition (3) and the assumption that $G'$ is a tree that if $V-A-B$ is non-empty, then it contains exactly one element that is adjacent to all the elements of $A$. By the definition of $B$, this implies $V-A-B=\emptyset$.

Therefore, we conclude that $V$ is a complete bipartite graph with $\# V_1\ge 2$ and $\# V_2\ge 2$. In this case, one can construct a non-constant map $\varphi:V\to\{\bfi,\bfj,\bfk\}$ directly by requiring
$$
\prod_{v\in V_1} \varphi(v) = \pm 1, \quad \prod_{v\in V_2} \varphi(v) =\pm 1. 
\phantom\qedhere\makeatletter\displaymath@qed
$$
\end{proof}


\appendix
\section{Lower bounds of $\II^\natural$ from Alexander polynomials}
The computations in this section rely on the Mathemtica package ``KnotThoery'' from The Knot Atlas \cite{knotatlas}.  In particular, we use a program written by Jana Archibald that computes the multivariable Alexander polynomial.

\begin{Lemma}
\label{lem_lower_bound_I_L5-3}
	Let $L_{5,-3}$ be the link defined in Section \ref{subsec_isotopy_L_cycle}, and suppose $p\in L_{5,-3}$. We have 
	$$
	\dim_\bC \II^\natural(L_{5,-3},p;\bC)> 16.
	$$
\end{Lemma}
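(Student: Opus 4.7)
The plan is to apply Proposition \ref{prop_I>Alexander} to $L_{5,-3}$ with $n=5$, so the ``trivial'' lower bound from the determinant reads
\[
2^{4}\dim_{\bC}\II^{\natural}(L_{5,-3},p;\bC)\ge \|(1-x)^{4}\Delta_{L_{5,-3}}(x)\|\ge 2^{4}\det(L_{5,-3})=16\cdot 1=16,
\]
which only gives $\dim\II^{\natural}\ge 16$ and is not enough. The idea is therefore to refine the right-hand side: either pass to the multivariable Alexander polynomial $\tilde{\Delta}_{L_{5,-3}}(x_{1},\ldots,x_{5})$ and use the full strength of \eqref{eqn_I>Alexander}, or observe that the single-variable polynomial $(1-x)^{4}\Delta_{L_{5,-3}}(x)$ has mixed-sign coefficients, so that its $\ell^{1}$-norm strictly exceeds $|{\rm evaluation\ at\ }{-1}|=2^{4}\det(L_{5,-3})=256$. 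In either case, a strictly larger lower bound forces $\dim_{\bC}\II^{\natural}(L_{5,-3},p;\bC)\ge 17>16$.

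The concrete steps I would carry out are as follows. First, produce a PD-code or braid-word presentation of $L_{5,-3}$ from the diagram in Figure \ref{fig_Luv2} with $u=5$, $v=-3$. Second, feed this presentation into the \texttt{KnotTheory} package together with Archibald's multivariable Alexander routine to obtain $\tilde{\Delta}_{L_{5,-3}}(x_{1},\ldots,x_{5})$ (or, as a simpler fallback, obtain $\Delta_{L_{5,-3}}(x)$ from the one-variable routine). Third, multiply by $(1-x_{1})(1-x_{2})\cdots(1-x_{5})$ (respectively $(1-x)^{4}$), expand, and sum the absolute values of the coefficients. Fourth, compare the result against $2^{4}\cdot 16=256$: as soon as it is strictly larger than $256$, Proposition \ref{prop_I>Alexander} yields $\dim_{\bC}\II^{\natural}(L_{5,-3},p;\bC)\ge 17$, which is what we want.

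The main obstacle is essentially bookkeeping rather than conceptual: one has to make sure the diagram encoding matches Figure \ref{fig_Luv2}, and that the resulting polynomial really has the claimed $\ell^{1}$-norm. Because $(1-x)^{4}\Delta_{L_{5,-3}}(x)$ evaluates to $\pm 256$ at $x=-1$ while having several coefficients of opposite signs (a feature typical of chain-type links), the strict inequality $\|(1-x)^{4}\Delta_{L_{5,-3}}(x)\|>256$ is expected to hold; using the multivariable version gives an even safer margin. Once this numerical check is carried out, the proof of the lemma consists of a single application of Proposition \ref{prop_I>Alexander}.
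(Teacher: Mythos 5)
Your approach is exactly the paper's: apply Proposition \ref{prop_I>Alexander} to $L_{5,-3}$ and verify that the $\ell^1$-norm of $(1-x_1)\cdots(1-x_5)\tilde{\Delta}_{L_{5,-3}}$ exceeds $2^4\cdot 16=256$ by a machine computation of the multivariable Alexander polynomial from a PD code of Figure \ref{fig_Luv2}. The paper carries this out with the KnotTheory package and Archibald's routine and obtains the value $300$, whence $\dim_\bC\II^\natural(L_{5,-3},p;\bC)\ge\ceil*{300/16}=19$.

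The gap is that you never perform the computation that constitutes the entire content of the lemma: the assertion that the strict inequality ``is expected to hold'' because chain-type links tend to have mixed-sign coefficients is a heuristic, not an argument, and the lemma is needed precisely because the crude determinant bound lands exactly on the borderline $256$. Two smaller points. First, $\det(L_{5,-3})=2^{4}|5+2(-3)|=16$, not $1$; your first display should read $2^4\det(L_{5,-3})=256$, giving $\dim\ge 16$ (your later threshold of $256$ is the correct one, so this is only a slip). Second, the single-variable fallback is not safe: since $(1-x)^4\Delta_L(x)$ is the specialization $x_i\mapsto x$ of $(1-x_1)\cdots(1-x_5)\tilde{\Delta}_L$, its $\ell^1$-norm can only be smaller due to cancellation among monomials, so it could a priori collapse to $256$ even though the multivariable norm is $300$; one should commit to the multivariable computation, as the paper does.
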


\begin{figure}
	\includegraphics[width=0.9\textwidth]{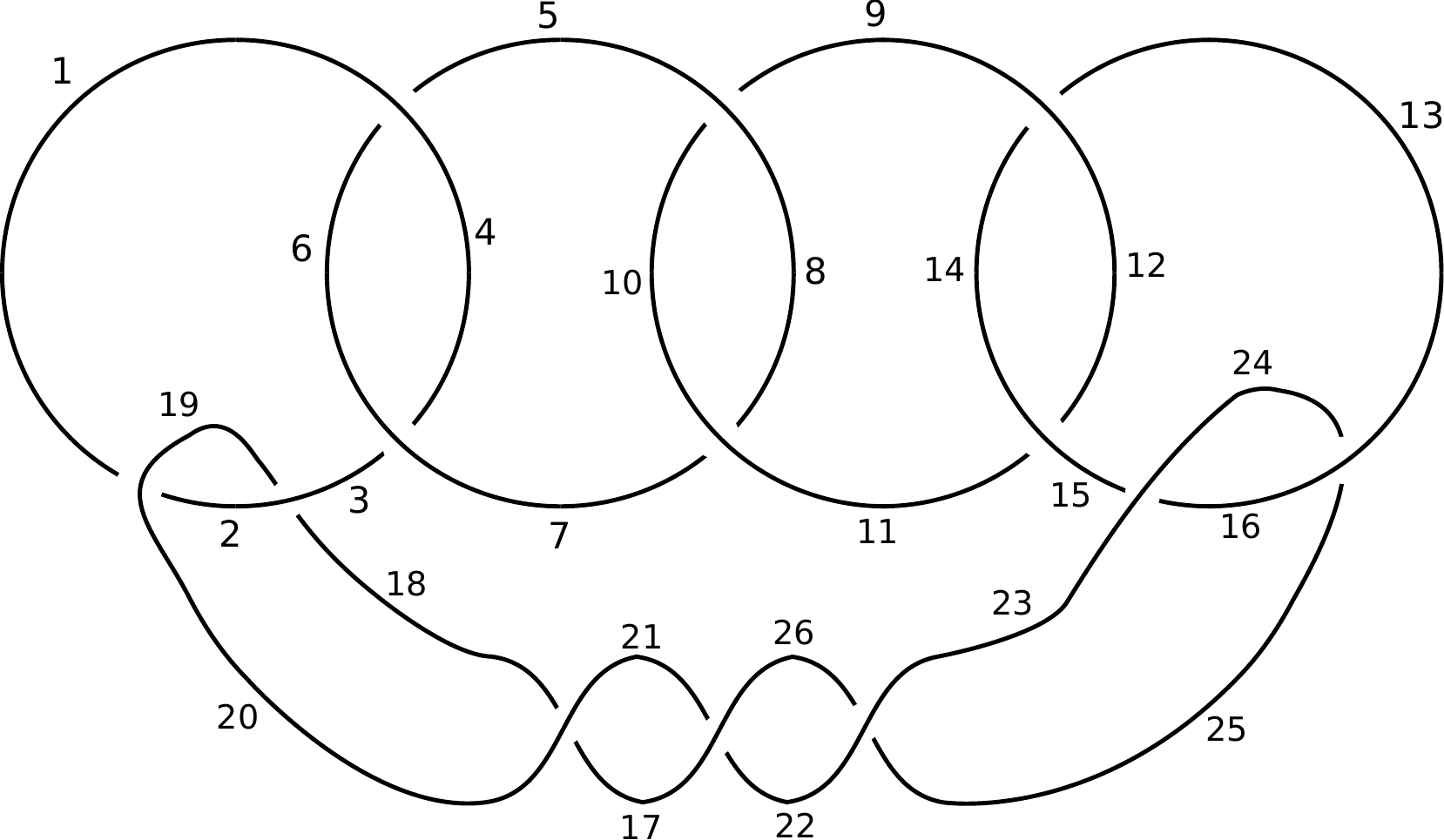}	
	\caption{A planar diagram of $L_{5,-3}$.}
	\label{fig_L5-3}
\end{figure}

\begin{proof}
	We apply Proposition \ref{prop_I>Alexander} to the link $L_{5,-3}$. For the link $L_{5,-3}$, the right-hand side of \eqref{eqn_I>Alexander} is equal to $300$. This is computed using the planar diagram in Figure \ref{fig_L5-3} and the following Mathematica code:\footnote{The code was executed in Mathematica 12.1.1.0. For the set up of the KnotTheory package, see \url{http://katlas.org/wiki/Setup}. The package we used was retrieved on April 2, 2021.}
\begin{lstlisting}
	<< KnotTheory`;
	L = PD[X[5,1,6,4],X[1,20,2,19],X[18,3,19,2],
		X[3,7,4,6],X[9,5,10,8],X[7,11,8,10],
		X[13,9,14,12],X[11,15,12,14],X[15,23,16,24],
		X[24,16,25,13],X[25,23,26,22],X[21,17,22,26],
		X[17,21,18,20]];
	mva = MultivariableAlexander[L][x]/.{x[1]->x1,
		x[2]->x2,x[3]->x3,x[4]->x4,x[5]->x5};
	f = mva*Sqrt[x1]*Sqrt[x2]*Sqrt[x3]*Sqrt[x4]*
		 Sqrt[x5]; (*remove the denominator from mva*)
	Total[Abs/@Flatten[CoefficientList[Expand
		[f*(x1-1)*(x2-1)*(x3-1)*(x4-1)*(x5-1)],
		{x1,x2,x3,x4,x5}]]]
	 \end{lstlisting}	 
Therefore, we have
	$$\dim  \II^\natural (L,p;\bC) \ge \ceil*{\frac{300}{16}}=19, $$
	and the result is proved. 
\end{proof}

\begin{Lemma}
\label{lem_lower_bound_I_G0}
	Let $L$ be the link given by Figure \ref{fig_link_G0}, suppose $p\in L$. Then 
	$$
	\dim_\bC \II^\natural (L,p;\bC) > 32.
	$$ 
\end{Lemma}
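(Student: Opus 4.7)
The plan is to prove Lemma \ref{lem_lower_bound_I_G0} in exact parallel with the proof of Lemma \ref{lem_lower_bound_I_L5-3}, namely by applying the multivariable Alexander inequality \eqref{eqn_I>Alexander} of Proposition \ref{prop_I>Alexander}. Since $L$ has $n=6$ components, the inequality reads
\[
2^{5}\,\dim_\bC \II^\natural(L,p;\bC)\ \ge\ \bigl\|(1-x_1)(1-x_2)\cdots(1-x_6)\,\tilde{\Delta}_L(x_1,\ldots,x_6)\bigr\|,
\]
so it suffices to exhibit a planar diagram of $L$ and verify by a direct computation that the right-hand side exceeds $32\cdot 32 = 1024$. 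Then $\dim_\bC\II^\natural(L,p;\bC) \ge \lceil (\text{RHS})/32\rceil \ge 33 > 32$, as required.

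First I would read off an explicit PD-code for the diagram in Figure \ref{fig_link_G0}: label the arcs so that the six components of $L = L'\cup L(\hat\theta_1\cup\hat\theta_2)$ are recorded by their crossings in the usual Wirtinger fashion, being careful to distribute the variables $x_1,\ldots,x_6$ according to the six components (this indexing is all that matters because $\|\cdot\|$ is invariant under relabelling and under the substitutions $x_i\mapsto x_i^{\pm 1}$ needed to clear the half-integer exponents). Then I would feed this PD-code into the \texttt{MultivariableAlexander} routine from the KnotTheory package (exactly as in the Mathematica snippet used for Lemma \ref{lem_lower_bound_I_L5-3}), clear the $\sqrt{x_i}$ denominators, multiply by $\prod_{i=1}^{6}(1-x_i)$, expand, and sum the absolute values of all coefficients.

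The main (and only) obstacle is bookkeeping: the diagram of $L$ arising from Figure \ref{fig_link_G0} has a moderately large number of crossings, and one must correctly match each arc to the correct component variable $x_i$ before invoking the package. Once the PD-code is entered correctly the rest is a purely mechanical computation, entirely analogous to the code displayed in the proof of Lemma \ref{lem_lower_bound_I_L5-3}; in particular there is no subtle gauge-theoretic input beyond Proposition \ref{prop_I>Alexander}. Based on the visible complexity of the diagram (with the four components of $L'$ forming a chain of three Hopf links and the two extra components $K_1,K_2$ each linking two chain components), one expects the value of $\|(1-x_1)\cdots(1-x_6)\tilde{\Delta}_L\|$ to be comfortably into the thousands, so the margin over $1024$ should be ample; if the resulting bound were to fall short, one could instead use the refinement $\dim\KHI(L;\bC)\ge \|\cdots\|$ combined with \eqref{eqn_bound_I_from_KHI}, but this contingency is unlikely to be needed.
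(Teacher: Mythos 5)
Your proposal matches the paper's proof exactly: the paper applies Proposition \ref{prop_I>Alexander} to the planar diagram of $L$ in Figure \ref{fig_link_G0_PD}, computes $\|(1-x_1)\cdots(1-x_6)\tilde{\Delta}_L\|=1216$ with the same KnotTheory/Mathematica routine used for Lemma \ref{lem_lower_bound_I_L5-3}, and concludes $\dim_\bC\II^\natural(L,p;\bC)\ge 1216/32=38>32$. The only caveat is that the actual value $1216$ clears the threshold $1024$ by less of a margin than your ``comfortably into the thousands'' estimate suggests, but the argument goes through as you describe.
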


\begin{figure}
	\includegraphics[width=0.9\textwidth]{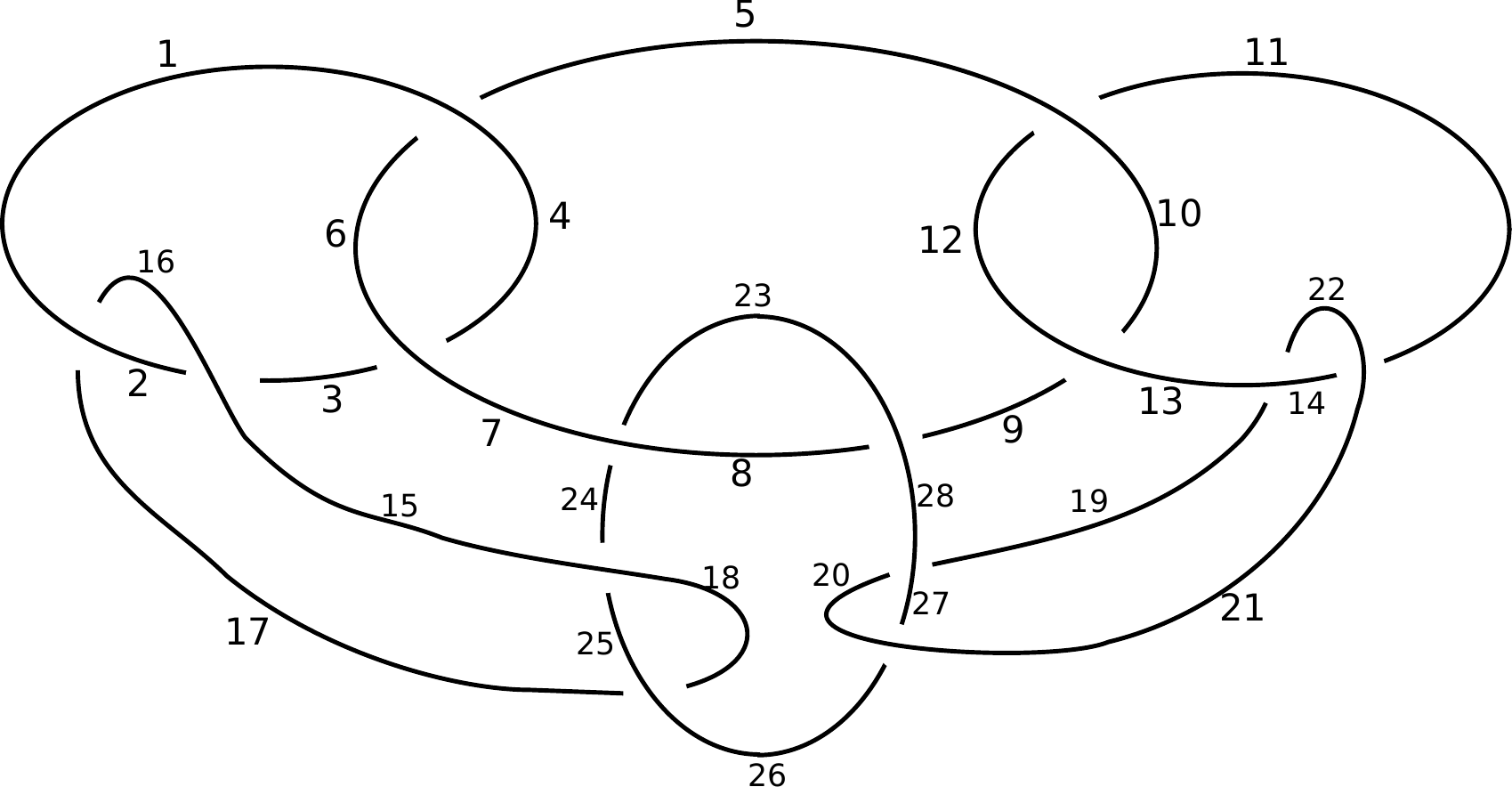}	
	\caption{A planar diagram of $L$.}
	\label{fig_link_G0_PD}
	\end{figure}

\begin{proof}
	We apply Proposition \ref{prop_I>Alexander} to the link $L$. For the link $L$, the right-hand side of \eqref{eqn_I>Alexander} is equal to 1216. This is computed using the planar diagram in Figure \ref{fig_link_G0_PD} and the following Mathematica code:
\begin{lstlisting}
	<< KnotTheory`;
	L = PD[X[5,1,6,4],X[3,7,4,6],X[16,1,17,2],
		X[2,15,3,16],X[23,7,24,8],X[8,28,9,23],
		X[9,13,10,12],X[11,5,12,10],X[22,13,19,14],
		X[14,21,11,22],X[19,28,20,27],X[26,21,27,20],
		X[24,15,25,18],X[17,26,18,25]];
	mva = MultivariableAlexander[L][x]/.{x[1]->x1,
		x[2]->x2,x[3]->x3,x[4]->x4,x[5]->x5,x[6]->x6};
	f = mva*Sqrt[x1]*Sqrt[x2]*Sqrt[x3]*Sqrt[x4]*
		 x5*x6; (*remove the denominator from mva*)
	Total[Abs/@Flatten[CoefficientList[Expand
		[f*(x1-1)*(x2-1)*(x3-1)*(x4-1)*(x5-1)*(x6-1)],
		{x1,x2,x3,x4,x5,x6}]]]
	 \end{lstlisting}	 
Therefore, we have
	$$\dim  \II^\natural (L,p;\bC) \ge \frac{1216}{32}=38, $$
	and the result is proved. 
\end{proof}

\bibliographystyle{amsalpha}
\bibliography{references}

@article {Lin_cyclic,
    AUTHOR = {Lin, Jianfeng},
     TITLE = {{$\rm{SU}(2)$}-cyclic surgeries on knots},
   JOURNAL = {Int. Math. Res. Not. IMRN},
  FJOURNAL = {International Mathematics Research Notices. IMRN},
      YEAR = {2016},
    NUMBER = {19},
     PAGES = {6018--6033},
      ISSN = {1073-7928},
   MRCLASS = {57M25},
  MRNUMBER = {3567265},
MRREVIEWER = {Ichiro Torisu},
       DOI = {10.1093/imrn/rnv326},
       URL = {https://doi.org/10.1093/imrn/rnv326},
}

@article{Lackenby_unknot_coNP,
  title={The efficient certification of knottedness and Thurston norm},
  author={Lackenby, Marc},
  journal={arXiv preprint, arXiv:1604.00290},
  year={2016}
}

@article{Kuperberg_unknot_coNP,
    AUTHOR = {Kuperberg, Greg},
     TITLE = {Knottedness is in {NP}, modulo {GRH}},
   JOURNAL = {Adv. Math.},
  FJOURNAL = {Advances in Mathematics},
    VOLUME = {256},
      YEAR = {2014},
     PAGES = {493--506},
      ISSN = {0001-8708},
   MRCLASS = {57M25 (11M26 68Q25)},
  MRNUMBER = {3177300},
MRREVIEWER = {Jonathan Spreer},
       DOI = {10.1016/j.aim.2014.01.007},
       URL = {https://doi.org/10.1016/j.aim.2014.01.007},
}

@article{lin1992knot,
  title={A knot invariant via representation spaces},
  author={Lin, Xiao-Song},
  journal={Journal of Differential Geometry},
  volume={35},
  number={2},
  pages={337--357},
  year={1992},
  publisher={Lehigh University}
}

@article{daemi2020chern,
  title={Chern-Simons functional, singular instantons, and the four-dimensional clasp number},
  author={Daemi, Aliakbar and Scaduto, Christopher},
  journal={arXiv preprint, arXiv:2007.13160},
  year={2020}
}

@article{daemi_ribbon,
  title={Ribbon homology cobordisms},
  author={Daemi, Aliakbar and  Lidman, Tye and  Vela-Vick, David Shea and
   Wong, C.-M. Michael},
  journal={arXiv preprint, arXiv:1904.09721},
  year={2019}
}

@article {Daemi_homology_cobordism,
    AUTHOR = {Daemi, Aliakbar},
     TITLE = {Chern-{S}imons functional and the homology cobordism group},
   JOURNAL = {Duke Math. J.},
  FJOURNAL = {Duke Mathematical Journal},
    VOLUME = {169},
      YEAR = {2020},
    NUMBER = {15},
     PAGES = {2827--2886},
      ISSN = {0012-7094},
   MRCLASS = {57R58 (57R57)},
  MRNUMBER = {4158669},
       DOI = {10.1215/00127094-2020-0017},
       URL = {https://doi.org/10.1215/00127094-2020-0017},
}

@article{Taniguchi_seifert,
  title={Seifert hypersurfaces of 2-knots and {C}hern-{S}imons functional},
  author={Taniguchi, Masaki },
  journal={arXiv preprint, arXiv:1910.02234},
  year={2019}
}

@misc {knotatlas,
  AUTHOR = {Bar-Natan, Dror and  Morrison, Scott and others},
     TITLE = {The {K}not {A}tlas},
   note  = {\url{http://katlas.org/wiki/The_Mathematica_Package_KnotTheory}},
}

@book{farb2011primer,
  title={A primer on mapping class groups (pms-49)},
  author={Farb, Benson and Margalit, Dan},
  year={2011},
  publisher={Princeton University Press}
}

@article{KM:suture,
  title={Knots, sutures, and excision},
  author={Kronheimer, P. B. and Mrowka, T. S.},
  journal={Journal of Differential Geometry},
  volume={84},
  number={2},
  pages={301--364},
  year={2010},
  publisher={Lehigh University}
}

@article{AHI,
  title={Instantons and Annular {K}hovanov Homology},
  author={Xie, Yi},
  journal={arXiv preprint, arXiv:1809.01568},
  year={2018}
}

@article {Kh-unlink,
    AUTHOR = {Batson, Joshua and Seed, Cotton},
     TITLE = {A link-splitting spectral sequence in {K}hovanov homology},
   JOURNAL = {Duke Math. J.},
  FJOURNAL = {Duke Mathematical Journal},
    VOLUME = {164},
      YEAR = {2015},
    NUMBER = {5},
     PAGES = {801--841},
      ISSN = {0012-7094},
   MRCLASS = {57M27 (57M25)},
  MRNUMBER = {3332892},
MRREVIEWER = {Jacob Mostovoy},
       DOI = {10.1215/00127094-2881374},
       URL = {https://doi.org/10.1215/00127094-2881374},
}

@article {KM:YAFT,
    AUTHOR = {Kronheimer, P. B. and Mrowka, T. S.},
     TITLE = {Knot homology groups from instantons},
   JOURNAL = {J. Topol.},
  FJOURNAL = {Journal of Topology},
    VOLUME = {4},
      YEAR = {2011},
    NUMBER = {4},
     PAGES = {835--918},
      ISSN = {1753-8416},
   MRCLASS = {57R58 (57M27)},
  MRNUMBER = {2860345},
MRREVIEWER = {Nikolai N. Saveliev},
       DOI = {10.1112/jtopol/jtr024},
       URL = {https://doi.org/10.1112/jtopol/jtr024},
}

@article {KM:Kh-unknot,
    AUTHOR = {Kronheimer, P. B. and Mrowka, T. S.},
     TITLE = {Khovanov homology is an unknot-detector},
   JOURNAL = {Publ. Math. Inst. Hautes \'Etudes Sci.},
  FJOURNAL = {Publications Math\'ematiques. Institut de Hautes \'Etudes
              Scientifiques},
    NUMBER = {113},
      YEAR = {2011},
     PAGES = {97--208},
      ISSN = {0073-8301},
   MRCLASS = {57M27 (18G40 57R58)},
  MRNUMBER = {2805599},
MRREVIEWER = {Raphael Zentner},
       DOI = {10.1007/s10240-010-0030-y},
       URL = {http://dx.doi.org/10.1007/s10240-010-0030-y},
}

@article {KM:Alexander,
    AUTHOR = {Kronheimer, P. B. and Mrowka, T. S.},
     TITLE = {Instanton {F}loer homology and the {A}lexander polynomial},
   JOURNAL = {Algebr. Geom. Topol.},
  FJOURNAL = {Algebraic \& Geometric Topology},
    VOLUME = {10},
      YEAR = {2010},
    NUMBER = {3},
     PAGES = {1715--1738},
      ISSN = {1472-2747},
   MRCLASS = {57R58 (57M27)},
  MRNUMBER = {2683750},
MRREVIEWER = {Nikolai N. Saveliev},
       DOI = {10.2140/agt.2010.10.1715},
       URL = {https://doi.org/10.2140/agt.2010.10.1715},
}

@article {APS,
    AUTHOR = {Asaeda, Marta M. and Przytycki, J\'ozef H. and Sikora, Adam S.},
     TITLE = {Categorification of the {K}auffman bracket skein module of
              {$I$}-bundles over surfaces},
   JOURNAL = {Algebr. Geom. Topol.},
  FJOURNAL = {Algebraic \& Geometric Topology},
    VOLUME = {4},
      YEAR = {2004},
     PAGES = {1177--1210},
      ISSN = {1472-2747},
   MRCLASS = {57M27 (57M25 57R56)},
  MRNUMBER = {2113902},
MRREVIEWER = {Jacob Andrew Rasmussen},
       DOI = {10.2140/agt.2004.4.1177},
       URL = {https://doi.org/10.2140/agt.2004.4.1177}
       }

@article {Rob,
    AUTHOR = {Roberts, Lawrence P.},
     TITLE = {On knot {F}loer homology in double branched covers},
   JOURNAL = {Geom. Topol.},
  FJOURNAL = {Geometry \& Topology},
    VOLUME = {17},
      YEAR = {2013},
    NUMBER = {1},
     PAGES = {413--467},
      ISSN = {1465-3060},
   MRCLASS = {57M27 (57R58)},
  MRNUMBER = {3035332},
MRREVIEWER = {Adam M. Lowrance},
       DOI = {10.2140/gt.2013.17.413},
       URL = {https://doi.org/10.2140/gt.2013.17.413}
       }

@article{XZ:excision,
  title={Instanton {F}loer homology for sutured manifolds with tangles},
  author={Xie, Yi and Zhang, Boyu},
  journal={arXiv preprint,  arXiv:1907.00547},
  year={2019}
}

@article{XZ:forest,
  title={Classification of links with {K}hovanov homology of minimal rank},
  author={Xie, Yi and Zhang, Boyu},
  journal={arXiv preprint,  arXiv:1909.10032},
  year={2019}
}

@article{Xie-earring,
  title={Earrings, sutures and pointed links},
  author={Xie, Yi},
  journal={To appear in International Mathematics Research Notices, 
   arXiv:1809.09254},
  year={2018}
}

@article {OS:HFK,
    AUTHOR = {Ozsv\'{a}th, Peter and Szab\'{o}, Zolt\'{a}n},
     TITLE = {Holomorphic disks and knot invariants},
   JOURNAL = {Adv. Math.},
  FJOURNAL = {Advances in Mathematics},
    VOLUME = {186},
      YEAR = {2004},
    NUMBER = {1},
     PAGES = {58--116},
      ISSN = {0001-8708},
   MRCLASS = {57M27 (57R58)},
  MRNUMBER = {2065507},
MRREVIEWER = {Stanislav Jabuka},
       DOI = {10.1016/j.aim.2003.05.001},
       URL = {https://doi.org/10.1016/j.aim.2003.05.001},
}

@book {Ras:HFK,
    AUTHOR = {Rasmussen, Jacob Andrew},
     TITLE = {Floer homology and knot complements},
      NOTE = {Thesis (Ph.D.)--Harvard University},
 PUBLISHER = {ProQuest LLC, Ann Arbor, MI},
      YEAR = {2003},
     PAGES = {126},
      ISBN = {978-0496-39374-9},
   MRCLASS = {Thesis},
  MRNUMBER = {2704683},
       URL =
              {http://gateway.proquest.com/openurl?url_ver=Z39.88-2004&rft_val_fmt=info:ofi/fmt:kev:mtx:dissertation&res_dat=xri:pqdiss&rft_dat=xri:pqdiss:3091665},
}

@article{Dowlin,
  title={A spectral sequence from {K}hovanov homology to knot {F}loer homology},
  author={Dowlin, Nathan},
  journal={arXiv preprint   arXiv:1811.07848},
  year={2018}
 }

@article {feustel1966homotopic,
    AUTHOR = {Feustel, C. D.},
     TITLE = {Homotopic arcs are isotopic},
   JOURNAL = {Proc. Amer. Math. Soc.},
  FJOURNAL = {Proceedings of the American Mathematical Society},
    VOLUME = {17},
      YEAR = {1966},
     PAGES = {891--896},
      ISSN = {0002-9939},
   MRCLASS = {55.25},
  MRNUMBER = {196724},
MRREVIEWER = {Robert Freeman Brown},
       DOI = {10.2307/2036278},
       URL = {https://doi.org/10.2307/2036278},
}

@book {Hatcher,
    AUTHOR = {Hatcher, Allen},
     TITLE = {Algebraic topology},
 PUBLISHER = {Cambridge University Press, Cambridge},
      YEAR = {2002},
     PAGES = {xii+544},
      ISBN = {0-521-79160-X; 0-521-79540-0},
   MRCLASS = {55-01 (55-00)},
  MRNUMBER = {1867354},
MRREVIEWER = {Donald W. Kahn},
}

@book {Lickorish,
    AUTHOR = {Lickorish, W. B. Raymond},
     TITLE = {An introduction to knot theory},
    SERIES = {Graduate Texts in Mathematics},
    VOLUME = {175},
 PUBLISHER = {Springer-Verlag, New York},
      YEAR = {1997},
     PAGES = {x+201},
      ISBN = {0-387-98254-X},
   MRCLASS = {57M25 (57N10)},
  MRNUMBER = {1472978},
MRREVIEWER = {Darryl McCullough},
       DOI = {10.1007/978-1-4612-0691-0},
       URL = {https://doi.org/10.1007/978-1-4612-0691-0},
}

@article {Klassen,
    AUTHOR = {Klassen, Eric Paul},
     TITLE = {Representations of knot groups in {${\rm SU}(2)$}},
   JOURNAL = {Trans. Amer. Math. Soc.},
  FJOURNAL = {Transactions of the American Mathematical Society},
    VOLUME = {326},
      YEAR = {1991},
    NUMBER = {2},
     PAGES = {795--828},
      ISSN = {0002-9947},
   MRCLASS = {57M05 (57M25)},
  MRNUMBER = {1008696},
MRREVIEWER = {Paul A. Kirk},
       DOI = {10.2307/2001784},
       URL = {https://doi.org/10.2307/2001784},
}

@article{LY_multi_Alexander,
  title={Instanton {F}loer homology, sutures, and {E}uler characteristics},
  author={Li, Zhenkun and Ye, Fan},
  journal={ 
   arXiv:2101.05169},
  year={2021}
}

@article{Kirby,
  title={Problems in Low-Dimensional Topology},
  author={Kirby, Rob},
  year={1995}
}

@article {Zentner_sl2,
    AUTHOR = {Zentner, Raphael},
     TITLE = {Integer homology 3-spheres admit irreducible representations
              in {${\rm SL}(2,\mathbb{C})$}},
   JOURNAL = {Duke Math. J.},
  FJOURNAL = {Duke Mathematical Journal},
    VOLUME = {167},
      YEAR = {2018},
    NUMBER = {9},
     PAGES = {1643--1712},
      ISSN = {0012-7094},
   MRCLASS = {57M25 (57R57)},
  MRNUMBER = {3813594},
MRREVIEWER = {Tye Lidman},
       DOI = {10.1215/00127094-2018-0004},
       URL = {https://doi.org/10.1215/00127094-2018-0004},
}

@article {BS_Stein_filling,
    AUTHOR = {Baldwin, John A. and Sivek, Steven},
     TITLE = {Stein fillings and {$\rm SU(2)$} representations},
   JOURNAL = {Geom. Topol.},
  FJOURNAL = {Geometry \& Topology},
    VOLUME = {22},
      YEAR = {2018},
    NUMBER = {7},
     PAGES = {4307--4380},
      ISSN = {1465-3060},
   MRCLASS = {53D40 (53D10 57M27 57R17 57R58)},
  MRNUMBER = {3890778},
MRREVIEWER = {Paolo Ghiggini},
       DOI = {10.2140/gt.2018.22.4307},
       URL = {https://doi.org/10.2140/gt.2018.22.4307},
}

@article{SZ_SU2_abelian,
  title={A menagerie of {$\SU$}(2)-cyclic 3-manifolds},
  author={Sivek, Steven and Zentner, Raphael},
  journal={International Mathematics Research Notices, 
  https://doi.org/10.1093/imrn/rnaa330 
    },
   year={2021}
}

@article {KM_property_p,
    AUTHOR = {Kronheimer, P. B. and Mrowka, T. S.},
     TITLE = {Witten's conjecture and property {P}},
   JOURNAL = {Geom. Topol.},
  FJOURNAL = {Geometry and Topology},
    VOLUME = {8},
      YEAR = {2004},
     PAGES = {295--310},
      ISSN = {1465-3060},
   MRCLASS = {57M27 (57M25 57R17 57R57 57R58)},
  MRNUMBER = {2023280},
MRREVIEWER = {Jacob Andrew Rasmussen},
       DOI = {10.2140/gt.2004.8.295},
       URL = {https://doi.org/10.2140/gt.2004.8.295},
}

@article {KM_Dehn_surgery,
    AUTHOR = {Kronheimer, P. B. and Mrowka, T. S.},
     TITLE = {Dehn surgery, the fundamental group and {SU{$(2)$}}},
   JOURNAL = {Math. Res. Lett.},
  FJOURNAL = {Mathematical Research Letters},
    VOLUME = {11},
      YEAR = {2004},
    NUMBER = {5-6},
     PAGES = {741--754},
      ISSN = {1073-2780},
   MRCLASS = {57M25 (57N10 57R57)},
  MRNUMBER = {2106239},
MRREVIEWER = {Jacob Andrew Rasmussen},
       DOI = {10.4310/MRL.2004.v11.n6.a3},
       URL = {https://doi.org/10.4310/MRL.2004.v11.n6.a3},
}

@article{BS_L_space_surgery,
  title={Instantons and {L}-space surgeries},
  author={Baldwin, John A. and Sivek, Steven},
  journal={ 
   arXiv:1910.13374},
  year={2019}
}

@article {Zentner_SU2_simple,
    AUTHOR = {Zentner, Raphael},
     TITLE = {A class of knots with simple {$SU(2)$}-representations},
   JOURNAL = {Selecta Math. (N.S.)},
  FJOURNAL = {Selecta Mathematica. New Series},
    VOLUME = {23},
      YEAR = {2017},
    NUMBER = {3},
     PAGES = {2219--2242},
      ISSN = {1022-1824},
   MRCLASS = {57M25 (57M27 57R58)},
  MRNUMBER = {3663606},
MRREVIEWER = {Tye Lidman},
       DOI = {10.1007/s00029-017-0314-x},
       URL = {https://doi.org/10.1007/s00029-017-0314-x},
}

@article{LPZ_toroidal,
  title={Toroidal homology spheres and {$\SU$}(2)-representations},
  author={Lidman, Tye and  Pinz\'on-Caicedo, Juanita and  Zentner, Raphael},
  journal={ 
   arXiv:2101.02621 },
  year={2021}
}

@article{BS_splicing,
  title={Instanton {L}-spaces and splicing},
  author={Baldwin, John A. and Sivek, Steven},
  journal={ 
   arXiv:2103.08087},
  year={2021}
}

@article {Milnor_link_group,
    AUTHOR = {Milnor, John},
     TITLE = {Link groups},
   JOURNAL = {Ann. of Math. (2)},
  FJOURNAL = {Annals of Mathematics. Second Series},
    VOLUME = {59},
      YEAR = {1954},
     PAGES = {177--195},
      ISSN = {0003-486X},
   MRCLASS = {55.0X},
  MRNUMBER = {71020},
MRREVIEWER = {R. H. Fox},
       DOI = {10.2307/1969685},
       URL = {https://doi.org/10.2307/1969685},
}

@book {AM_Casson,
    AUTHOR = {Akbulut, Selman and McCarthy, John D.},
     TITLE = {Casson's invariant for oriented homology {$3$}-spheres},
    SERIES = {Mathematical Notes},
    VOLUME = {36},
      NOTE = {An exposition},
 PUBLISHER = {Princeton University Press, Princeton, NJ},
      YEAR = {1990},
     PAGES = {xviii+182},
      ISBN = {0-691-08563-3},
   MRCLASS = {57N10 (57M25)},
  MRNUMBER = {1030042},
MRREVIEWER = {Paul A. Kirk},
       DOI = {10.1515/9781400860623},
       URL = {https://doi.org/10.1515/9781400860623},
}

@article {CCGLS_A-polynomial,
    AUTHOR = {Cooper, D. and Culler, M. and Gillet, H. and Long, D. D. and
              Shalen, P. B.},
     TITLE = {Plane curves associated to character varieties of
              {$3$}-manifolds},
   JOURNAL = {Invent. Math.},
  FJOURNAL = {Inventiones Mathematicae},
    VOLUME = {118},
      YEAR = {1994},
    NUMBER = {1},
     PAGES = {47--84},
      ISSN = {0020-9910},
   MRCLASS = {57N10 (57M25)},
  MRNUMBER = {1288467},
MRREVIEWER = {Serge L. Tabachnikov},
       DOI = {10.1007/BF01231526},
       URL = {https://doi.org/10.1007/BF01231526},
}

@book {Kawauchi_survey,
    AUTHOR = {Kawauchi, Akio},
     TITLE = {A survey of knot theory},
      NOTE = {Translated and revised from the 1990 Japanese original by the
              author},
 PUBLISHER = {Birkh\"{a}user Verlag, Basel},
      YEAR = {1996},
     PAGES = {xxii+420},
      ISBN = {3-7643-5124-1},
   MRCLASS = {57M25},
  MRNUMBER = {1417494},
MRREVIEWER = {Sergei K. Lando},
}

@article {KM_filtration,
    AUTHOR = {Kronheimer, Peter and Mrowka, Tomasz},
     TITLE = {Filtrations on instanton homology},
   JOURNAL = {Quantum Topol.},
  FJOURNAL = {Quantum Topology},
    VOLUME = {5},
      YEAR = {2014},
    NUMBER = {1},
     PAGES = {61--97},
      ISSN = {1663-487X},
   MRCLASS = {57M27 (57M25 57R58)},
  MRNUMBER = {3176310},
MRREVIEWER = {Nikolai N. Saveliev},
       DOI = {10.4171/QT/47},
       URL = {https://doi.org/10.4171/QT/47},
}

\end{document}